\documentclass{amsart}
\usepackage{amsmath}
\usepackage{amssymb}
\usepackage[T2A,T1]{fontenc}
\usepackage[dvipsnames]{xcolor}
\usepackage{amsthm}
\usepackage{thmtools}
\usepackage{thm-restate}
\usepackage{amsfonts}
\usepackage{graphicx}
\usepackage{stackrel}
\usepackage[style=alphabetic, backend = biber]{biblatex}
\addbibresource{bib.bib}
\usepackage{wrapfig}
\usepackage{pgfplots}
\usepackage{tikz}
\usepackage{tikz-cd}
\usepackage{upgreek}
\usepackage{caption}
\usepackage{subcaption}
\usepackage{relsize}
\usepackage{mathtools}
\usepackage{adjustbox}
\usepackage{tabularx}
\usepackage[shortlabels]{enumitem}
\usepackage[hyperindex,colorlinks,breaklinks]{hyperref}
\usepackage{hyperref}
\usepackage[hyphenbreaks]{breakurl}
\usetikzlibrary{decorations.markings}
\usepackage{mathdots}

\newcommand{\G}{\Gamma}

\newcommand{\Gh}{\widehat{\G}}

\newcommand{\Gs}{\mathcal{G}}
\newcommand{\Gsc}{\overline{\mathcal{G}}}

\renewcommand{\L}{\Lambda}
\newcommand{\Lh}{\widehat{\L}}

\newcommand{\Chi}{\raisebox{0.175em}{$\upchi$}}

\newcommand{\ggx}{(\Gs,\Xi)}

\newcommand{\gxt}{(\Gs,\Xi,\Theta)}
\newcommand{\ggc}{(\Gsc,\Xi)}

\newcommand{\PP}[1][\ggx]{\Pi_1{#1}}
\newcommand{\PA}[1][\ggx]{\pi_1 #1}

\newcommand{\Cc}{\mathcal{C}}
\newcommand{\Sc}{\mathcal{S}}
\newcommand{\Ic}{\mathcal{I}}
\newcommand{\Gc}{\G_{\widehat{\Cc}}}
\newcommand{\Gsp}{\G_{\widehat{\Sc}}}
\newcommand{\fh}{\widehat{f}}
\newcommand{\yi}{\mbox{\usefont{T2A}{\rmdefault}{m}{n}\cyryi}}

\newcommand{\Z}{\mathbb{Z}}
\newcommand{\Fp}{\mathbb{F}_p}
\newcommand{\pZ}{\widehat{\Z}}
\newcommand{\at}[1]{|_{#1}}

\newcommand{\Aut}{\operatorname{Aut}}
\newcommand{\cd}{\operatorname{cd}}

\newtheorem{theorem}{Theorem}
\newtheorem{definition}{Definition}
\numberwithin{theorem}{section}
\numberwithin{definition}{section}
\numberwithin{equation}{section}
\newtheorem{lemma}[theorem]{Lemma}
\newtheorem{example}{Example}
\numberwithin{example}{section}
\newtheorem{question}{Question}
\numberwithin{question}{section}
\newtheorem{proposition}[theorem]{Proposition}

\newtheorem{corollary}[theorem]{Corollary}

\title[Cohomological Separability of GBS Groups]{Cohomological Separability of Baumslag--Solitar Groups and Their Generalisations}
\author{William D. Cohen and Julian Wykowski}
\date{\today}
\address{Department of Pure Mathematics and Mathematical Statistics, Centre for Mathematical Sciences, Wilberforce Road, Cambridge CB3 0WA}
\email{wdc26@cam.ac.uk}
\email{jw2006@cam.ac.uk}

\definecolor{imperiallight}{RGB}{24,142,179}
\definecolor{grn}{RGB}{68, 179, 24}
\definecolor{purple}{RGB}{129,39,137}
\hypersetup{
    breaklinks=true,
	linkcolor={grn},
	citecolor={grn},
	urlcolor={grn}
}
\usetikzlibrary{ decorations.markings}

\tikzcdset{
  cells={font=\everymath\expandafter{\the\everymath\displaystyle}},
}

\begin{document}
\begin{abstract}
    A group $\G$ has \emph{separable cohomology} if the profinite completion map $\iota \colon \G \to \widehat{\G}$ induces an isomorphism on cohomology with finite coefficient modules. In this article, cohomological separability is decided within the class of generalised Baumslag--Solitar groups, i.e. graphs of groups with infinite cyclic fibers. Equivalent conditions are given both explicitly in terms of the defining graph of groups and in terms of the induced topology on vertex groups. Restricted to the class of Baumslag--Solitar groups, we obtain a trichotomy of cohomological separability and cohomological dimension of the profinite completions. In particular, 
    this yields examples of non-residually-finite one-relator groups which have separable cohomology, and examples which do not.
\end{abstract}
\maketitle
\section{Introduction}
The question of profinite rigidity asks to what extent properties of an abstract group $\G$ are visible in its finite quotients. This problem can be rephrased as the question to what extent the properties of $\G$ can be detected in its profinite completion $\Gh$, the latter being a profinite group given by the inverse limit of the inverse system of the finite quotients of $\G$ and their epimorphisms. We invite the reader to \cite{Reid_Survey, Bridson_Survey} for a survey of major questions and results regarding the subject.

One such question is that of \emph{cohomological separability}, i.e. whether the profinite completion map $\iota \colon \G \to \Gh$ of an abstract group $\G$ induces an isomorphism on group cohomology with finite coefficient modules. This concept was first introduced under the name of \emph{cohomological goodness} as an exercise in \cite[Section I.2.6]{Serre_Cohomology} but has recently resurfaced as a question of research in its own right \cite{GJZ, Lorensen08, Kropholler_Wilkes, Wilton_Zalesskii}. Relating the cohomology of $\G$ to that of $\Gh$ is especially useful because it allows for the translation of difficult questions in the non-commutative realm of group theory to more approachable questions in the realm of commutative (or even linear) algebra. Recent applications range from group theory to algebraic geometry \cite{Schroer, BCR, Wilkes_SFS, Jaikin_Fiber, Wykowski}.

Typically, one restricts the study of profinite rigidity to abstract groups $\G$ which are finitely generated and residually finite. The latter assumption is necessary to ensure the weakest form of visibility of $\G$ in $\Gh$: that every non-zero element of $\G$ survives in some finite quotient thereof, or equivalently, that the profinite completion map $\iota \colon \G \to \Gh$ is an injection. The assumption of residual finiteness simplifies matters by allowing for identification of $\G$ and its subgroups with their images in $\Gh$ embedded as abstract subgroups. The question of which groups are residually finite is an active area of research itself (see \cite{Wilkes_Book}, for example).

However, there is no fundamental reason why one should consider profinite invariants only among residually finite groups. In this article, we consider the question of cohomological separability for \emph{Baumslag--Solitar groups}, a natural class of abstract groups most of which are not residually finite. Given a pair of integers $n,m \in \Z$, one defines the associated Baumslag--Solitar group as the group given by the presentation
\[
\operatorname{BS}(n,m) = \langle a,t \mid a^n = ta^mt^{-1} \rangle
\]
i.e., an HNN extension of the infinite cyclic group $\Z$ along an isomorphic subgroup included with indices $|n|$ and $|m|$, respectively. These groups are residually finite if and only if $|n| = |m|$ or one of $|n|,|m|$ is equal to one \cite[Theorem C]{Meskin}. We present the following trichotomy, which fully characterises cohomological separability and profinite cohomological dimension for Baumslag--Solitar groups in terms of their defining pairs of integers, and where we say that two numbers $n,m$ are \emph{isocratic} if for any prime $p$, either $p$ divides at most one of $n,m$, or it divides both with equal power (cf. Definition~\ref{Def::Iso}):
\begin{restatable}{theorem}{MTA}\label{Thm::MTA}
    Let $n,m \in \Z$ be integers and $\G = \operatorname{BS}(n,m)$ be the associated Baumslag--Solitar group. Exactly one of the following cases holds:
    \begin{enumerate}
        \item the numbers $n,m$ are either coprime or satisfy $|m|=|n|$, in which case $\cd(\G) = \cd(\Gh) = 2$ and $\G$ has separable cohomology;
        \item the numbers $n,m$ are isocratic but not coprime and $|n| \neq |m|$, in which case $\cd(\G) = \cd(\Gh) = 2$ but $\G$ does not have separable cohomology; and
        \item the numbers $n,m$ are not isocratic, in which case $\cd(\G) = 2$ but $\Gh$ has torsion, so $\cd(\Gh) = \infty$ and $\G$ does not have separable cohomology.
    \end{enumerate}
\end{restatable}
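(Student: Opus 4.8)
The plan is to deduce the trichotomy from two general results about generalised Baumslag--Solitar groups proved earlier in the paper --- the characterisation of when $\Gh$ has torsion (equivalently, when $\cd(\Gh)<\infty$) and the characterisation of cohomological separability in terms of the induced profinite topology on the vertex groups --- by specialising them to the one-vertex, one-loop graph of groups underlying $\operatorname{BS}(n,m)$, whose two edge-inclusion indices are $|n|$ and $|m|$. Throughout I take $nm\neq 0$. First I would record the uniform statement $\cd(\G)=2$: here $\G$ is a torsion-free one-relator group whose relator is not a proper power and which is not free, so $\cd(\G)\le 2$ by Lyndon's theorem, with equality since $\G$ is not free. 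Next the elementary observations that the three arithmetic conditions are pairwise exclusive and jointly exhaustive, and that cohomological separability forces $\cd(\Gh)=\cd(\G)=2$, since an isomorphism $H^{k}(\Gh,M)\cong H^{k}(\G,M)$ for all finite $M$ and all $k$ forces $H^{k}(\Gh,M)=0$ for $k>2$, and hence forces $\Gh$ to be torsion-free. It therefore suffices to (i) decide torsion-freeness of $\Gh$, and (ii) in the torsion-free case confirm $\cd(\Gh)=2$ and decide separability.

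For (i) I would argue prime by prime. In any finite quotient the elements $\bar a^{n}$ and $\bar a^{m}$ are conjugate, so $\min(v_{p}(|\bar a|),v_{p}(n))=\min(v_{p}(|\bar a|),v_{p}(m))$ for every prime $p$. When $(n,m)$ is isocratic this is vacuous at the primes with $v_{p}(n)=v_{p}(m)$ and forces $v_{p}(|\bar a|)=0$ at all other primes, so $\overline{\langle a\rangle}\le\Gh$ is the torsion-free procyclic group $\prod_{p\,:\,v_{p}(n)=v_{p}(m)}\Z_{p}$, and since $\Gh$ acts on a profinite tree with vertex stabilisers conjugate to it and procyclic edge stabilisers, every pro-$p$ completion of $\G$ is torsion-free. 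When $(n,m)$ is not isocratic, pick $p$ with $0<v_{p}(n)<v_{p}(m)$; then $v_{p}(n-m)=v_{p}(n)>0$, so $a$ has an image of nontrivial finite $p$-power order in $\Gh^{\mathrm{ab}}$, whence $\overline{\langle a\rangle}$, and therefore $\Gh$, contains nontrivial $p$-torsion. Thus $\Gh$ has torsion exactly when $(n,m)$ is not isocratic, in which case $\cd(\Gh)=\infty$ while $\cd(\G)=2$, so $\G$ is not separable: this is case~(3), and it reduces the remaining analysis to the isocratic situation.

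In the isocratic case I would first pin down $\Gh$: it acts on a profinite tree with vertex stabilisers conjugate to $A:=\prod_{p\,:\,v_{p}(n)=v_{p}(m)}\Z_{p}$ and procyclic edge stabilisers, so $\cd(A)\le 1$ gives $\cd(\Gh)\le 2$, and a Mayer--Vietoris computation (or the goodness established below) gives $\cd(\Gh)=2$; hence $\cd(\G)=\cd(\Gh)=2$ in cases~(1) and~(2). For separability I would feed into the general criterion the effective edge-inclusion indices $[A:\overline{\langle a^{n}\rangle}]=\prod_{p\,:\,v_{p}(n)=v_{p}(m)}p^{v_{p}(n)}$ and its analogue for $m$; a short count of $p$-adic valuations shows that each effective index is either preserved from the abstract one or collapses all the way to $1$ precisely when $|n|=|m|$ or $\gcd(n,m)=1$. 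In these two sub-cases the general separability criterion is met and $\G$ has separable cohomology --- for $|n|=|m|$ one may instead use the central extension $1\to\Z\to\operatorname{BS}(n,n)\to\Z/n\ast\Z\to 1$ with $\Z/n\ast\Z$ virtually free, together with the hereditarity of goodness, and for $\gcd(n,m)=1$ the criterion reduces to the fact that $\Gh$ is then the profinite completion of the residually finite metabelian group $\Z[1/nm]\rtimes\Z$, an extension of $\pZ$ by $A$ --- giving case~(1). In the remaining sub-case, $(n,m)$ is isocratic, not coprime, and $|n|\neq|m|$; here at least one effective edge index is a proper divisor of the abstract one that is still larger than $1$, the criterion fails, the comparison $H^{\ast}(\Gh,M)\to H^{\ast}(\G,M)$ is not an isomorphism for a suitable finite $M$, and $\G$ does not have separable cohomology: this is case~(2).

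The crux, and the step I expect to be the main obstacle, is identifying $\Gh$ correctly in the cases where $\G$ is not residually finite --- the coprime part of case~(1) and all of case~(2) --- since there the induced profinite topology on the vertex and edge groups is strictly coarser than their full profinite topology, so the standard efficiency results for graphs of groups do not apply off the shelf and one must instead determine the genuine closures $\overline{\langle a^{k}\rangle}\le\Gh$ and the profinite tree $\Gh$ really acts on; this is exactly where the $p$-adic comparison of $v_{p}(n)$ with $v_{p}(m)$ enters. Once that structure is available, the positive direction of case~(1) and the negative direction of case~(2) follow from the general separability criterion, whose proof, carried out in the body of the paper, is itself the substantive work. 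A secondary subtlety is the explicit verification that $\iota^{\ast}$ fails in case~(2): for finite modules pulled back from the abelianisation the abstract and profinite Mayer--Vietoris sequences are hard to tell apart, and one has to locate the discrepancy --- comparing, via the Lyndon--Hochschild--Serre spectral sequence of $1\to\ker\iota\to\G\to\G/\ker\iota\to 1$, the cohomology of $\G$ with that of its maximal residually finite quotient --- in a degree and with coefficients where the free group $\ker\iota$ genuinely obstructs the isomorphism.
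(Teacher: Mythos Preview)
Your broad plan---specialising the paper's cycle-case results to the single-loop graph underlying $\operatorname{BS}(n,m)$---mirrors the paper exactly: Theorem~\ref{Thm::MTA} is obtained as an immediate corollary of Theorem~\ref{Thm::MTALoopCase}. Your torsion argument for case~(3), bounding $\nu_p(|\bar a|)$ above via the conjugacy constraint $\min(\nu_p(|\bar a|),\nu_p(n))=\min(\nu_p(|\bar a|),\nu_p(m))$ and below via the abelianisation, is correct and tidier than the paper's Proposition~\ref{prop::NonIsocraticCycle}, which constructs the witnessing finite quotient by hand.

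There is, however, a genuine gap in your direct treatment of the coprime subcase of~(1). Identifying $\Gh$ with $\widehat{\Z[1/nm]\rtimes\Z}$ and observing that this metabelian group is good yields only $H^*(\Gh,M)\cong H^*(\G/\ker\iota,M)$. Since $\G$ is not residually finite here, you still owe the comparison $H^*(\G,M)\cong H^*(\G/\ker\iota,M)$, and the Lyndon--Hochschild--Serre spectral sequence you propose has an $E_2^{*,1}$-row $H^*(\G/\ker\iota,\Hom(\ker\iota,M))$ with $\ker\iota$ an infinitely generated free group; you give no reason for this row to vanish. The paper never passes through $\G/\ker\iota$: it compares the abstract and profinite Mayer--Vietoris sequences directly (Proposition~\ref{Prop::ComDiag}) and shows by an explicit determinant computation over $\Fp$ (Lemma~\ref{Lem::Vanishing}) that $H^2(\G,M)=0$ whenever $|M|$ is supported on primes dividing $nm$, which is exactly the missing ingredient for Proposition~\ref{Prop::PositiveCase}. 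Similarly, case~(2) is not settled by any abstract ``criterion failing'' or spectral-sequence discrepancy: the paper exhibits a concrete permutation module $M=\Fp^{\,q}$ for which a dimension count gives $H^2(\G,M)\neq 0$ while $H^2(\Gh,M)=0$ because $p\notin\Ic(n,m)$ (Lemma~\ref{Lem::FunnyModule}, Corollary~\ref{Cor::IsocraticNotCoprime}). Finally, your central-extension argument for $|n|=|m|$ only covers $m=n$, since $a^n$ is not central in $\operatorname{BS}(n,-n)$; the paper instead uses that $\G$ is virtually $F_k\times\Z$, hence LERF, hence efficient.
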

Notably, the first case includes all residually finite Baumslag--Solitar groups. The proof utilises the decomposition of a Baumslag--Solitar group $\G$ as a graph of groups whose vertex groups have separable cohomology. A group with such a decomposition is known to have separable cohomology, provided it is efficient (c.f. Section~\ref{sec:prelimSep}): see \cite[Proposition 3.6]{GJZ}. However, in the case where the graph of groups is not necessarily efficient, one may still relate the cohomology of $\G$ to that of $\Gh$, provided one is prepared to work with subgroups $\Delta \leq \G$ whose images in $\Gh$ might be proper quotients (if $\G$ is not residually finite) and whose closures $\overline{\iota\Delta}$ might be proper quotients of their profinite completions $\widehat{\Delta}$ (if $\G$ does not induce the full profinite topology on $\Delta$). The induced graph of profinite groups and diagram of Mayer--Vietoris sequences is established in Proposition \ref{Prop::ComDiag}. The added cost is that cohomological calculations are now more complex; these are reported on in Section~\ref{Sec::BS}. See Theorem~\ref{Thm::MTALoopCase} for the proof of (a generalised version of) Theorem~\ref{Thm::MTA}.

Noting that Baumslag--Solitar groups are one-relator groups, the above theorem yields first examples of non-residually-finite one-relator groups with separable cohomology, and examples that do not have separable cohomology.
\begin{example}
    Let $p,q \in \Z_{\geq 1}$ be coprime integers greater than 1. The Baumslag--Solitar group $\operatorname{BS}(p,q)$ is a one-relator group which is not residually finite but has separable cohomology.
\end{example}
\begin{example}
    The Baumslag--Solitar group $\operatorname{BS}(2,4)$ is a finitely generated one-relator group which does not have separable cohomology.
\end{example}
We invite the reader to compare the above examples to the following open question which has been repeatedly asked at research conferences.
\begin{question}[Folklore]\label{Quest}
    Does any finitely generated residually finite one-relator group have seaparable cohomology?
\end{question}
A natural generalisation of Baumslag--Solitar groups is the class of fundamental groups of graphs of groups with infinite cyclic vertex and edge groups. These are commonly referred to as \emph{generalised Baumslag--Solitar (GBS) groups} and have been studied in \cite{Kropholler90, Levitt07, Levitt15}. These groups are determined by the inclusion indices of edge into vertex groups, and are hence described by finite graphs with edges labelled by pairs of integers. As a generalisation of Theorem~\ref{Thm::MTA}, we decide the question of cohomological separability within this class of GBS groups. The following result characterises this property in terms of certain products of inclusion indices; we refer the reader to Section \ref{Sec::Prelim} for definitions. 
\begin{restatable}{theorem}{MTB}
\label{Thm::MTB}
    Let $\G = \PA$ be a generalised Baumslag--Solitar group over a reduced graph of groups $\ggx$. Then $\G$ has separable cohomology if and only if one of the following conditions holds:
    \begin{enumerate}
        \item The graph $\Xi$ is a cycle and the augmentation products $n(\Xi)$ and $m(\Xi)$ are coprime; or
        \item the following equivalent conditions hold:
        \begin{enumerate}
            \item for every cycle $\Upsilon \subseteq \Xi$, the equation $n(\Upsilon) = m(\Upsilon)$ holds, or
            \item the group $\G$ induces the full profinite topology on each vertex group.
        \end{enumerate}
    \end{enumerate}
\end{restatable}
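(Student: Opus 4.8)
The plan is to route everything through the Mayer--Vietoris comparison diagram of Proposition~\ref{Prop::ComDiag}. Writing $\G = \PA$ as the fundamental group of $\ggx$, one obtains a commuting ladder of long exact sequences whose rows compute $H^\ast(\G,M)$ and $H^\ast(\Gh,M)$ out of the cohomology of the vertex and edge groups and of their closures in $\Gh$. Since every vertex and edge group is infinite cyclic and $\Z$ has separable cohomology, the comparison is governed entirely by the vertical maps $H^\ast(\overline{\iota\Gs(\tau)},M)\to H^\ast(\Gs(\tau),M)$ and $H^\ast(\overline{\iota\eta},M)\to H^\ast(\eta,M)$ together with the combinatorics of how these assemble along the spanning tree and the non-tree edges. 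The first substantive step is therefore to identify each closure $\overline{\iota\Gs(\tau)}$ as a quotient of $\pZ$ in terms of the labels: propagating the cycle relations shows that, for a prime $p$, the $p$-part of $\overline{\iota\Gs(\tau)}$ is all of $\Z_p$ precisely when $p$ divides $n(\Upsilon)$ and $m(\Upsilon)$ to the same power for every cycle $\Upsilon$ through $\tau$, is a nontrivial finite quotient $\Z/p^{k}$ whenever some such cycle has $p$ dividing both $n(\Upsilon)$ and $m(\Upsilon)$ to distinct positive powers, and is trivial otherwise. This simultaneously yields the equivalence $\mathrm{(2a)}\Leftrightarrow\mathrm{(2b)}$ (fullness of the induced topology on each vertex group amounts to all $p$-parts being full, which is the balance condition on all cycles) and pins down exactly when $\Gh$ acquires torsion.

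For the ``if'' direction: if (2) holds then every closure $\overline{\iota\Gs(\tau)}$ equals $\pZ$, the graph of groups is efficient, and one either quotes \cite[Proposition 3.6]{GJZ} or equivalently observes that every vertical map in the Mayer--Vietoris ladder is an isomorphism, so the five lemma forces $\iota^\ast\colon H^\ast(\Gh,M)\to H^\ast(\G,M)$ to be an isomorphism. If instead (1) holds, $\Xi$ is a cycle and $\G$ is a generalised Baumslag--Solitar group of the loop type, and one appeals directly to Theorem~\ref{Thm::MTALoopCase}: there the coprimality of $n(\Xi)$ and $m(\Xi)$ is precisely what makes the index maps on $H^1$ and $H^2$ in the abbreviated Mayer--Vietoris sequence invertible after restriction to the (non-full) closures, so that the ladder still commutes into isomorphisms.

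For the ``only if'' direction one argues the contrapositive. Assume neither (1) nor (2); then $\mathrm{(2a)}$ fails, so there is a cycle $\Upsilon\subseteq\Xi$ with $n(\Upsilon)\ne m(\Upsilon)$. If $n(\Upsilon),m(\Upsilon)$ are not isocratic, the computation above produces a vertex closure $\overline{\iota\Gs(\tau)}$ with nontrivial torsion; being a closed subgroup of $\Gh$, this gives $\cd(\Gh)=\infty$ while $\cd(\G)=2$, so $\iota^\ast$ cannot be an isomorphism in high degrees. If $n(\Upsilon),m(\Upsilon)$ are isocratic but $\Xi=\Upsilon$, then they are not coprime (else (1) would hold), and Theorem~\ref{Thm::MTALoopCase} supplies the failure: a finite module $M$ for which the relevant index map in the Mayer--Vietoris sequence has non-isomorphic cokernels on the abstract and profinite sides, obstructing $H^2$. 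The remaining case is $\Xi\supsetneq\Upsilon$ with $\Upsilon$ isocratic and unbalanced. Here one exploits the extra combinatorial data: there is a vertex $\tau$ on $\Upsilon$ and an edge $\eta$ of $\Xi$ outside $\Upsilon$ incident to $\tau$, and since $\overline{\iota\Gs(\tau)}$ is already a proper quotient of $\pZ$, the contribution of $\eta$ to the profinite Mayer--Vietoris complex is strictly smaller than its abstract counterpart in a way the rest of the ladder cannot correct; choosing $M=\Fp$ for a prime $p$ adapted to the imbalance of $\Upsilon$ and to the label of $\eta$ at $\tau$, one shows $\iota^\ast$ fails to be an isomorphism in degree one or two.

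The main obstacle is this last case. Controlling the fine structure of the Mayer--Vietoris ladder when $\Xi$ is neither a single loop nor balanced is delicate: one must show that the partial cancellation responsible for separability of a lone coprime loop (Theorem~\ref{Thm::MTALoopCase}) is genuinely destroyed by attaching any further edge or vertex — that is, exhibit the surviving cohomology class explicitly and verify it has no preimage under $\iota^\ast$ — rather than merely noting that source and target have the same cardinality. Establishing $\mathrm{(2a)}\Leftrightarrow\mathrm{(2b)}$ cleanly (translating the balance condition on all cycles into fullness of the induced topology on every vertex group, and checking that it suffices to test one vertex per connected component) also requires care, although it reduces to the local analysis of closures described in the first paragraph.
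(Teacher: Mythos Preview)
Your overall architecture matches the paper's: route through the Mayer--Vietoris ladder of Proposition~\ref{Prop::ComDiag}, invoke Theorem~\ref{Thm::MTALoopCase} for the pure cycle case, and argue via dimension counts in degree~2 elsewhere. The ``if'' direction and the equivalence $(2a)\Leftrightarrow(2b)$ proceed essentially as in the paper (Lemmata~\ref{Lem::GoodIfFullTop} and~\ref{lem:aIffb}).

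The ``only if'' direction diverges. Your opening step --- a complete description of the $p$-part of each $\overline{\iota\Gs(\tau)}$ for general~$\Xi$ --- is more than the paper attempts; Propositions~\ref{Prop::Closure} and~\ref{prop::NonIsocraticCycle} establish this only when $\Xi$ is itself a cycle, and to justify your case~A for arbitrary~$\Xi$ you would need to extend the finite quotient constructed in part~(a) of Proposition~\ref{prop::NonIsocraticCycle} across the rest of the graph, which is not automatic. The paper sidesteps this: if $\Gh$ has torsion the conclusion is immediate, and otherwise Lemma~\ref{Lem::DifferenceIsTorsion} forces every vertex and edge closure to equal $\Z_{\widehat\Sc}$ for a single set~$\Sc$, so one picks any $q\notin\Sc$ and obtains $H^2(\Gh,M)=0$ for every $q$-primary~$M$ via Lemma~\ref{Lem::CoprimeCohomologyVanishes}.

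More seriously, your case~C needs a different idea. The paper (Lemma~\ref{lem:GoodonlyifFullTop}) does not work with an edge adjacent to the unbalanced cycle; it splits on $\operatorname{rk}_\Z H_1(\Xi)$. If the rank is at least~2 then $|E(\Xi)|>|V(\Xi)|$, and $M=\mathbb F_q$ with \emph{trivial} action already makes $\hbar$ non-surjective, so $H^2(\G,M)\neq 0$. If the rank is~1 then $\Xi$ has a leaf, and here the reducedness hypothesis is essential: the leaf inclusion index $\yi_1(e)$ exceeds~1, so one lets the leaf vertex generator act by a cyclic permutation of that order on $M=\mathbb F_q^{\,\yi_1(e)}$ while all other generators act trivially; every edge group then acts trivially but the leaf vertex group does not, again forcing $\hbar$ non-surjective. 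Your sketch never invokes reducedness, and without it the claim is false: attach an edge with both labels~1 to a coprime cycle and the fundamental group is unchanged, hence still cohomologically separable, yet your case~C would apply to it.
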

We refer the reader to Section~\ref{Sec::GBS} for the proof. The paper is structured as follows. In Section~\ref{Sec::Prelim}, we outline preliminary results regarding the cohomology of profinite groups, graphs of groups decompositions and GBS groups. In Section~\ref{Sec::Gen}, we exhibit general results relating the cohomology of a graph of groups $\G$ and its profinite completion $\Gh$, as well as direct corollaries concerning GBS groups. In Section~\ref{Sec::BS}, we consider GBS groups over a cyclic graph, culminating in a proof of Theorem~\ref{Thm::MTA}. Finally, in Section~\ref{Sec::GBS}, we generalise these results to the class of all GBS groups, concluding with a proof of Theorem~\ref{Thm::MTB}.
\section*{Acknowledgements}
The authors would like to thank their respective PhD supervisors, Jack Button and Gareth Wilkes, for plentiful valuable discussions and insights throughout the project. The authors would also like to thank Francesco Fournier-Facio and {Pawe\l} Piwek for their helpful comments. Financially, the first author is grateful for the support of the Basil Howard Research Graduate Scholarship, while the second author is grateful for support of the Cambridge International Trust and King’s College (Harold Fry Fund) Scholarship.
\section{Preliminaries}
\label{Sec::Prelim}
\subsection{Cohomology of Profinite Groups}
We first note that in this article all modules will be left modules unless stated otherwise.
Let $G$ be a profinite group. A \emph{$G$-module} is an abelian topological group $M$ equipped with a continuous $G$-action. We define the cohomology groups of $G$ with coefficients in $M$ as the direct limit
\[
H^\bullet(G,M) = \lim_{U \longrightarrow} H^\bullet(G/U,M^U)
\]
where $U \trianglelefteq_o G$ runs over all open normal subgroups of $G$. Note that given an abstract group $\Gamma$ with profinite completion map $\iota \colon \Gamma \to \Gh$, any $\Gh$-module $M$ forms a $\Gamma$-module with action via $\iota$. Conversely, if $M$ is a \emph{finite} $\Gamma$-module, the action $\Gamma \to \operatorname{Aut}(M)$ factors through a map $\Gh \to \operatorname{Aut}(M)$, making $M$ a discrete $\Gh$-module.

First cohomology groups will have special significance in this article. The first cohomology group of an abstract or profinite group is given by the set of its derivations modulo its inner derivations; see \cite[Section IV.2]{brown_cohomology} and \cite[Section 6.8]{Bible} for the abstract and profinite versions, respectively. Recall that a \emph{(left) derivation} or \emph{crossed homomorphism} of a group $\Gamma$ in a (left) $\Gamma$-module $M$ is a map $f \colon \Gamma \to M$ satisfying $f(gh) = f(g) + g f(h)$ for all $g,h \in \Gamma$. In the special case where $\G\cong \Z=\langle \omega\rangle$, we thus have that for all $\G$-modules $M$ the group $H^1(\G, M)\cong M/(\omega-1)M$ along the map $[f]\mapsto[f(\omega)]$.

While the above definition of cohomology is valid for any topological $G$-module $M$, the theory quickly develops pathologies if one considers all such modules. For this reason, one usually restricts to the category $\mathbf{DT}(G)$ of discrete torsion modules, which has enough injectives and where an alternative functional definition via homological algebra exists. We then define the \emph{cohomological dimension} of a profinite group $G$ as
\[
\cd(G) = \sup\{n \in \Z_{\geq 0} \mid \exists M \in \mathbf{DT}(G), H^n(G,M) \neq 0\}
\]
and we write $\cd(G) = \infty$ if the supremum does not exist. We remark that one may define a functional cohomlogy theory also for coefficients in the category of profinite modules, as long as the profinite group satisfies a certain finiteness condition. While this will not be necessary in the present article, we refer the curious reader to \cite{Symonds_Weigel} for details.

The following lemma permits us to separate cohomologies with coefficients in finite modules by the prime decomposition of its order. This will be useful for a characterisation of cohomological separability of (generalised) Baumslag--Solitar groups based on the prime decompositions of their inclusion indices.
\begin{lemma}\label{Lem::CoprimeCohomologyVanishes}
    Let $\Sc$ be a set of primes and $G$ a pro-$\Sc$ group. If $M$ is a finite $G$-module whose order is coprime to $\Sc$ then
    \[
        H^\bullet(G,M) = 0
    \]
    holds.
\end{lemma}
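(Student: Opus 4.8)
The plan is to reduce the assertion to a standard finite-group computation by unwinding the colimit that defines profinite cohomology, and then to annihilate cohomology in each positive degree via the classical restriction--corestriction (transfer) argument for coprime orders.

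First I would use that $M$ is finite and discrete, so that the action homomorphism $G \to \Aut(M)$ factors through $G/V$ for some open normal subgroup $V \trianglelefteq_o G$. Then $M^U = M$ for every open normal $U \subseteq V$, and such $U$ are cofinal among all open normal subgroups of $G$; restricting the defining colimit to this cofinal subsystem gives $H^\bullet(G,M) = \varinjlim_{U \subseteq V} H^\bullet(G/U, M)$, with transition maps the usual inflations. Each $Q := G/U$ is a finite continuous quotient of the pro-$\Sc$ group $G$, hence its order is a product of primes lying in $\Sc$; since by hypothesis no prime of $\Sc$ divides $|M|$, we obtain $\gcd(|Q|,|M|) = 1$. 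It therefore suffices to show that $H^n(Q,M) = 0$ for every $n \geq 1$ whenever $Q$ is a finite group, $M$ a finite $Q$-module, and $\gcd(|Q|,|M|) = 1$, since the colimit of zero groups vanishes.

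For this finite assertion I would combine two elementary observations. First, $M$ is killed by $|M|$ as an abelian group, hence so are the cochain groups $\operatorname{Map}(Q^n, M)$ and therefore $H^n(Q,M)$, in every degree. Second, for $n \geq 1$ the composite $\operatorname{cor}^Q_{\{1\}} \circ \operatorname{res}^Q_{\{1\}} \colon H^n(Q,M) \to H^n(\{1\},M) \to H^n(Q,M)$ equals multiplication by $[Q : 1] = |Q|$, while $H^n(\{1\},M) = 0$; so $|Q|$ also annihilates $H^n(Q,M)$. Choosing $a,b \in \Z$ with $a|Q| + b|M| = 1$, any class $x \in H^n(Q,M)$ with $n \geq 1$ satisfies $x = a|Q|\,x + b|M|\,x = 0$, giving $H^n(Q,M) = 0$, and hence $H^n(G,M) = 0$, for all $n \geq 1$.

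I do not anticipate a genuine obstacle: the argument is routine. The only points deserving attention are the cofinality reduction to subgroups $U$ with $M^U = M$, the observation that finite continuous quotients of a pro-$\Sc$ group are again $\Sc$-groups (so that coprimality of orders is inherited), and the bookkeeping that termwise vanishing along inflation maps forces the colimit to vanish. (The transfer step only produces vanishing in positive degrees, which is where the content of the statement lies.)
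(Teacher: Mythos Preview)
Your proposal is correct and follows essentially the same route as the paper: reduce via the defining colimit to the finite quotients $G/U$, observe these are $\Sc$-groups, and then kill $H^n$ for $n\geq 1$ by noting it is annihilated both by $|G/U|$ and by $|M|$, which are coprime. The paper simply cites the fact that $|G/U|$ annihilates $H^n(G/U,M)$, whereas you supply the restriction--corestriction argument explicitly and are slightly more careful about the cofinality step $M^U=M$; your parenthetical remark about degree $0$ is also apt, since $H^0(G,M)=M^G$ need not vanish and the lemma is only ever invoked in positive degrees.
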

\begin{proof}
    Recall that the cohomology of a profinite group $G$ with coefficients in a profinite module $M$ can be decomposed as a direct limit
    \[
    H^n(G,M) = \lim_{ U\rightarrow} H^n(G/U,M^U)
    \]
    where $n \in \Z_{\geq 0}$ and $U \trianglelefteq_o G$ ranges through the inverse system of open normal subgroups of $G$. Thus it suffices to show the result for finite groups $G$. In that case, the order of any element $[\sigma] \in H^n(G,M)$ divides the order of $G$: see e.g. \cite[Corollary 6.7.4]{Bible}. On the other hand, the order of $\sigma$ must also divide the order of $M$, as multiplication by $|M|$ annihilates all elements of $M$. But $|G|$ is an $\Sc$-group by assumption, so the order of $[\sigma]$ as a group element divides $\gcd(|G|,|M|) = 1$ and $H^n(G,M)$ must be trivial.
\end{proof}
\subsection{Abstract and Profinite Graphs of Groups}
We will assume the reader has some familiarity with Bass-Serre theory, and for a more detailed discussion we refer to \cite{Serre, dicks2011groups}. Let $\Xi$ be a graph. For an edge $e\in E(\Xi)$ we denote by $d_0(e)$ and $d_1(e)$ the initial and terminal vertices of $e$ respectively.

A \emph{graph of groups} $(\Gs, \Xi)$ over a finite connected graph $\Xi$ consists of:
\begin{enumerate}[(a)]
    \item a collection of abstract groups $\{\Gs(x) : x \in \Xi\}$, referred to as the \emph{vertex} and \emph{edge groups}, or collectively as the \emph{fibres}; and
    \item two collections of monomorphisms of groups $\{\partial_{e, 0} \colon \Gs(e) \to \Gs(d_0(e))\}_{e \in E(\Xi)}$ and $\{\partial_{e, 1} \colon \Gs(e) \to \Gs(d_1(e))\}_{e \in E(\Xi)}$, referred to as the \textit{edge inclusions}.
\end{enumerate}
Let $(\Gs, \Xi)$ be a graph of groups. We define the fundamental group $\PA$ of $(\Gs, \Xi)$ as in \cite[Section~I.5.1]{Serre}.

Now, we outline the analogous definitions in the profinite category. A \textit{profinite graph} $X$ is a compact, Hausdorff and totally disconnected space equipped with a distinguished closed subspace $V(X) \subseteq X$, called the \emph{vertex set}, as well as two continuous maps $d_0, d_1 \colon X \to V(X)$ satisfying $d_0\at{V(X)} = d_1\at{V(X)} = \operatorname{id}_{V(X)}$.
A \emph{graph of profinite groups} $\ggx$ over a finite connected graph $\Xi$ consists of:
\begin{enumerate}[(a)]
    \item a collection of profinite groups $\{\Gs(x) : x \in \Xi\}$, referred to as the \emph{fibres}; and
    \item two collections of continuous injective homomorphisms of profinite groups $\{\partial_{e, 0} \colon \Gs(e) \to \Gs(d_0(e))\}_{e \in E(\Xi)}$ and $\{\partial_{e, 1} \colon \Gs(e) \to \Gs(d_1(e))\}_{e \in E(\Xi)}$, referred to as the \textit{edge inclusions}.
\end{enumerate} 
Given a graph of profinite groups $\ggx$, we want to associate to it a fundamental group as in the abstract case, which will be a group generated by the vertex groups and a collection of stable letters $T = \{t_e : e \in E(\Xi)\}$, indexed by the edges of $\Xi$, which are glued together with respect to the structure of $\Xi$. Specifically, choose a spanning tree $\Theta$ of $\Xi$ and consider the profinite free product
\[
W \ggx =  \coprod_{v \in V(\Xi)} \Gs(\tau) \amalg \pZ[[T]]
\]
where $\pZ[[T]]$ denotes the free profinite group on the space $T$. Let $N \gxt \trianglelefteq W \ggx$ be the minimal closed normal subgroup containing the sets $\{t_e : e \in E(\Theta)\}$ and $\{\partial_1^{-1}(g)t_e^{-1}\partial_0(g)t_e : e \in E(\Xi), g \in \Gs(e)\}$. The \emph{profinite fundamental group of $\ggx$ with respect to $\Theta$} is defined as
\[
\PP[(\Gs, \Xi, \Theta)] = \frac{W\ggx}{N\gxt}
\]
which is profinite as $N\ggx$ is closed in $W\ggx$. Given a different choice of spanning tree $\Theta'$ of $\Xi$, there is an isomorphism
\begin{equation}
	\PP[(\Gs, \Xi, \Theta)] \cong \PP[(\Gs, \Xi, \Theta')]
\end{equation}
although this isomorphism may carry vertex groups to distinct conjugates. Thus, we shall write $\PP$ whenever we refer to the isomorphism type of this group, and specify a spanning tree only when we need to refer to the images of vertex groups in the fundamental group. Moreover, to distinguish this construction from the abstract case, we shall write $\PA$ and $\PP$ to denote the abstract and profinite fundamental groups of a graph of groups $\ggx$, respectively. Unlike in the abstract setting, the canonical morphisms $\Gs(x) \to \PP$ may not be injective; if they are indeed monomorphisms, we say that $\gxt$ is an \emph{injective} graph of profinite groups. We note that a graph of profinite groups which is not injective can be replaced with a natural construction which does form an injective graph of profinite groups: see \cite[Section 6.4]{Ribes_Graphs}.

A \emph{generalised Baumslag--Solitar (GBS) group} is the fundamental group of a graph of groups $\G = \PA$ over a finite graph $\Xi$, whose vertex and edge groups are all isomorphic to the infinite cyclic group $\Z$. In this case, we fix isomorphisms $\vartheta_x \colon \Gs(x) \to \Z$ for $x \in \Xi$, hereinafter referred to as the ``canonical isomorphisms''. As inclusions $\Z \hookrightarrow \Z$ are in bijective correspondence with $\Z$, a GBS group over a graph $\Xi$ is determined by a function assigning edges to pairs of vertices (one each for the initial and terminal vertex inclusions). For the purpose of separable cohomology, it suffices to work with the images of these inclusions, for which reason we may drop the sign and define the \emph{inclusion indices} as the functions
\[
\yi_0, \yi_1 \colon E(\Xi) \to \Z
\]
given by
\[\yi_j(e) = [\Gs(d_j(e)) : \Gs(e)]\] for $e \in E(\Xi)$ and $j \in \{0,1\}$. The characterisation of separable cohomology of GBS groups will then be given in terms of the products of the $\yi$-values along a certain subgraphs. For this reason, we define
\[
n(U) = \prod_{e\in U} \yi_0(e) \quad \text{and} \quad m(U) = \prod_{e\in U} \yi_1(e)
\]
whereto we refer as the \emph{augmentation products} of a subset $U \subseteq E(\Xi)$. If $\Xi$ is a cycle we will always calculate $m(\Xi)$ and $n(\Xi)$ as though all edges of $\Xi$ are oriented as in Figure~\ref{Fig:HEXAGON}.

Let $\G$ be a generalised Baumslag--Solitar group over a graph $\Xi$, and let $\Theta$ be the intersection of every spanning tree of $\Xi$. If there exists some edge $e\in E(\Theta)$ such that either $\yi_0(e)=1$ or $\yi_1(e)=1$ then we may collapse $e$ in $\Xi$ without changing the isomorphism type of $\G$. We say that a generalised Baumslag--Solitar group over a graph with no such edges is over a \emph{reduced} graph. 

Similarly, we may add a new vertex $v'$ to $\Xi$ in the middle of any edge with both monomorphisms into $\Gs(v')$ given by the identity map. In this way we may assume that every edge in $\Xi$ has two distinct endpoints whenever convenient, but this construction will not result in a reduced graph of groups in general.

If $\Xi$ is a cycle graph then we say that $\G$ is a generalised Baumslag--Solitar (GBS) group \emph{over a cycle}. In this case, we shall label the graph $\Xi$ as $V(\Xi) = \{v_1, \ldots, v_s\}$ satisfying $d_0(e_i) = v_i$ and $d_1(e_i) = v_{i+1}$ for $1 \leq i < s$. While irrelevant to the statement of the result, during calculations it will occasionally be convenient to refer to the image of the generator under the inclusion map with the proper sign. As such, we shall write $n_i$ and $m_i$ for the images of the generator $1\in \Z$ under the maps
\[\partial_{e_i,j}\circ\vartheta^{-1}_{e_i}\colon\Z\rightarrow \Gs(d_j(e_i))\]
for $j=0$ and $j=1$ respectively. This situation is illustrated in Figure~\ref{Fig:HEXAGON}.
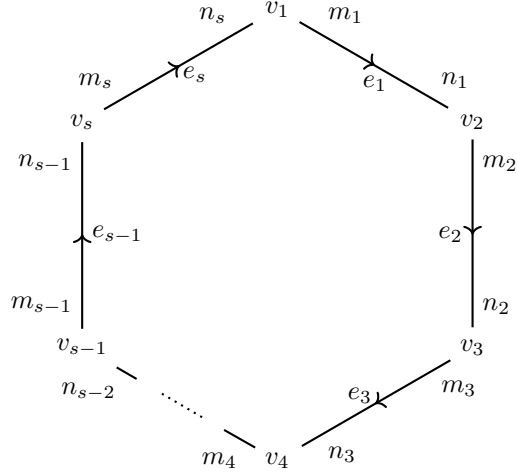
\begin{figure}
    \def\scale{1.5}
        \centering
    \begin{tikzpicture}[->,shorten >=1pt,auto,node distance=3cm,
            thick]

        \node (a1) at (0, 0) {$v_1$};
        \node (a2) at (1.73*\scale, -1*\scale) {$v_2$};
        \node (a3) at (1.73*\scale, -3*\scale) {$v_3$};
        \node (a4) at (0, -4*\scale) {$v_4$};
        \node (as-1) at (-1.73*\scale, -3*\scale) {$v_{s-1}$};
        \node (as) at (-1.73*\scale, -1*\scale) {$v_{s}$};

        \node (mid1) at ( -1.16*\scale,-3.33*\scale) {};
        \node (mid2) at ( -0.58*\scale,-3.67*\scale) {};
         \begin{scope}[decoration={
                        markings,
                        mark=at position 0.5 with {\arrow{>}}}
                      ]    
            \draw[-, postaction={decorate}] (a1) -- (a2)
                node[midway, below] {$e_1$}
                node[very near start] {$m_1$}
                node[very near end] {$n_1$};
            \draw[-, postaction={decorate}] (a2) -- (a3)
                node[midway, left] {$e_2$}
                node[very near start] {$m_2$}
                node[very near end] {$n_2$};
            \draw[-, postaction={decorate}] (a3) -- (a4)
                node[pos=0.5+0.15/\scale, above] {$e_3$}
                node[very near start] {$m_3$}
                node[very near end] {$n_3$};
            \draw[-] (a4) -- (mid2)
                node[near start] {$m_4$};
            \draw[-, dotted]  (mid2) -- (mid1) {};
            \draw[-] (mid1) -- (as-1)
                node[midway] {$n_{s-2}$};
            \draw[-, postaction={decorate}] (as-1) -- (as)
                node[midway, right] {$e_{s-1}$}
                node[very near start] {$m_{s-1}$}
                node[very near end] {$n_{s-1}$}; 
            \draw[-, postaction={decorate}] (as) -- (a1)
                node[pos=0.5+0.15/\scale, below] {$e_{s}$}
                node[very near start] {$m_{s}$}
                node[very near end] {$n_{s}$};
        \end{scope}
    \end{tikzpicture}
    \caption{\label{Fig:HEXAGON}An illustration of the standard layout of a GBS group over graph with a single cycle.}
\end{figure}

Choosing the spanning tree $\Theta = \Xi - \{e_s\}$ for $\Xi$ and labelling the generator of $\Gs(v_i)$ as $a_i$, we obtain the canonical presentation
\begin{equation}\label{Eq::CanonicalPresentation}
    \G \cong \frac{\big\langle a_1, \ldots, a_s, t \big\rangle}{\big \langle \big \langle \{a_i^{n(e_i)}a_{i-1}^{-m(e_i)} : i\neq s\} \cup \{ta_s^{n_s}t^{-1}a_1^{-m_s}\} \big \rangle \big \rangle}
\end{equation}
for $\Gamma$, where $t$ is the stable letter of the HNN extension corresponding to the cycle.

Given an integer $x$ and a prime number $p$, write $\nu_p(x)$ for the $p$-adic valuation $\nu_p(x) = \max\{k \in \mathbb{Z} \mid p^k \text{ divides } x \}$. For the characterisation of the cohomological dimension of the profinite completion of a GBS group, we shall consider differences in the $p$-adic valuation of primes dividing the augmentation products. We make the following definition.
\begin{definition}\label{Def::Iso}
    Two integers $n,m$ are \emph{isocratic} if for every prime number $p$,
    \[
        \nu_p(n) \cdot \nu_p(m) \neq 0 \Rightarrow \nu_p(n) =\nu_p(m)
    \]
i.e., either $p$ divides at most one of $n$ or $m$, or it divides both with equal power.
\end{definition}
In Theorem~\ref{Thm::MTA}, the cohomological dimension of the profinite completion of a GBS group over a cycle is shown to be finite if and only if the augmentation products corresponding to the cycle are isocratic.

Now, let $\G = \PA$ be a generalised Baumslag--Solitar group over any graph $\Xi$. Given a subtree $\Theta$ of $\Xi$, a vertex $w\in \Theta$ and a prime number $p$, we may define the function $\varepsilon_{p, w} \colon V(\Theta) \to \Z$ as
\begin{equation}\label{Eq::Epsilon}
\varepsilon_{p, w}(v) = \begin{cases} \sum_{e\in E([v, w])} \nu_p(\yi_0(e)) - \nu_p(\yi_1(e)), & v\neq w \\
0 & v=w\end{cases}
\end{equation}
where for this calculation we reorient every edge of $\Theta$ away from $w$ and where $[v, w]$ is the unique geodesic joining $v$ to $w$ in $\Theta$. For a fixed spanning tree $\Theta$, the function $\varepsilon_{p, w}$ will depend on the choice of vertex $w$ only up to a constant, so we will omit $w$ from the notation whenever unnecessary.

However, the function $\varepsilon_{p}$ can vary wildly with our choice of spanning tree. For example, let $\Xi$ be a cycle. Pick some edge $e\in E(\Xi)$, and define $\Theta=\Xi-e$. The constants $n(\Xi),m(\Xi)$ are isocratic if and only if for every prime $p$ that divides both $n(\Xi)$ and $m(\Xi)$, the equation
\begin{equation}\label{eq:epsilonFormula}\varepsilon_{p}(v_{i})-\varepsilon_{p}(v_{i+i}) = \nu_p(\yi_1(e_i)) - \nu_p(\yi_0(e_i))\end{equation} holds for all $1\leq i\leq s$, where indices are added modulo $s$ and using for this calculation the standard orientation of edges clockwise around the cycle. Aditionally, (\ref{eq:epsilonFormula}) will always hold for a prime $p$ that does not divide the product $n(\Xi)m(\Xi)$, in which case $\varepsilon_p\equiv 0$. If $m(\Xi)$ and $n(\Xi)$ are isocratic and $p$ divides either both or neither of $n(\Xi)$ or $m(\Xi)$, the function $\varepsilon_p$ will depend on the choice of spanning tree only up to a constant. However, (\ref{eq:epsilonFormula}) can never hold for a prime $p$ that divides exactly one of $n(\Xi)$ and $m(\Xi)$ or divides them in different powers, and so in this case $\varepsilon_p$ will depend heavily on our choice of spanning tree. A simple example of such a case is the function $\varepsilon_2$ for the GBS group $\G$ over a $2$-cycle with $m_1= m_2= n_1=3$ and $n_2=2$.

The main use of the $\varepsilon_{p, w}$ function is to keep track of orders of images of generators of $\G$ in finite quotients. For example, let $\G$ be a GBS group over an isocratic cycle $\Xi$ with some prime $p$ dividing both $m(\Xi)$ and $n(\Xi)$, and $w\in V(\Xi)$. Assume that $\phi\colon \G\rightarrow Q$ is a finite quotient of $\Gamma$. If some vertex $v_i\in V(\Xi)$ minimises $\varepsilon_{p, w}$, and the order $|\phi(a_i)|$ of the image of $a_i$ in $Q$ is a power $p^k$ of $p$ such that $k> \varepsilon_{p, w}(v_j)-\varepsilon_{p, w}(v_i)$ for all $1\leq j\leq s$, then
\begin{equation}\label{Eq::Order}
    \nu_p(|\phi(a_j)|)=k+\varepsilon_{p, w}(v_i)-\varepsilon_{p, w}(v_j)
\end{equation}
for any vertex generator $a_j$ of $\G$ corresponding to a vertex $v_j$, as in (\ref{Eq::CanonicalPresentation}).

From the perspective of cohomology, the major advantage of a graph of groups decomposition $\G = \PA$ is the induced long exact sequence on cohomology groups, which allows one to relate the cohomology of $\G$ to the cohomology of fibers. This is known as the Mayer--Vietoris sequence, and was established (for abstract groups) in \cite[Theorem 2]{Chiswell}. We quote it below, with the addition of an explicit formula for one of the maps involved: this may be obtained by unpacking explicit forms for the involved chain maps and the Shapiro isomorphism. The corresponding statement in the profinite category is proven analogously (to appear as \cite[Theorem 8.4.6]{Wilkes_Book}, see also \cite[Theorem 9.4.1]{Ribes_Graphs} for the dual statement).
\begin{proposition}\label{Prop::Mayer-Vietoris}
    Let $\ggx$ be a discrete or profinite graph of groups over a finite graph $\Xi$ and $G$ its fundamental group, i.e. $G = \PA$ or $G = \PP$, respectively. For any discrete $G$-module $M$, there is a long exact sequence
    \[
    \ldots \to H^n(G,M) \xrightarrow{i^*} \bigoplus_{v \in V(\Xi)} H^n(\Gs(v),M) \xrightarrow{\hbar} \bigoplus_{e \in E(\Xi)} H^n(\Gs(e),M)  \xrightarrow{\delta} H^{n+1}(G,M) \to \ldots
    \]
    where $i^* \colon \bigoplus_{v \in V(\Xi)} H^n(\Gs(v),M) \to G$ is induced by the inclusions of vertex groups,
    \[\hbar\left((f_v)_{v \in V(\Xi)} \right) \colon \left((x_e)_{e \in E(\Xi)} \right) \mapsto \left(f_{d_1(e)}(\partial_{e, 1}(x_e)) - t_e \cdot f_{d_{e, 0}(e)}(\partial_0(x_e)) \right)\] and $\delta$ is a boundary homomorphism.
\end{proposition}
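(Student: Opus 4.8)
The plan is to deduce the long exact sequence, uniformly in the abstract and profinite settings, from the Bass--Serre tree on which $G$ acts, and then to read off the formula for $\hbar$ by unwinding Shapiro's lemma. First I would recall that $G$ — whether $\PA$ or $\PP$ — acts without inversion on its (abstract, respectively profinite) Bass--Serre tree $T$, with quotient graph $\Xi$ and with the stabiliser $G_{\widetilde x}$ of a chosen lift $\widetilde x$ of a vertex or edge $x \in \Xi$ equal to the image of the fibre $\Gs(x)$ in $G$ (which, in the cases relevant to this paper, is a copy of $\Gs(x)$). In the profinite case the existence of $T$ as a profinite tree with these stabilisers is part of the theory developed in \cite{Ribes_Graphs}.

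The engine of the argument is the augmented simplicial chain complex of $T$,
\[
0 \longrightarrow C_1(T) \xrightarrow{\ \partial\ } C_0(T) \longrightarrow \Z \longrightarrow 0,
\]
which is a short exact sequence of $\Z G$-modules (respectively of $\pZ[[G]]$-modules) precisely because $T$ is a tree, hence contractible (respectively a profinite tree in the sense of \cite{Ribes_Graphs}). Fixing a lift $\widetilde x$ of each $x \in \Xi$ and, for the edges outside the chosen spanning tree $\Theta$, the associated stable letters, one obtains $G$-module identifications
\[
C_0(T) \cong \bigoplus_{v \in V(\Xi)} \Z[G/G_{\widetilde v}], \qquad C_1(T) \cong \bigoplus_{e \in E(\Xi)} \Z[G/G_{\widetilde e}]
\]
(with no orientation twist, since the action is without inversion), under which $\partial(\widetilde e) = \widetilde{d_1(e)} - g_e\cdot\widetilde{d_0(e)}$, where $g_e = 1$ for $e \in E(\Theta)$ (as forced by the relations defining $G$) and $g_e$ is the associated stable letter $t_e$, possibly inverted according to the sign conventions fixed for the fundamental group, otherwise.

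Next I would apply the long exact $\operatorname{Ext}^\bullet_{\Z G}(-, M)$-sequence (respectively its profinite analogue; see \cite[Theorem 8.4.6]{Wilkes_Book} and, for the dual homological statement, \cite[Theorem 9.4.1]{Ribes_Graphs}) to the displayed short exact sequence. Its three recurring terms are $\operatorname{Ext}^n_{\Z G}(\Z, M) = H^n(G, M)$, $\operatorname{Ext}^n_{\Z G}(C_0(T), M)$ and $\operatorname{Ext}^n_{\Z G}(C_1(T), M)$; by Shapiro's lemma $\operatorname{Ext}^n_{\Z G}(\Z[G/G_{\widetilde x}], M) \cong H^n(G_{\widetilde x}, M) \cong H^n(\Gs(x), M)$ for every fibre, and finiteness of $\Xi$ converts the resulting products into the displayed finite direct sums, with the map out of $H^n(G,M)$ becoming restriction along the vertex inclusions. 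This establishes the sequence; for the abstract case one may alternatively just invoke \cite[Theorem 2]{Chiswell}.

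It remains to make $\hbar$ explicit. By construction it is the map $\partial^\ast$ induced on $\operatorname{Ext}^\bullet$ by $\partial$, transported through the Shapiro isomorphisms on each side, the latter sending a cochain $f_v$ on $G_{\widetilde v}$ — identified with a cochain on $\Gs(d_j(e))$ via the fibre inclusion — to its value on the relevant coset representative. Feeding the formula $\partial(\widetilde e) = \widetilde{d_1(e)} - g_e\cdot\widetilde{d_0(e)}$ through this dictionary gives, at cochain level, the assignment $(f_v)_v \mapsto \big( f_{d_1(e)}\circ\partial_{e,1} - g_e\cdot(f_{d_0(e)}\circ\partial_{e,0}) \big)_e$, i.e. the stated formula with $g_e = t_e$. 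The one step that genuinely requires care — and the main obstacle to a clean write-up — is precisely this last piece of bookkeeping: keeping track of signs, the chosen orientations of the edges, and the exact position and direction of the stable letters $t_e$ as they pass through the Shapiro isomorphism. Everything preceding it is formal once the tree and its chain complex are in hand.
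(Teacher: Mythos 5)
Your argument is correct and is precisely the route the paper takes: the paper does not prove this proposition but quotes it from Chiswell (abstract case) and the cited profinite references, remarking only that the formula for $\hbar$ follows ``by unpacking explicit forms for the involved chain maps and the Shapiro isomorphism'' --- which is exactly the Bass--Serre tree / chain complex / Shapiro computation you carry out. Your treatment of the stable letters $t_e$ (trivial on the spanning tree, nontrivial otherwise) matches the stated formula.
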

\begin{corollary} \label{Cor::CohomDim}
    Let $\G$ be the fundamental group of a graph of free groups. Then $\G$ has cohomological dimension at most $2$.
\end{corollary}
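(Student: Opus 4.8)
The plan is to read the result straight off the Mayer--Vietoris long exact sequence of Proposition~\ref{Prop::Mayer-Vietoris}, combined with the classical fact that a free group---abstract or profinite---has cohomological dimension at most $1$ over any coefficient module. So the only ingredients are this bound on the fibres and exactness of the sequence; there is essentially no further input required.

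Concretely, write $\G = \PA$ for a graph of groups $\ggx$ in which every fibre $\Gs(x)$ is free, and let $M$ be a discrete $\G$-module (for the purpose of bounding $\cd(\G)$ one may take $M \in \mathbf{DT}(\G)$, but this restriction is not needed). Fix $n \geq 3$. I would extract from Proposition~\ref{Prop::Mayer-Vietoris} the segment
\[
\bigoplus_{e \in E(\Xi)} H^{n-1}(\Gs(e),M) \xrightarrow{\ \delta\ } H^n(\G,M) \xrightarrow{\ i^*\ } \bigoplus_{v \in V(\Xi)} H^n(\Gs(v),M).
\]
Since $n-1 \geq 2$ and $n \geq 2$, and each $\Gs(e)$ and $\Gs(v)$ is free, both outer terms vanish by the bound $\cd(F) \leq 1$ for free $F$; exactness then forces $H^n(\G,M) = 0$. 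As $M$ and $n \geq 3$ were arbitrary, $\cd(\G) \leq 2$. (Note the argument stops at $n=2$, as it must, since $H^1(\Gs(e),M)$ need not vanish.)

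I expect no serious obstacle: the work is confined to checking that the hypotheses of Proposition~\ref{Prop::Mayer-Vietoris} apply. Two mild points deserve a word. First, one needs every \emph{edge} group free; this is part of the standing meaning of ``graph of free groups'', but in the abstract case it would in any event follow from the Nielsen--Schreier theorem once the vertex groups are free. Second, Proposition~\ref{Prop::Mayer-Vietoris} is stated over a finite graph, so if one wishes to allow an infinite underlying graph $\Xi$ one should first apply the sequence to the finite subgraphs of $\Xi$ and then pass to the colimit, using that group cohomology with these coefficients commutes with the resulting direct limit of fundamental groups; this reduces the general case to the finite one. With these remarks in place the proof is immediate.
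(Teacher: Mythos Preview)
Your argument is correct and is precisely the intended one: the paper states the result as an immediate corollary of Proposition~\ref{Prop::Mayer-Vietoris} without giving a separate proof, and the Mayer--Vietoris segment you extract, together with $\cd(F)\leq 1$ for free $F$, is exactly the implicit reasoning. Your side remarks on edge groups and infinite graphs are fine but unnecessary in this paper's context, where all underlying graphs are finite.
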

\subsection{Cohomological Separability}\label{sec:prelimSep}
Let $\G$ be a group and $\iota \colon \G \to \Gh$ be its profinite completion. We say that $\G$ has \emph{separable cohomology in dimension $n$} if the induced map
\[
\iota^* \colon H^n(\Gh, M) \to H^n(\G, M)
\]
is an isomorphism for all finite $\G$-modules $M$. We say that $\G$ has \emph{separable cohomology} if it has separable cohomology in all dimensions. This concept---more commonly known as \emph{cohomological goodness} or \emph{goodness in the sense of Serre}---was first introduced by Serre as an exercise in \cite[Section I.2.6]{Serre_Cohomology}. In the interest of descriptiveness, we opt instead for the terminology of ``cohomological separability'', first suggested by Gareth Wilkes in \cite{Wilkes_Book}.
We include the following two properties first observed by Serre in the aforementioned book, which shall be pertinent to the present article:
\begin{lemma}[\cite{Serre_Cohomology}]\label{Lem::DimOne}
    All groups have separable cohomology in dimension 1.
\end{lemma}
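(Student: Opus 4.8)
The plan is to unwind both $H^1$'s into crossed homomorphisms modulo principal ones and play off the density of $\iota(\G)$ in $\Gh$ against the universal property of the profinite completion. Fix a finite $\G$-module $M$. Since $M$ is finite, the action $\G \to \Aut(M)$ factors through a finite quotient $q\colon \G \to Q$, and $M$ becomes a discrete $\Gh$-module via the induced continuous surjection $\Gh \to Q$, so $H^1(\Gh,M)$ is computed by \emph{continuous} crossed homomorphisms $\Gh \to M$ modulo principal ones, and $\iota^*$ sends the class of $f\colon \Gh \to M$ to the class of $f\circ\iota$. For injectivity, suppose $f\colon \Gh \to M$ is a continuous crossed homomorphism with $f\circ\iota$ principal, say $f(\iota(g)) = gm - m$ for a fixed $m\in M$ and all $g\in\G$ (using the left-derivation convention of the excerpt). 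The map $\Gh \to M$, $x \mapsto xm - m$, is continuous --- indeed locally constant, as $M$ is discrete with continuous $\Gh$-action --- and agrees with $f$ on the dense subset $\iota(\G)$; since $M$ is Hausdorff, $f(x) = xm - m$ for all $x \in \Gh$, so $f$ is principal and $[f]=0$. Hence $\iota^*$ is injective.

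For surjectivity, let $d\colon\G \to M$ be a crossed homomorphism and form the \emph{finite} group $M \rtimes Q$. The map $\psi\colon\G \to M\rtimes Q$, $\psi(g) = (d(g), q(g))$, is a group homomorphism: expanding $\psi(g_1g_2)$ and $\psi(g_1)\psi(g_2)$ reduces exactly to the identity $d(g_1g_2) = d(g_1) + g_1 d(g_2)$, using that $g_1$ acts on $M$ through $q(g_1)$. As $M\rtimes Q$ is finite, $\psi$ extends uniquely to a continuous homomorphism $\widehat{\psi}\colon\Gh \to M\rtimes Q$; write $\widehat{\psi}(x) = (\bar d(x), \beta(x))$. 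The homomorphism property forces $\beta\colon\Gh \to Q$ to be a homomorphism and $\bar d(xy) = \bar d(x) + \beta(x)\cdot\bar d(y)$, and $\beta$ agrees with $q$ on $\iota(\G)$, so $\beta$ is precisely the surjection $\Gh\to Q$ defining the $\Gh$-module structure on $M$. Thus $\bar d\colon\Gh\to M$ is a continuous crossed homomorphism with $\bar d\circ\iota = d$, giving $\iota^*[\bar d] = [d]$; so $\iota^*$ is surjective, and hence an isomorphism.

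The argument is essentially formal --- this is the content of Serre's original exercise --- so there is no serious obstacle; the one point that genuinely requires attention is the identification in the surjectivity step, namely checking that the second component $\beta$ of the extended homomorphism $\widehat{\psi}$ is the \emph{correct} $\Gh$-action on $M$, so that $\bar d$ is a crossed homomorphism for exactly the module structure used to define $H^1(\Gh,M)$. That, too, follows from density of $\iota(\G)$ in $\Gh$. One should also keep the left/right derivation conventions consistent with the excerpt throughout; beyond that the proof is bookkeeping.
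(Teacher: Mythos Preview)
Your proof is correct and is the standard argument; the paper itself does not supply a proof of this lemma, merely citing Serre. It is worth noting, though, that the paper later proves a pro-$\Cc$ generalisation of the surjectivity half (the unnumbered lemma immediately preceding Corollary~\ref{Cor::ContDer}) using exactly your semidirect-product trick $\G \to M \rtimes \Gc$, so your approach aligns with the paper's methods.
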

\begin{lemma}[\cite{Serre_Cohomology}]\label{Lem::Surjections}
    If $\iota^* \colon H^n(\Gh, M) \to H^n(\G, M)$ is an epimorphism for all finite $\G$-modules $M$ and $n \in \Z_{\geq 0}$ then it is an isomorphism for all finite $\G$-modules $M$ and $n \in \Z_{\geq 0}$, i.e. $\G$ has separable cohomology.
\end{lemma}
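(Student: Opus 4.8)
The plan is to reduce the statement to injectivity and then run a standard dimension‑shifting induction. Since $\iota^*$ is already surjective in every degree by hypothesis, it suffices to prove that $\iota^* \colon H^n(\Gh, M) \to H^n(\G, M)$ is \emph{injective} for every $n$ and every finite $\G$-module $M$, and I would prove this by induction on $n$. The case $n=0$ is immediate: $H^0(\Gh, M) = M^{\Gh} = M^{\G} = H^0(\G, M)$, because $M$ is discrete and $\iota(\G)$ is dense in $\Gh$. The case $n=1$ is Lemma~\ref{Lem::DimOne}. So fix $n \geq 2$ and assume $\iota^*$ is an isomorphism in all degrees $<n$ and all finite modules.

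For the inductive step, let $M$ be a finite $\G$-module and embed it into the coinduced module $N = \operatorname{CoInd}_1^{\Gh}(M)$ of locally constant functions $\Gh \to M$ (via the adjunction unit): this lies in $\mathbf{DT}(\Gh)$, contains $M$ as a submodule, and is acyclic as a $\Gh$-module, so $H^q(\Gh, N) = 0$ for all $q \geq 1$. Put $L = N/M \in \mathbf{DT}(\Gh)$ and take the long exact cohomology sequences associated to $0 \to M \to N \to L \to 0$ over $\Gh$ and over $\G$; these fit into a commuting ladder with vertical maps $\iota^*$. Since $n \geq 2$ we have $H^{n-1}(\Gh, N) = H^n(\Gh, N) = 0$, so the connecting map $\partial\colon H^{n-1}(\Gh, L) \to H^n(\Gh, M)$ is an isomorphism and the map $H^{n-1}(\Gh, N) \to H^{n-1}(\Gh, L)$ is zero. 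Because $\iota^*$ is (at least) surjective in degree $n-1$ on $N$, chasing the ladder shows $H^{n-1}(\G, N) \to H^{n-1}(\G, L)$ is also zero, whence exactness of the abstract sequence forces the abstract connecting map $\partial'\colon H^{n-1}(\G, L) \to H^n(\G, M)$ to be injective. Now take $x \in H^n(\Gh, M)$ with $\iota^* x = 0$ and write $x = \partial y$ with $y \in H^{n-1}(\Gh, L)$; commutativity of the ladder gives $\partial'(\iota^* y) = \iota^*(\partial y) = \iota^* x = 0$, so $\iota^* y = 0$ by injectivity of $\partial'$, so $y = 0$ by the inductive hypothesis applied to $L$ in degree $n-1$, and therefore $x = 0$.

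The step I expect to be the crux is the last use of the inductive hypothesis: it is invoked for the modules $N$ and $L$, which are discrete torsion but in general infinite, whereas the hypothesis as stated concerns only finite modules. I would close this gap by writing $N$ and $L$ as filtered colimits of their finite submodules and observing that $H^{n-1}(\Gh,-)$ and $H^{n-1}(\G,-)$ commute with such colimits — automatic on the profinite side, and valid on the abstract side for the GBS groups (and their finite‑index subgroups) to which the lemma is applied, since by Corollary~\ref{Cor::CohomDim} these are finitely presented of cohomological dimension at most $2$, hence of type $FP_\infty$. An equivalent, more structural packaging: regard $H^\bullet(\Gh,-)$ and $H^\bullet(\G,-)\circ\iota$ as cohomological $\delta$-functors on $\mathbf{DT}(\Gh)$; the former is universal (effaceable via coinduced modules), the surjectivity hypothesis makes the degreewise kernel of $\iota^*$ into a $\delta$-functor that is again effaceable and vanishes in degree $0$, hence vanishes identically. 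This reformulation still rests on first extending the surjectivity hypothesis from finite to all discrete torsion coefficients, which is exactly the point above; everything else is a routine diagram chase.
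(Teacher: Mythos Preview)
The paper does not prove this lemma; it merely cites Serre's \emph{Galois Cohomology}, I.2.6, Exercise~1. Your dimension-shifting strategy via coinduced modules is exactly Serre's, and your diagram chase is correct. You have also correctly located the one genuine issue: both the surjectivity hypothesis (used on $N$) and the inductive hypothesis (used on $L$) are invoked for infinite discrete modules, whereas the lemma is stated for finite coefficients only.

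Your proposed fix, however, is weaker than necessary and loses the generality of the lemma: appealing to type $FP_\infty$ makes the argument work only for the GBS groups in the paper, not for arbitrary $\G$ as the lemma asserts. Serre's original argument avoids this entirely by approximating $L$ by finite submodules \emph{before} doing the chase. Concretely: having written $\xi = \partial\eta$ with $\eta \in H^{n-1}(\Gh,L)$, use that profinite cohomology commutes with filtered colimits of discrete modules to find a finite $\Gh$-submodule $L_\alpha \subseteq L$ and $\eta_\alpha \in H^{n-1}(\Gh,L_\alpha)$ mapping to $\eta$. Its preimage $M_\alpha \subseteq N$ sits in a short exact sequence $0 \to M \to M_\alpha \to L_\alpha \to 0$ of \emph{finite} modules, whose connecting map still sends $\eta_\alpha$ to $\xi$ by naturality. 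Now run your ladder argument with $M_\alpha$ and $L_\alpha$ in place of $N$ and $L$: surjectivity of $\iota^*$ on the finite module $M_\alpha$ lifts the obstruction, and injectivity of $\iota^*$ on the finite module $L_\alpha$ (the inductive hypothesis) finishes the job. No finiteness assumption on $\G$ is needed. Replacing your final paragraph with this refinement gives a complete proof of the lemma in the stated generality.
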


One of the main methods of proving cohomological separability in the literature consists of using an efficient graph of groups decomposition. Let $\ggx$ be a graph of groups with fundamental group $\G$. We say that this decomposition is \emph{efficient} if the following hold:
\begin{enumerate}[(E1)]
    \item The fundamental group $\G$ is residually finite;
    \item The group $\G$ induces the full profinite topology on all edge and vertex groups of $\ggx$; and
    \item All edge and vertex groups of $\ggx$ are closed in $\G$ under the profinite topology.
\end{enumerate}
It is an observation of Grunewald, Jaikin-Zapirain and Zalesskii \cite[Theorem~1.4]{GJZ} that the fundamental group of an efficient graph of groups whose fibres have separable cohomology must have separable cohomology itself, wherewith they show that Bianchi groups have separable cohomology. Going further, a result of Wilton and Zalesskii \cite{WiltonZalesskiiVirtSpec} proves that every virtually special group has separable cohomology using the fact that the hierarchy associated to the finite index special subgroup constructed by Haglund and Wise in \cite{HaglundWise} is efficient. More recently, Jankiewicz and Schreve have shown that an \emph{algebraically clean} graph of groups, i.e. a graph of free groups where all edge groups include as free factors of their neighbouring vertex groups, is efficient and thus has cohomologically separable fundamental group \cite[Theorem~1.2]{jankiewicz2023profinite}.

It is unknown in general when a one-relator group (even with the extra assumption of residual finiteness) has separable cohomology --- see Question~\ref{Quest} above. Some partial results are known, most notably that virtually special one-relator groups have separable cohomology as above, and many one-relator groups are known to be virtually special \cite[Theorems~8.1 and~8.6]{linton2022} (see also \cite{WiseBook, LouderWilton17}).

One large advantage of working with one-relator groups is that they come equipped with a natural hierarchical structure known as a \emph{Magnus hierarchy} (\cite{Magnus+1930+141+165}, see also \cite[Section~5]{linton2022}), which has been instrumental in proving many results, but which unfortunately often fails to be efficient. In the world of generalised Baumslag--Solitar groups we work instead with the natural graph of $\Z$s decomposition above, which also fails to be efficient but where we will be able to explicitly calculate the induced topologies on edge and vertex groups (see Proposition~\ref{Prop::Closure}).

\section{Profinite Cohomology of Graphs of Groups}\label{Sec::Gen}
In this section, we assemble general results regarding profinite completions of graphs of groups and their cohomologies, as well as some early corollaries regarding GBS groups. Given a graph of groups $\ggx$ with residually finite fundamental group $\G = \PA$, one may form a graph of profinite groups $\ggc$ whose fibers are the closures of the fibers of $\ggx$ in $\Gh$. Then there exists an isomorphism of profinite groups $\Gh \cong \PP[\ggc]$ which commutes with the respective inclusion maps \cite[Proposition 6.5.3]{Ribes_Graphs}.

However, one may dispose of the assumption of residual finiteness at the expense of having to work with fibers whose image in $\Gh$ might be a proper quotient. In the following proposition, we record this result and a cohomological consequence thereof. While the proof given in \cite[Proposition 6.5.3]{Ribes_Graphs} seems to generalise to the present setting without substantial adjustment, we provide instead an argument phrased in a slightly more categorical fashion.
\begin{proposition}\label{Prop::ComDiag}
    Let $\G = \PA$ be a graph of groups over a finite graph $\Xi$ and $\iota \colon \G \to \Gh$ its profinite completion. The inclusions of edge and vertex groups in $\G$ induce inclusions of the closures of their images in $\Gh$ which form a graph of groups $\Gsc(x) = \overline{\iota\Gs(x)}$ over $x \in \Xi$ whose profinite fundamental group is $\Gh = \PP[(\Gsc,\Xi)]$. Moreover, for any finite $\G$-module $M$, there is a commutative diagram 
    \[
\begin{tikzcd}
\vdots \arrow[d]                                               &&& \vdots \arrow[d]                                                       \\
{H^n(\Gh,M)} \arrow[d, "i^*"]    \arrow[rrr, "\iota^*"]                                &&& {H^n(\G,M)} \arrow[d, "i^*"]                    \\
{\bigoplus_{v \in V(\Xi)} H^n(\Gsc(v),M)} \arrow[d, "\hbar"] \arrow[rrr, "\mu_V = \prod_{v \in V(\Xi)} (\iota\at{\Gs(v)})^*"]    &&& {\bigoplus_{v \in V(\Xi)} H^n(\Gs(v),M)} \arrow[d, "\hbar"]   \\
{\bigoplus_{e \in E(\Xi)} H^n(\Gsc(e),M)} \arrow[d, "\delta"] \arrow[rrr, "\mu_E = \prod_{e \in E(\Xi)} (\iota\at{\Gs(e)})^*"] &&& {\bigoplus_{e \in E(\Xi)} H^n(\Gs(e),M)} \arrow[d, "\delta"] \\
{H^{n+1}(\Gh,M)} \arrow[d]\arrow[rrr, "\iota^*"]                                 &&& {H^{n+1}(\G,M)} \arrow[d]                     \\
\vdots                                                         &&& \vdots                                                                
\end{tikzcd}
    \]
    whose columns are the long exact sequences given in Proposition~\ref{Prop::Mayer-Vietoris}. 
\end{proposition}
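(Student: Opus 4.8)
The plan is to establish the two halves of the proposition in turn; throughout we use that, $M$ being finite, its $\G$-action factors through a finite quotient of $\G$ and hence through $\Gh$, so that $M$ is a discrete $\Gh$-module and Proposition~\ref{Prop::Mayer-Vietoris} applies to $\Gh$.

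First I would identify $\Gh$ with $\PP[\ggc]$. One checks that $\ggc$ is genuinely a graph of profinite groups: each $\Gsc(x)=\overline{\iota\Gs(x)}$ is a closed, hence profinite, subgroup of $\Gh$, and since the canonical embedding of an edge group $\Gs(e)$ into $\G$, composed with $\iota$, lands (up to conjugation by the Bass--Serre stable letter $t_e$) inside $\iota\Gs(d_j(e))$, passing to closures yields inclusions of closed subgroups $\Gsc(e)\hookrightarrow\Gsc(d_j(e))$ restricting to $\iota\circ\partial_{e,j}$ on the dense subgroup $\iota\Gs(e)$. The vertex maps $\Gs(v)\to\Gsc(v)\to\PP[\ggc]$ together with the stable letters assemble into a homomorphism $\nu\colon\G\to\PP[\ggc]$ with dense image, because $\PP[\ggc]$ is topologically generated by the images of the vertex groups (each the closure of the corresponding $\nu(\Gs(v))$) together with the $t_e$, all of which lie in $\nu(\G)$; moreover one verifies directly that $\nu$ respects the defining relations of $\G$, using that the Bass--Serre relations are among the defining relations of $\PP[\ggc]$. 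By the universal property of the profinite completion, $\nu$ factors uniquely as $\psi\circ\iota$ with $\psi\colon\Gh\to\PP[\ggc]$ continuous, and $\psi$ is surjective by density. For an inverse I would use the presentation of $\PP[\ggc]$ as a quotient of the profinite free product $W\ggc$ by the closed normal subgroup generated by the tree stable letters and the relators $\partial_{e,1}^{-1}(g)\,t_e^{-1}\,\partial_{e,0}(g)\,t_e$ for $g\in\Gsc(e)$: the inclusions $\Gsc(v)\hookrightarrow\Gh$ and the assignment $t_e\mapsto\iota(t_e)$ (with $\iota(t_e)=1$ for tree edges, those stable letters being trivial in $\G$) respect every relator, since each relator depends continuously on $g\in\Gsc(e)$ and vanishes on the dense subset $\iota\Gs(e)$ by the corresponding Bass--Serre relation in $\G$, hence vanishes identically. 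This produces a continuous $\phi\colon\PP[\ggc]\to\Gh$, and checking $\phi\circ\psi=\mathrm{id}$ on the dense subgroup $\iota(\G)$ and $\psi\circ\phi=\mathrm{id}$ on the topological generators coming from the vertex groups and stable letters shows $\psi$ is an isomorphism compatible with the inclusion maps. This is the promised categorical reformulation of \cite[Proposition~6.5.3]{Ribes_Graphs}.

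Next I would observe that the data above exhibit $\iota$ as the morphism on fundamental groups induced by a morphism of graphs of groups $\ggx\to\ggc$ over $\mathrm{id}_\Xi$, with vertex and edge components the corestrictions $\iota|_{\Gs(x)}\colon\Gs(x)\to\Gsc(x)$. Concretely this amounts to three families of commuting squares: $\Gs(v)\hookrightarrow\G$ versus $\Gsc(v)\hookrightarrow\Gh$; the edge inclusions $\partial_{e,j}$ on $\Gs(e)$ versus on $\Gsc(e)$; and the conjugation-by-$t_e$ squares on edge groups, which commute because $\iota(t_e)$ is precisely the stable letter acting in the left column. The columns of the displayed diagram are the Mayer--Vietoris sequences of Proposition~\ref{Prop::Mayer-Vietoris} for $\G$ and for $\Gh=\PP[\ggc]$. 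Commutativity of the top squares, $i^*\circ\iota^*=\mu_V\circ i^*$, is functoriality of cohomology restriction applied to the first family of squares. Commutativity of the middle squares, $\hbar\circ\mu_V=\mu_E\circ\hbar$, follows from the explicit form of $\hbar$ as the difference of the two pullback maps along the edge inclusions, one of them twisted by the $t_e$-action on $M$: each pullback is natural by the second and third families of squares, and the twist agrees on both sides because the $\Gh$-module structure on $M$ restricts along $\iota$ to its $\G$-module structure. Commutativity of the bottom squares, $\iota^*\circ\delta=\delta\circ\mu_E$, is the naturality of the connecting homomorphism.

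The main obstacle is exactly this last point, since $\delta$ is only defined implicitly. I would deduce it from the functoriality of the whole Mayer--Vietoris machine: in each category the long exact sequence arises by applying $H^\bullet(-,M)$ and Shapiro's lemma to the cellular chain complex of the (abstract, respectively profinite) Bass--Serre tree on which $\G$, respectively $\Gh$, acts with the prescribed vertex and edge stabilisers, and a morphism of graphs of groups over $\mathrm{id}_\Xi$ induces a $\G$-equivariant map between these complexes (with $\G$ acting on the target through $\iota$), hence a morphism of the resulting long exact sequences; unwinding the Shapiro isomorphisms identifies this morphism with the horizontal maps $\iota^*,\mu_V,\mu_E$. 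The one genuine subtlety is that this comparison straddles the abstract and the profinite settings, so one must match an ordinary tensor/chain construction with its completed counterpart — routine given the compatibility of completion with the relevant (co)homological functors, but the place where I would take most care.
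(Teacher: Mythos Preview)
Your proposal is correct and covers the same ground as the paper, but the argument for bijectivity of $\psi\colon\Gh\to\PP[\ggc]$ takes a genuinely different route. The paper proves $\widehat\psi$ is injective by an element-separating argument: given $g\in\Gh\setminus\{1\}$, it picks an open normal $U\trianglelefteq\G$ with $g\notin U$, forms the graph of finite groups $(\Gs_U,\Xi)$ with $\Gs_U(x)=\Gs(x)/(\Gs(x)\cap U)$, and factors $\G\to\G/U$ through $\PA[(\Gs_U,\Xi)]$; invoking \cite[Proposition~6.5.6]{Ribes_Graphs} to identify $\PP[(\Gs_U,\Xi)]$ with the profinite completion of that virtually free group, it obtains a commutative diagram showing $\widehat\psi(g)\neq 1$. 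Your construction of an explicit inverse $\phi\colon\PP[\ggc]\to\Gh$ via the presentation of $\PP[\ggc]$, checking that the relators vanish by continuity from the dense subgroups $\iota\Gs(e)$, is more direct and avoids the appeal to the auxiliary result on graphs of finite groups; the paper's route, on the other hand, makes the connection to finite quotients more transparent. For the commutativity of the diagram both arguments are essentially the same naturality check, with the paper citing \cite[I.0.4]{brown_cohomology} for the boundary square where you invoke the Bass--Serre tree description; your caveat about matching abstract and completed constructions is exactly the point the paper skates over with that citation.
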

\begin{proof}
    Given $x \in \Xi$, let $\Gsc(x) = \overline{\iota \Gs(x)}$ be the closure of the image of $\Gs(x)$ in the profinite completion of $\G$. The profinite group $\Gsc(e)$ is then the completion of $\Gs(e)$ with respect to the topology induced by the system of open subgroups of $\G$. It follows that the inclusion maps $\partial_{e,i} \colon \Gs(e) \to \Gs(d_i(e))$ extend to continuous monomorphisms $\partial_{e,i} \colon \Gsc(e) \to \Gsc(d_i(e))$ whenever $e \in E(\Xi)$ and $i=0,1$. This defines a graph of profinite groups $\ggc$ over the finite graph $\Xi$.
    
    Let $G = \PP[\ggc]$ be the fundamental group of this graph of profinite groups. The canonical maps $\varphi_v \colon \Gsc(v) \to G$ might not be monomorphisms in general (cf. \cite[Remark 6.2.6]{Ribes_Graphs}); nonetheless one may form the composition
    \[
    \psi_v \colon \Gs(v) \xrightarrow{ \iota} \Gsc(v) \xrightarrow{\varphi_v} G
    \]
    whenever $v \in V(\Xi)$. Together with the identity morphism on stable letters, the collection of maps $(\psi_v : v \in V(\Xi))$ induces a map on the fundamental group $\psi \colon \G \to G$, which factors through the profinite completion 
    \[
    \begin{tikzcd}
        & \Gh \arrow[rd, "\widehat \psi"] & \\
        \G \arrow[rr, "\psi"] \arrow[ru, "\iota"] && G
    \end{tikzcd}
    \]
    yielding a morphism of profinite groups $\widehat \psi \colon \Gh \to G$. As the stable letters and images of vertex groups generate $G$ topologically, the morphism $\widehat \psi$ must be surjective. For injectivity,
    choose $g \in \Gh - 1$ and let $U \trianglelefteq_o \G$ be an open normal subgroup for which $g \notin U$. The graph of finite groups $(\Gs_U,\Xi)$ given by $\Gs_U(x) = \frac{\Gs(x)}{\Gs(x) \cap U}$ over $x \in \Xi$ has a virtually free fundamental group $\PA[(\Gs_U,\Xi)]$ and the canonical projection
    \[
    \G \xrightarrow{\iota} \Gh \xrightarrow{p_U} \G / U
    \]
    factors as
    \[
    \G \xrightarrow{\varpi_U} \PA[(\Gs_U, \Xi)] \xrightarrow{\pi_U} \G / U
    \]
    where $\varpi_U$ is induced by the quotient on each vertex group and the identity on stable letters, while $\pi_U$ is induced by the quotient of the inclusions of vertex groups and stable letters. Moreover, the profinite fundamental group of the graph of finite groups $(\Gs_U, \Xi)$ is identified with the profinite completion of its abstract fundamental group $(\Gs_U, \Xi)$ via \cite[Proposition 6.5.6]{Ribes_Graphs}, and the epimorphism $\pi_U$ factors as
    \[
    \PA[(\Gs_U, \Xi)] \xrightarrow{\iota_U} \PP[(\Gs_U, \Xi)] \xrightarrow{\widehat{\pi_U}} \G/U
    \]
    where $\iota_U \colon \PA[(\Gs_U, \Xi)] \hookrightarrow \PP[(\Gs_U, \Xi)]$ is the profinite completion map---it is injective as the virtually free group $\PA[(\Gs_U, \Xi)]$ is residually finite. The continuous map $q_U \colon G \to \PP[(\Gs_U, \Xi)]$ induced by the quotients of vertex groups agrees on the dense subset $\psi(\G)$ with the composition $\widehat{\pi} \circ p_U \circ \iota$, so we find that the diagram
    \[
    \begin{tikzcd}
        \Gh \arrow[dd, "\widehat \psi"', bend right] \arrow[rrr, "p_U", two heads] &&& \G/U\\
        & \G \arrow[ld, "\psi"'] \arrow[lu, "\iota"', hook] \arrow[r, "\varpi_U"] & {\PA[(\Gs_U,\Xi)]} \arrow[rd, "\iota_U", hook] \arrow[ru, "\pi_U", two heads] &\\
        G \arrow[rrr, "q_U", two heads]&&& {\PP[(\Gs_U,\Xi)]} \arrow[uu, "\widehat{\pi_U}"', two heads, bend right]
    \end{tikzcd}
    \]
    commutes. Now $\widehat{\pi_U}q_U\widehat\psi(g) = p_U(g) \neq 1$, whence also $\widehat\psi(g) \neq 1$. Thus $\widehat \psi$ is injective and an isomorphism of profinite groups.
    
    Finally, consider the postulated diagram of cohomology groups. Its columns are the long exact sequences given in Proposition~\ref{Prop::Mayer-Vietoris}. Observe that:
    \begin{enumerate}[(1)]
    	\item the horizontal maps $\mu_V$ and $\mu_E$ are induced by restrictions of the profinite completion map $\iota \colon \G \to \Gh$; and
    	\item the vertical maps maps $i^*$ and $\hbar$ are induced by the canonical morphisms $i_v \colon \Gs(v) \to \G$ (respectively, $\Gsc(v) \to \Gh$), as well as linear combinations of the monomorphisms $\partial_{e,i} \colon \Gs(e) \to \G$ (resp. $\partial_{e,i} \colon \Gsc(e) \to \Gh$) and twists by stable letters.
    \end{enumerate} 
    It follows from the construction of the profinite graph of groups $(\Gs, \Xi)$ that the maps in (1) commute with the maps in (2), so the upper two squares of the diagram must commute. The boundary homomorphisms $\delta$ derive naturally from the short exact sequence in \cite[Theorem 1]{Chiswell}, whence the bottom square of the diagram must commute as well (see e.g. \cite[I.0.4]{brown_cohomology}). This completes the proof.
\end{proof}
Using the above theorem, we will be able to work largely with procyclic groups when proving (or disproving) cohomological separability for GBS groups. If such a group has no torsion it will be isomorphic to the completion of $\Z$ with respect to some formation of finite groups by classification of procyclic groups \cite[Proposition 2.3.8 and Proposition 2.7.1]{Bible}. To study the cohomology of such completions, we shall require the following result, which generalises Lemma~\ref{Lem::DimOne} to completions with respect to narrower formations of finite groups.
\begin{lemma}
    Let $\Cc$ be a formation of finite groups, $\G$ a group, and $\iota \colon \G \to \Gc$ its pro-$\Cc$ completion. Let $M \in \Cc$ be a $\Gc$-module, considered also as a $\G$-module with action via the map $\iota$. For any derivation $f \colon \Gamma \to M$, there exists a unique continuous derivation $\fh \colon \Gc \to M$ satisfying $\fh \iota = f$.
\end{lemma}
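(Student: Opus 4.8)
The plan is to promote the abstract derivation $f$ to a continuous derivation on the pro-$\Cc$ completion by a twisted-homomorphism (semidirect product) argument, then transport the result through the universal property of $\Gc$. First I would form the semidirect product $M \rtimes \G$, where $\G$ acts on the finite abelian group $M$ via $\iota$. The datum of a derivation $f \colon \G \to M$ is equivalent to a group homomorphism $\sigma_f \colon \G \to M \rtimes \G$ of the form $\sigma_f(g) = (f(g), g)$ which is a section of the projection $M \rtimes \G \twoheadrightarrow \G$; the derivation identity $f(gh) = f(g) + g\cdot f(h)$ is exactly what makes $\sigma_f$ multiplicative. Since $M$ is finite and the $\G$-action factors through a finite quotient (the image of $\G$ in $\Aut(M)$ together with the quotient $\G/\ker$ needed to see $f$ lands in a finite set — more precisely, $M \rtimes \G$ need not be finite, so I instead observe that $M \rtimes Q$ is finite for a suitable finite quotient $Q$ of $\G$ through which both the action and $f$ factor; here I use that the image $f(\G) \subseteq M$ is finite so the subgroup it generates together with the graph of $f$ is detected in a finite quotient). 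Concretely: let $K = \bigcap_{m \in M}\operatorname{Stab}_\G(m) \cap \ker(\G \to \Aut(M))$, intersected with a finite-index normal subgroup on which $f$ is a homomorphism into the fixed submodule; then $f$ descends to a derivation on the finite quotient, so $M \rtimes (\G/K') \in \Cc$ for an appropriate $K'$, using that $\Cc$ is a formation (closed under subgroups of finite products / quotients as needed).

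Next I would apply the universal property of the pro-$\Cc$ completion. Because $M \rtimes Q$ lies in $\Cc$ (or is pro-$\Cc$) and $\sigma_f$ composed with $\G \to Q$ is a continuous-target homomorphism, it extends uniquely to a continuous homomorphism $\widehat{\sigma} \colon \Gc \to M \rtimes Q$ with $\widehat{\sigma}\,\iota = \sigma_f$ (mod $K'$). Composing with the projection to $M$ and noting that $\widehat\sigma$ is again a section of $M \rtimes Q \to Q$ over the image of $\Gc$, one reads off a continuous map $\fh \colon \Gc \to M$ with $\fh(\iota(g)) = f(g)$, and the fact that $\widehat\sigma$ is a homomorphism translates back into $\fh(xy) = \fh(x) + x \cdot \fh(y)$ for $x, y \in \Gc$ — i.e. $\fh$ is a continuous derivation. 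Uniqueness is immediate: two continuous derivations agreeing on the dense subset $\iota(\G)$ of $\Gc$ agree everywhere, since a continuous derivation is determined by its underlying continuous set-map and $M$ is Hausdorff.

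The main obstacle is the finiteness bookkeeping in the first paragraph: $M \rtimes \G$ is generally infinite, so one cannot directly quote the universal property, and one must isolate the correct finite quotient $Q$ of $\G$ — simultaneously trivialising the part of the action and the part of $f$ that is invisible — and then verify that the resulting finite group $M \rtimes Q$ genuinely belongs to (the pro-objects of) the formation $\Cc$. This is where the hypothesis that $\Cc$ is a \emph{formation} (rather than an arbitrary class) is used: one needs closure under the relevant quotients and subdirect products to guarantee $M \rtimes Q$ is a valid test object for the pro-$\Cc$ completion. Once that is set up, everything else is formal manipulation of the semidirect-product picture and the universal property.
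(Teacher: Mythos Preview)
Your core idea---encode the derivation as a homomorphic section into a semidirect product and then invoke the universal property of the pro-$\Cc$ completion---is exactly the paper's strategy, and your uniqueness argument via density is the same as well.

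Where you diverge is in the target of the universal property. You try to manufacture a \emph{finite} quotient $Q$ of $\G$ through which both the action and the derivation $f$ factor, so that $M \rtimes Q$ is a finite test object; you then (correctly) flag the resulting ``finiteness bookkeeping'' as the main obstacle. The paper bypasses this entirely: it composes the section $\G \to M \rtimes \G$ with $(\operatorname{id}_M,\iota)$ to land in the \emph{profinite} group $M \rtimes \Gc$, and applies the universal property of $\Gc$ directly to this target. There is then no need to factor $f$ through any finite quotient, no need to check that such a quotient lies in $\Cc$, and no need to reconstruct a derivation on $\Gc$ from data that only lives over a finite image---one simply observes that $\operatorname{pr}_2 \circ \widehat{\sigma}$ agrees with $\operatorname{id}_{\Gc}$ on the dense image $\iota(\G)$, hence everywhere, so $\widehat{\sigma}$ is a genuine section over $\Gc$ and its first coordinate is the desired continuous derivation $\fh$.

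In short: your approach is not wrong, but the obstacle you isolate is an artefact of insisting on a finite target. Replacing $M \rtimes Q$ by $M \rtimes \Gc$ makes it disappear, and the proof becomes three lines. (One residual subtlety, present in both versions, is that invoking the universal property for the target $M \rtimes \Gc$ tacitly uses that this group is pro-$\Cc$; this holds automatically in the extension-closed cases actually used in the paper, namely $\Cc = \Cc(\Sc)$ for a set of primes $\Sc$.)
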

\begin{proof}   
    Recall that derivations $\G \to M$ are equivalently homomorphic sections $\G \to M \rtimes \G$ of the proj  ection $\operatorname{pr_2} \colon M \rtimes \G \to \G$. It follows that $f$ induces a homomorphism
    \[
    f' \colon \G \xrightarrow{f} M \rtimes \G \xrightarrow{(\operatorname{id},\iota)} M \rtimes \Gc 
    \]
    where the action of $\Gc \curvearrowright M$ arises canonically as the composition of the map $\iota$ and the action $\G \curvearrowright M$. Using the universal property of $\Gc$ (see \cite[Lemma 3.2.1]{Bible}, for example), we obtain a unique homomorphism
    \[
    \fh \colon \Gc \to M \rtimes \Gc
    \] satisfying $\fh \iota = f'$. The composition $\operatorname{pr}_2 \fh$ agrees with $\operatorname{id}_{\Gc}$ on the dense subset $\iota(\G) \subseteq \Gc$, so it must agree on all of $\Gc$. Thus $\fh$ is equivalently a continuous derivation $\fh \colon \Gc \to M$ satisfying $\fh\iota = f$. Its uniqueness follows again from the density of $\iota(\G)$ in $\Gc$.
\end{proof}
Given a set of primes $\Sc$, we shall write $\Cc(\Sc)$ for the formation of groups whose orders are divisible only by primes in $\Sc$. Given a group $\G$, we shall write $\Gsp$ for the completion of $\G$ with respect to the pro-$\Cc(\Sc)$ topology, which is commonly referred to as the ``pro-$\Sc$ completion''. For instance, the Chinese Remainder Theorem yields $\Z_{\Sc} = \prod_{p \in \Sc} \Z_p$. In this language, the previous proposition reduces to:
\begin{corollary}\label{Cor::ContDer}
    Let $\Sc$ be a set of primes, $\G$ a group and $\iota \colon \G \to \Gsp$ its pro-$\Sc$ completion. Let $M$ be a finite $\Gsp$-module whose order is a product of primes in $\Sc$, with $\G$-module structure via the map $\iota$. For any derivation $f \colon \G \to M$, there exists a unique continuous derivation $\fh \colon \Gsp \to M$ satisfying $\fh \iota = f$.
\end{corollary}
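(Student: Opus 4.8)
The plan is to obtain this corollary as an immediate specialisation of the preceding lemma, taking the formation $\Cc$ there to be $\Cc(\Sc)$, the class of finite groups whose order is a $\Sc$-number. Since the entire analytic content of that lemma --- lifting a derivation to a homomorphic section $\G \to M \rtimes \Gsp$ and invoking the universal property of the completion --- has already been established, the only point that genuinely requires verification is that $\Cc(\Sc)$ is a formation of finite groups.

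First I would check that $\Cc(\Sc)$ is closed under quotients and fibre products. Closure under quotients is immediate, as $|G/N|$ divides $|G|$ and hence remains a $\Sc$-number. For fibre products, if $G/N_1, G/N_2 \in \Cc(\Sc)$ then $G/(N_1 \cap N_2)$ embeds into $G/N_1 \times G/N_2$, so its order divides $|G/N_1|\,|G/N_2|$, which again involves only primes in $\Sc$. Thus $\Cc(\Sc)$ is a formation, and by definition the associated completion of $\G$ is precisely $\Gsp$, i.e. $\Gsp = \Gc$ in the notation of the lemma when $\Cc = \Cc(\Sc)$. Next I would observe that the hypothesis on $M$ --- that $|M|$ is a product of primes in $\Sc$ --- is exactly the assertion $M \in \Cc(\Sc)$, and that a finite module is automatically discrete; so the preceding lemma applies verbatim and produces the unique continuous derivation $\fh \colon \Gsp \to M$ with $\fh\iota = f$, which is the claim.

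I do not anticipate a genuine obstacle: the corollary is a bookkeeping instance of the lemma, and the only substantive step --- confirming that $\Cc(\Sc)$ is a formation --- is routine. The Chinese Remainder decomposition $\Z_\Sc = \prod_{p \in \Sc}\Z_p$ recalled just above plays no role in this argument, but is part of why the pro-$\Sc$ setting is convenient for the cohomological computations that follow.
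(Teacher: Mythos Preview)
Your proposal is correct and matches the paper's approach exactly: the paper states the corollary immediately after the lemma with the preamble ``In this language, the previous proposition reduces to:'', offering no further proof. Your added verification that $\Cc(\Sc)$ is a formation is routine and correct, and your identification of the hypothesis $|M|$ being an $\Sc$-number with $M \in \Cc(\Sc)$ is precisely what is needed.
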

As a conclusion of this sequence of results we have the following, which we will require for the torsion free case.

\begin{lemma}\label{Lem::Surj}
	Let $\G = \PA$ be a generalised Baumslag--Solitar group with profinite completion $\iota \colon \G \to \Gh$. Assume that there is a collection of primes $\Sc$ such that the profinite topology induced by $\G$ on each edge group is the full pro-$\Sc$ topology. The map \[
	\mu_E = \prod_{e \in E(\Xi)} (\iota\at{\Gs(e)})^* \colon \bigoplus_{e \in E(\Xi)} H^1(\Gsc(e),M) \longrightarrow \bigoplus_{e \in E(\Xi)} H^1(\Gs(e),M)
	\] is surjective for any finite $\G$-module $M$ whose order is a product of primes in $\Sc$.
\end{lemma}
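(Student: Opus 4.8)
The plan is to handle each edge group separately and reduce the whole statement to the derivation-lifting result of Corollary~\ref{Cor::ContDer}. Since $E(\Xi)$ is finite and $\mu_E$ is visibly the direct sum over $e \in E(\Xi)$ of the restriction maps $(\iota\at{\Gs(e)})^* \colon H^1(\Gsc(e),M) \to H^1(\Gs(e),M)$, and a finite direct sum of surjections is surjective, it suffices to show that each $(\iota\at{\Gs(e)})^*$ is onto.

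Fix an edge $e \in E(\Xi)$, so that $\Gs(e) \cong \Z$. First I would pin down the profinite group $\Gsc(e) = \overline{\iota\Gs(e)}$: being the closure of the image of $\Gs(e)$ in $\Gh$, it is canonically, and compatibly with $\iota\at{\Gs(e)}$, the completion of $\Gs(e)$ with respect to the profinite topology that $\G$ induces on it. By the hypothesis on induced topologies, this is precisely the pro-$\Sc$ completion of $\Gs(e)$. Next I would check that $M$ meets the hypotheses of Corollary~\ref{Cor::ContDer} applied to the group $\Gs(e)$ in place of $\G$: its order is a product of primes in $\Sc$ by assumption, and since $M$ is a finite $\G$-module the action $\G \to \Aut(M)$ factors through $\Gh$, making $M$ a discrete $\Gh$-module and hence, by restriction along the inclusion $\Gsc(e) \hookrightarrow \Gh$, a module over $\Gsc(e)$ --- that is, over the pro-$\Sc$ completion of $\Gs(e)$ --- whose underlying $\Gs(e)$-action is the original one, obtained via $\iota\at{\Gs(e)}$.

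Granting this, surjectivity is immediate. Recall that $H^1$ of an abstract or profinite group with discrete coefficient module is computed as (continuous) derivations modulo inner derivations, the latter being automatically continuous since $M$ is discrete. Given a class in $H^1(\Gs(e),M)$ represented by a derivation $f \colon \Gs(e) \to M$, Corollary~\ref{Cor::ContDer} furnishes a continuous derivation $\fh \colon \Gsc(e) \to M$ with $\fh \circ (\iota\at{\Gs(e)}) = f$; hence $(\iota\at{\Gs(e)})^*$ carries the class of $\fh$ to the class of $f$, and so is surjective. Running this over all edges of $\Xi$ yields the surjectivity of $\mu_E$.

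I expect the only non-mechanical point to be the identification of $\Gsc(e)$ with the pro-$\Sc$ completion of $\Gs(e)$ together with the verification that the coefficient module $M$ --- which a priori carries only a $\G$-module structure --- genuinely becomes a module over that completion, so that Corollary~\ref{Cor::ContDer} applies verbatim; once this bookkeeping around the induced profinite topology is settled, the rest is a formal consequence of the lifting of derivations. (In fact the same input shows $(\iota\at{\Gs(e)})^*$ is injective as well, by uniqueness of the lift together with the observation that a derivation of $\Gs(e)$ is inner exactly when its lift is, but only surjectivity is needed here.)
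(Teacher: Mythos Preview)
Your proposal is correct and follows essentially the same approach as the paper: decompose $\mu_E$ edge by edge, identify $\Gsc(e)$ with the pro-$\Sc$ completion of $\Gs(e)\cong\Z$ via the hypothesis on induced topologies, equip $M$ with a $\Gsc(e)$-module structure by restriction from $\Gh$, and then invoke Corollary~\ref{Cor::ContDer} to lift derivations. The paper's proof differs only in presentation, not in substance.
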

\begin{proof}
	The map $\mu_E$ decomposes into a product of maps \[\mu_E^e = (\iota\at{\Gs(e)})^* \colon H^1(\Gsc(e),M) \to H^1(\Gs(e),M)\] where $e \in E(\Xi)$ ranges through all edges of $\Xi$. Hence, it will suffice to show that each individual $\mu_{E}^e$ is surjective. Indeed, let $M$ be a finite $\G$-module whose order is divisible only by primes in $\Sc$, and let $[f] \in H^1(\Gs(e),M)$ be a class represented by some derivation $f \colon \Gs(e) \to M$. The finite $\G$-module structure on $M$ induces a finite $\Gh$-module structure via the universal property of the profinite completion, which in turn induces a finite $\Gsc(e)$-module structure on $M$ via restriction. By assumption, there is an isomorphism $\Gs(e) \cong \Z_{\widehat \Sc}$, and the restriction of the profinite completion map $\iota \at{\Gs(e)}$ of $\Gamma$ translates to the pro-$\Sc$ completion map $\iota_{\Sc} \colon \Z \to \Z_{\widehat \Sc}$ under this isomorphism. Now Corollary~\ref{Cor::ContDer} yields a continuous derivation $\fh \colon \Gsc(e) \to M$ such that $\fh \iota_{\Sc} = f$, or equivalently, $\fh \iota\at{\Gs(e)} = f$. Thus $\mu_E^e([\fh]) = f$, as required.
\end{proof}

Finally, we have the following technical lemma which will be useful in detecting torsion in the subgroups $\overline{\mathcal{G}}(u)\leq\Gh$ for $v\in v(\Xi)$.

\begin{lemma} \label{Lem::DifferenceIsTorsion}
    Let $\G = \PA$ be a generalised Baumslag--Solitar group and write $\iota\colon\G\rightarrow\Gh$ to denote its profinite completion. If $\Gh$ is torsion-free then the profinite topology induced by $\G$ on each of its edge and vertex groups agrees.
\end{lemma}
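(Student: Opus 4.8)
The plan is to show the contrapositive in a local-to-global fashion: if the profinite topology induced by $\G$ on some edge or vertex group fails to agree with the full profinite topology of that group, then $\Gh$ contains torsion. Since every edge group $\Gs(e)$ includes into its neighbouring vertex groups $\Gs(d_0(e))$ and $\Gs(d_1(e))$ with finite index, and since a cyclic group induces on a finite-index subgroup the same (restricted) topology, it suffices to work with vertex groups: the topology induced by $\G$ on $\Gs(v)$ agrees with the full profinite topology of $\Gs(v)$ for every $v$ if and only if the same holds for every edge group. So I would first reduce to showing: if $\overline{\iota\Gs(v)} \leq \Gh$ is not isomorphic to $\pZ$ for some vertex $v$ — equivalently, the induced topology on $\Gs(v) \cong \Z$ is strictly coarser than the full profinite topology — then $\Gh$ has torsion.

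Next I would identify the closures $\Gsc(v) = \overline{\iota\Gs(v)}$ as procyclic groups, being closures of images of $\Z$. By the classification of procyclic groups \cite[Proposition 2.7.1]{Bible}, each such $\Gsc(v)$ is a quotient of $\pZ$, hence of the form $\pZ/c$ for some supernatural number $c$, or equivalently a product $\prod_p \Z/p^{n_p(v)}$ where $n_p(v) \in \Z_{\geq 0} \cup \{\infty\}$. If the induced topology on some $\Gs(v)$ is strictly coarser than the full profinite one, then some $n_p(v)$ is finite, i.e. $\Gsc(v)$ has a nontrivial finite (hence torsion) procyclic direct factor — but this by itself does not immediately produce torsion in $\Gh$, only in a subquotient. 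The actual mechanism must go through the interaction of adjacent vertex groups. The key point is the edge relations: for an edge $e$ with endpoints $u = d_0(e)$, $v = d_1(e)$ and inclusion indices $\yi_0(e), \yi_1(e)$, the images $\iota\Gs(u)$ and $\iota\Gs(v)$ share the subgroup $\iota\Gs(e)$, which has index $\yi_0(e)$ in the first and $\yi_1(e)$ in the second. Taking closures, $\Gsc(e)$ sits with index (dividing) $\yi_0(e)$ in $\Gsc(u)$ and index (dividing) $\yi_1(e)$ in $\Gsc(v)$. Comparing the supernatural numbers prime by prime, if the induced topologies on $\Gs(u)$ and $\Gs(e)$ differ — i.e. if $\Gsc(e) \subsetneq \Gsc(u)$ has index strictly less than it should — then propagating around a cycle (using the structure of $\Gh$ as $\PP[\ggc]$ from Proposition~\ref{Prop::ComDiag}) forces the stable-letter conjugation to act on a procyclic group in a way that is incompatible with torsion-freeness; concretely, one exhibits an element of finite order as a commutator-type expression $t_e^{-1} g t_e g^{-\yi_0(e)/\yi_1(e)}$ interpreted appropriately, whose order divides a fixed finite number.

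More directly, I expect the cleanest route is: in $\Gh = \PP[\ggc]$, the edge group $\Gsc(e)$ embeds (by injectivity of the profinite graph of groups after passing to the standard injective replacement, or directly since here all fibres are procyclic) into both $\Gsc(u)$ and $\Gsc(v)$, and the full profinite completion $\widehat{\Gs(e)} \cong \pZ$ surjects onto $\Gsc(e)$ with kernel $K$. If $K \neq 1$, pick a prime $p$ with $\nu_p$ of the supernatural order of $\Gsc(e)$ finite, say equal to $k$. Then inside $\Gsc(u) \cong \prod_q \Z/q^{n_q(u)}$, the subgroup $\Gsc(e)$ has $p$-part of order $p^k$ while $\Gsc(u)$ has $p$-part $p^{n_p(u)}$; finiteness of the index forces $n_p(u) - k$ finite, hence $n_p(u)$ finite, so $\Gsc(u)$ itself has a nontrivial finite cyclic $p$-factor $T_p \cong \Z/p^{n_p(u)}$ which is a characteristic, hence \emph{normal}, subgroup of $\Gsc(u)$; one then checks $T_p$ is in fact contained in a conjugate of itself by every stable letter (using the edge relations to see $T_p$ is carried into $\Gsc(v)$ and back), so $T_p$ generates a finite normal subgroup of $\Gh$ — in particular $\Gh$ has torsion. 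The main obstacle, and the step requiring genuine care rather than bookkeeping, is verifying that this finite $p$-torsion subgroup actually survives in $\Gh$ rather than being killed by the amalgamation — i.e. controlling how the normal closure in $W\ggc$ of the edge relations interacts with the torsion part of the procyclic fibres. I would handle this by using the explicit description of $N\gxt$ together with the fact that a graph of \emph{procyclic} groups, being in particular a graph of abelian-by-finite groups, has fundamental group whose torsion is detected on conjugates of the fibres \cite[Section 9.4]{Ribes_Graphs} or \cite{Wilkes_Book}, so that a nontrivial torsion element of a fibre $\Gsc(v)$ is automatically nontrivial torsion in $\Gh$.
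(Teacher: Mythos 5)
There is a genuine gap, and it originates in a misreading of the statement. The lemma asserts that the profinite topologies which $\G$ induces on its various edge and vertex groups agree \emph{with one another} (equivalently, all the closures $\Gsc(x)=\overline{\iota\Gs(x)}$ are isomorphic to one common procyclic group $\Z_{\widehat{\Sc}}$, compatibly with the identifications of the fibres with $\Z$); it does not assert that this common topology is the \emph{full} profinite topology. The contrapositive you set out to prove --- that a strictly coarser induced topology on some fibre forces torsion in $\Gh$ --- is false: for $\G=\operatorname{BS}(2,3)$ the topology induced on every fibre is the pro-$\Sc$ topology for $\Sc$ the set of primes not dividing $6$ (Proposition~\ref{Prop::Closure}), strictly coarser than the full profinite topology, and yet $\Gh\cong\prod_{p\nmid 6}\Z_p\rtimes\pZ$ is torsion-free (Example~\ref{Ex::coprimeBS}). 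The precise step of your argument that fails is the claim that a finite $p$-part of $\Gsc(u)$ gives a \emph{nontrivial} finite cyclic factor $T_p\cong\Z/p^{n_p(u)}$: torsion-freeness of $\Gh$ forces each exponent $n_p(u)$ to be either $0$ or $\infty$, never finite and positive, so the only way the induced topology can be coarse is for $p$-parts to vanish outright --- which produces no torsion anywhere. (Your closing concern about torsion in a fibre ``surviving'' into $\Gh$ is also moot: by the construction in Proposition~\ref{Prop::ComDiag} the fibres $\Gsc(x)$ are closed subgroups of $\Gh$, so torsion there is literally torsion in $\Gh$.)

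The correct argument, fragments of which do appear in your proposal, is purely local. Torsion-freeness together with the classification of procyclic groups forces, for each fibre $\Gs(x)$ and each prime $p$, the dichotomy ``$\Z_p\leq\Gsc(x)$'' or ``$\Gs(x)$ has no induced $p$-quotients''. The finite-index inclusions $\Gs(e)\leq\Gs(d_i(e))$ transfer this dichotomy between an edge group and its endpoint vertex groups in both directions: arbitrarily large induced $p$-power quotients of $\Gs(v)$ restrict to arbitrarily large ones of $\Gs(e)$, since passing to the index-$\yi_i(e)$ subgroup only shifts the $p$-valuation of the order by the constant $\nu_p(\yi_i(e))$; conversely $\Gsc(e)\leq\Gsc(v)$ rules out $p$-quotients of $\Gs(e)$ when $\Gs(v)$ has none. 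Hence adjacent fibres have the same associated set of primes, and connectivity of $\Xi$ propagates this to all fibres. That is exactly the paper's proof; no analysis of cycles, stable letters, or normal closures is needed.
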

\begin{proof}
    Let $v\in V(\Xi)$ be any vertex, and $e\in E(\Xi)$ an edge incident on $v$, and let $a$ be a generator of $\Gs(v)$. We may assume without loss of generality that $v=d_0(e)$.

    Assume that there exists some finite quotient $\phi\colon\G\rightarrow Q$ of $\G$ in which $|\phi(\Gs(v))|=p^k$ for some prime $p$ and $k\in\Z_{>1}$. Then by the classification of procyclic groups \cite[Proposition 2.3.8 and Proposition 2.7.1]{Bible} either $\Gsc(v)$ has torsion, a contradiction, or $\Gsc(v)$ contains a full copy of the $p$-adic integers $\Z_p$, so we must have $\Z_p\leq \Gsc(v)$. It follows that there exists a finite quotient $\theta\colon\G\rightarrow F$ of $\G$ in which $|\theta(\Gs(v))|=p^{k+\nu_p(\yi_0(e))}$. Then $\theta(\Gs(v))$ is cyclic, and there exists some generator $\omega\in \theta(\Gs(v))$ such that $\theta(a)=\omega$. The subgroup $\theta(\Gs(e))\leq \theta(\Gs(v))$ is then generated by $\omega^{\yi_0(e)}$, which has order $|\omega|-\nu_p(\yi_0(e))=k$. Thus $|\theta(\Gs(e))|=p^k$, so by the classification of procyclic groups and the fact that $\Gh$ is torsion free we must have that $\Z_p\leq \Gsc(e)$.
    
    Conversely, if $\Gs(v)$ has no induced $p$-quotients then neither does $\Gs(e)$ as $\Gsc(e)\leq\Gsc(v)$, so it follows that $\Gsc(e)\cong\Gsc(v)$, again by classification of procyclic groups.
    Thus, for all $v\in V(\Xi)$ and $e\in E(\Xi)$ incident on $v$ we have that $\Gsc(e)\cong\Gsc(v)$, and so the result then follows by the connectivity of $\Xi$.
\end{proof}

\section{The Cycle Case}
\label{Sec::BS}
In this section, we shall decide the cohomological separability of generalised Baumslag--Solitar groups over a cycle $\Xi$, in terms of the prime decomposition of the augmentation products $n = n(\Xi)$ and $m = m(\Xi)$ of the cycle (Theorem~\ref{Thm::MTALoopCase}). Given two integers $n,m \in \mathbb{Z}$, we shall define the \emph{isocracy locus} as the collection of primes
\[
\Ic(n,m) = \{p \,\, \mathrm{prime} \mid \nu_p(n) = \nu_p(m)\}
\]
that is, those primes $p$ which divide $n$ and $m$ with equal (possibly null) power. We obtain the following three results, which characterise the profinite topology induced by $\G$ on its edge and vertex groups in terms of the primes contained in $\Ic(n,m)$.
\begin{lemma}\label{Lem::Mini_Closure}
    Let $\G = \PA$ be a generalised Baumslag--Solitar group over a graph $\Xi$ with a single cycle $\Upsilon \subseteq \Xi$, let $\iota \colon \G \to \Gh$ be its profinite completion, and assume that the augmentation products of the cycle $n = n(\Upsilon)$ and $m = m(\Upsilon)$ are isocratic. For any edge $e \in E(\Xi)$, the profinite topology induced by $\G$ on the edge group $\Gs(e)$ contains the full pro-$\Ic(n,m)$ topology, i.e. there is a monomorphism
    \[
        \Z_{\widehat{\Ic(n,m)}} \hookrightarrow \overline{\iota(\Gs(e))}
    \]
    which commutes with the canonical maps $\Z \to \Z_{\widehat{\Ic(n,m)}}$ and $\Gs(e)\to \overline{\iota(\Gs(e))}$.
\end{lemma}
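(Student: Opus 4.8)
The plan is to show that $\G$ induces a profinite topology on each edge group $\Gs(e)$ which is at least as fine as the pro-$\Ic(n,m)$ topology, by exhibiting, for each prime $p \in \Ic(n,m)$ and each $k \geq 1$, a finite quotient $\phi \colon \G \to Q$ in which $\phi(\Gs(e))$ has order divisible by $p^k$ and is cyclic. Concretely, since $\Gs(e)\cong\Z$, it suffices to produce for each $p\in\Ic(n,m)$ a compatible family of quotients onto cyclic $p$-groups $\Z/p^k$ on which the restriction to $\Gs(e)$ is surjective; assembling these over all $p\in\Ic(n,m)$ and all $k$ then yields the desired monomorphism $\Z_{\widehat{\Ic(n,m)}}\hookrightarrow\overline{\iota(\Gs(e))}$ commuting with the canonical maps.

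First I would reduce to the case where $e$ lies on the cycle $\Upsilon$: if $e$ is a tree edge outside the cycle, it is connected to $\Upsilon$ by a path, and an abelian $p$-quotient supported near $\Upsilon$ can be pulled along this path (multiplying orders by the relevant $\yi$-indices along the way, and by the connectivity of $\Xi$) so that it restricts nontrivially — with controlled $p$-valuation — to $\Gs(e)$. So assume $e = e_i$ is a cycle edge. Next, fix $p\in\Ic(n,m)$, so $\nu_p(n)=\nu_p(m)=:d$. Using the canonical presentation \eqref{Eq::CanonicalPresentation} of the cycle case and the function $\varepsilon_{p,w}$ of \eqref{Eq::Epsilon}, I would build a homomorphism from $\G$ onto a finite $p$-group as follows: send each vertex generator $a_j$ to an element of a large cyclic $p$-group $C = \Z/p^{N}$ of order $p^{k+\varepsilon_{p,w}(v_j)-\varepsilon_{p,w}(v_i)}$ (where $v_i$ minimises $\varepsilon_{p,w}$ and $k$ is chosen large), which is consistent with the tree relations $a_j^{n(e_j)}=a_{j-1}^{m(e_j)}$ by the valuation bookkeeping of \eqref{Eq::Order}; the isocracy hypothesis $\nu_p(n)=\nu_p(m)$ is precisely what makes the cycle relation $t a_s^{n_s} t^{-1} = a_1^{m_s}$ satisfiable after possibly also mapping $t$ to an automorphism or translation of $C$ — this is where \eqref{eq:epsilonFormula} enters. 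The order of $\phi(a_j)$, and hence of $\phi(\Gs(e_i))$ (which is generated by a power of $\phi(a_{d_0(e_i)})$), is then a positive power of $p$ that grows with $k$, so these quotients detect arbitrarily high powers of $p$ in $\Gs(e_i)$.

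Finally, I would check that these quotients are \emph{compatible} as $k$ varies — i.e. that they form an inverse system realising $\Z/p^\infty$ — and that the images of $\Gs(e)$ are genuinely \emph{cyclic} of $p$-power order (not just that $p^k$ divides the order), which together with the classification of procyclic groups gives $\Z_p\hookrightarrow\overline{\iota(\Gs(e))}$ for each $p\in\Ic(n,m)$; taking the product over $p\in\Ic(n,m)$ yields the statement. The main obstacle I anticipate is the cycle edge $e_s$ and its stable letter $t$: one must verify that the assignment of $a_j$'s to elements of a cyclic $p$-group can be simultaneously extended over the HNN relation, and this is exactly the content of the isocracy condition via \eqref{eq:epsilonFormula} — without $\nu_p(n)=\nu_p(m)$ the valuations around the cycle fail to close up and no such finite $p$-quotient with large image on $\Gs(e)$ exists. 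Getting the constant ambiguity in $\varepsilon_{p,w}$ and the orientation conventions of Figure~\ref{Fig:HEXAGON} to line up correctly in the relation for $e_s$ is the delicate bookkeeping step.
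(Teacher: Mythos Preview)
Your proposal is correct and follows essentially the same approach as the paper: reduce via the classification of procyclic groups to showing that for each $p\in\Ic(n,m)$ and each $l\geq 1$ there is a finite quotient of $\G$ in which $\Gs(e)$ has image cyclic of order $p^l$, then build such a quotient into $C\rtimes\Aut(C)$ (with $C$ a cyclic $p$-group) by assigning $a_j\mapsto p^{\varepsilon(v_j)}\cdot(\text{unit})$ and sending the stable letter $t$ to the automorphism of $C$ that closes up the cycle relation, which exists precisely because isocracy forces the two sides of the HNN relation to have equal order in $C$ via \eqref{eq:epsilonFormula}.

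Two minor remarks. First, your compatibility check ``as $k$ varies'' is unnecessary: $\overline{\iota(\Gs(e))}$ is already procyclic, so its $p$-component is either finite or $\Z_p$, and merely exhibiting quotients of arbitrarily large $p$-power order forces the latter---no inverse-system bookkeeping is needed. Second, the paper does not separately reduce to cycle edges; it treats all edges at once by running the inductive definition of $f$ outward along a spanning tree of the whole graph $\Xi$ (oriented so that $\Upsilon$ is clockwise and the remaining branches point away from $\Upsilon$), which automatically handles tree edges without a separate ``pull along the path'' argument.
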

\begin{proof}
    Write $\Chi = \Gs(e)$ and $K = \overline{\iota(\Gs(e))}$. By the classification of procyclic groups \cite[Proposition 2.3.8]{Bible}, there is an isomorphism $K \cong \prod_{p \, \mathrm{prime}} K_p$ for some collection of procyclic pro-$p$ groups $K_p$. This isomorphism is canonical in the sense that there is a commutative diagram
    \[
    \begin{tikzcd}
        \Z_{\widehat\Sc} \arrow[r, hook] & K \\
        \Z \arrow[u, hook] \arrow[r, "\vartheta_e"] & \Chi \arrow[u]
    \end{tikzcd}
    \]
    whenever $\Sc$ is a collection of primes for which $K_p \cong \Z_p$. We shall demonstrate that this is the case for $\Sc = \Ic(n,m)$, meaning that for any prime $p \in \Ic(n,m)$ and positive integer $l$, there exists a map $f \colon \G \to Q$ such that $C = f(\Chi)$ is a cyclic $p$-group of order $p^l$. Fix such a prime $p \in \Ic(n,m)$ and let $k = \nu_p(n) = \nu_p(m)$.
    
    Choose any edge $e \in E(\Xi)$ and let $\Theta$ be a spanning tree of $\Xi$, oriented in such a way that $\Upsilon$ obtains a clockwise orientation and edges in $\Xi - \Upsilon$ are oriented away from $\Upsilon$. Let $\varepsilon = \varepsilon_{p, w}$ be the power counting function associated to the prime $p$ based at some vertex $w \in V(\Xi)$ with respect to the tree $\Theta$, as per (\ref{Eq::Epsilon}). As $n$ and $m$ are isocratic, the choice of vertex $w \in V(\Upsilon)$ only alters $\varepsilon_{p,w}$ by a constant, so we may choose $w \in V(\Xi)$ which minimises $\varepsilon_{p, w}$ and assume henceforth that $\varepsilon = \varepsilon_{p,w}$ is non-negative on all of $\Xi$. Denote $v = d_0(e)$ and $b = \nu_p(\yi_0(e))$. Let $C$ be the cyclic group of order $p^{l + \varepsilon(v) + b}$ and $Q = C \rtimes \operatorname{Aut}(C)$. We shall define a homomorphism $f \colon \G \to Q$ on the generators $A = \{a_z : z \in V(\Xi)\}$ and $\{t\}$ of $\G$, arguing on $A$ inductively on generators corresponding to the vertices of $\Xi$, ordered by their edge distance (in the tree $\Theta$) from $w$.
		
	For the base case, define the helper variable $c_w = 1$ and set $f(a_w) = (c_w, \operatorname{id}_C)$. For the inductive step, assume that we have defined $f(a_z)$ and $c_z$ on all generators $a_z$ indexed by vertices $z \in V(\Xi)$ with edge distance $d(z,w) = \tau > 0$. Given a vertex $y \in V(\Xi)$ with distance $d(y,w) = \tau + 1$, denote by $u \in E(\Theta)$ the unique edge incident at $y$ in the geodesic $[w,y] \subseteq \Theta$ and $z$ for its other endpoint. Write $\yi_y$ and $\yi_z$ for the inclusion indices of $u$ at the vertices $y$ and $z$, respectively. Moreover, write $\yi_y = p^{\nu_p(\yi_y)} \cdot  \alpha_y$ and $\yi_z = p^{\nu_p(\yi_z)} \cdot \alpha_z$ for integers $\alpha_y,\alpha_z$ coprime to $p$. Define
	\[
	   c_y = \alpha_y^{-1} \alpha_z c_z \in C
	\]
	and set \[
        f(a_y) = \left(p^{\varepsilon(y)} c_y \, , \, \operatorname{id}_C\right)
    \]
    which is well-defined as the induction started with the $\varepsilon$-minimal vertex $w$. Finally, consider the extremal vertices the extremal vertices $y^+, y^-$ in $\Theta \cap \Upsilon$ and write $\yi_-, \yi_+$ for the inclusion indices of the unique edge in $\Xi - \Theta$ at its respective endpoints. As $p\in\Ic(n, m)$, we must have $\varepsilon(y^-) + \nu_p(\yi_-) = \varepsilon(y^+) + \nu_p(\yi_{+})$ by (\ref{eq:epsilonFormula}), whence derives an equality of orders \[
    |p^{\varepsilon(y^+)}c_{y^+}|^{\yi_+} = |p^{\varepsilon(y^-)}c_{y^-}|^{\yi_-}
    \] in $C$. Hence, there is an isomorphism $\phi \in \operatorname{Aut}(C)$ satisfying
    \[
    \phi \colon (p^{\varepsilon(y^+)}c_{y^+})^{\yi_{+}} \mapsto (p^{\varepsilon(y^-)}c_{y^-})^{\yi_{-}}
    \]
    and we define $f(t) = (0,\phi)$. One verifies that the above definition preserve the relations of the group presentation in (\ref{Eq::CanonicalPresentation}) and hence extend to a homomorphism $f \colon \G \to Q$. Moreover, one finds that $f(\Chi)$ is cyclic of order $|C| / p^{\varepsilon(v) + b} = p^l$, as per (\ref{Eq::Order}). It follows that $\Chi$ has $p$-torsion quotients of arbitrary size and $K_p \cong \Z_p$.
\end{proof}
Regarding fiber groups of points on the cycle $\Upsilon \subseteq \Xi$, we may sharpen Lemma~\ref{Lem::Mini_Closure} to an isomorphism. We obtain the following description of the closure of fibers in the profinite completion $\Gh$, which shall prove an instrumental tool for determining the cohomology of $\Gh$.
\begin{proposition}\label{Prop::Closure}
    Let $\G = \PA$ be a generalised Baumslag--Solitar group over a cycle $\Xi$, let $\iota \colon \G \to \Gh$ be its profinite completion, and assume that the augmentation products $n = n(\Xi)$ and $m = m(\Xi)$ are isocratic. For any $x \in \Xi$, the profinite topology induced by $\G$ on the fiber $\Gs(x)$ is the full pro-$\Ic(n,m)$ topology, i.e. there is an isomorphism of profinite groups
	\[
	\overline{\iota(\Gs(x))}  \cong \Z_{\widehat{\Ic(n,m)}}
	\]
	which commutes with the canonical maps $\Gs(x) \to \overline{\iota(\Gs(x))}$ and $\Z \to \Z_{\Ic(n,m)}$.
	\end{proposition}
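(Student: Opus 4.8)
The plan is to prove the two containments of topologies separately. One direction is nearly free: applying Lemma~\ref{Lem::Mini_Closure} with $\Upsilon = \Xi$ gives, for every edge $e$, a canonical embedding $\Z_{\widehat{\Ic(n,m)}} \hookrightarrow \overline{\iota(\Gs(e))}$, equivalently that the Sylow pro-$p$ subgroup of $\overline{\iota(\Gs(e))}$ is a copy of $\Z_p$ for each $p \in \Ic(n,m)$. For a vertex $v$ I would pick an incident edge $e$; since $\Gs(e) \le \Gs(v)$ inside $\G$, taking closures gives $\overline{\iota(\Gs(e))} \le \overline{\iota(\Gs(v))}$, and as $\overline{\iota(\Gs(v))}$ is procyclic its pro-$p$ part is a procyclic pro-$p$ group containing $\Z_p$, hence equal to $\Z_p$, for each $p \in \Ic(n,m)$. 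So the real work is the opposite containment: for every $x \in \Xi$ and every prime $p \notin \Ic(n,m)$ I must show $\overline{\iota(\Gs(x))}$ has trivial Sylow pro-$p$ subgroup, i.e.\ that $\Gs(x)$ admits no non-trivial finite $p$-quotient induced from $\G$.

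For this I would argue directly with orders in finite quotients. Because $n$ and $m$ are isocratic, a prime outside $\Ic(n,m)$ divides exactly one of $n(\Xi)$, $m(\Xi)$; by the symmetry $\yi_0 \leftrightarrow \yi_1$ I may assume $p \mid n(\Xi)$ and $p \nmid m(\Xi)$, so (using the clockwise orientation of Figure~\ref{Fig:HEXAGON}) $\nu_p(\yi_1(e)) = 0$ for every edge $e$ while $\sum_{e} \nu_p(\yi_0(e)) = \nu_p(n(\Xi)) \ge 1$. Given any finite quotient $\phi\colon\G\to Q$, set $k_v = \nu_p(|\phi(a_v)|)$. For each edge $e$ the graph-of-groups relation makes $\phi(a_{d_0(e)})^{\pm\yi_0(e)}$ and $\phi(a_{d_1(e)})^{\pm\yi_1(e)}$ equal (for tree edges) or conjugate (for the unique HNN edge) in $Q$, hence of equal order, so their Sylow $p$-parts have equal order:
\[
\max\bigl(0,\,k_{d_0(e)} - \nu_p(\yi_0(e))\bigr) \;=\; \max\bigl(0,\,k_{d_1(e)} - \nu_p(\yi_1(e))\bigr) \;=\; k_{d_1(e)}.
\]
In particular $k_{d_1(e)} \le k_{d_0(e)}$ for every edge, so running once around the cycle forces $k_{v_1} \ge k_{v_2} \ge \dots \ge k_{v_s} \ge k_{v_1}$, whence all the $k_{v_i}$ equal a common value $k$. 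Substituting back, $k = \max(0,\,k - \nu_p(\yi_0(e_i)))$ for every $i$; if $k>0$ this forces $\nu_p(\yi_0(e_i))=0$ for all $i$, contradicting $\nu_p(n(\Xi))\ge 1$. Therefore $k=0$, i.e.\ $p \nmid |\phi(a_v)|$ for every vertex $v$ and every finite quotient $\phi$; so $\overline{\iota(\Gs(v))}$, being procyclic and topologically generated by $\iota(a_v)$, has trivial Sylow pro-$p$ subgroup, and so does $\overline{\iota(\Gs(e))} \le \overline{\iota(\Gs(d_0(e)))}$.

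Combining the two directions, $\overline{\iota(\Gs(x))}$ is a procyclic group whose Sylow pro-$p$ subgroup is $\Z_p$ for $p\in\Ic(n,m)$ and trivial otherwise, so the classification of procyclic groups identifies it with $\prod_{p\in\Ic(n,m)}\Z_p = \Z_{\widehat{\Ic(n,m)}}$; since this is the isomorphism realised by the dense subgroup $\iota(\Gs(x))$, it automatically commutes with the canonical maps from $\Z$ and from $\Gs(x)$. I expect the only delicate point to be the edge-by-edge order comparison and the ensuing monotonicity around the cycle: one has to track whether $k_{d_0(e)}$ exceeds $\nu_p(\yi_0(e))$, and to observe that the chain of inequalities collapses only because $\nu_p$ of one of the two augmentation products is identically zero. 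This is precisely what fails for a prime dividing both $n(\Xi)$ and $m(\Xi)$ in unequal powers, which is why the isocracy hypothesis is essential, and why the subtler bookkeeping via the functions $\varepsilon_{p,w}$ (relevant when $p$ divides both products with equal power, as in Lemma~\ref{Lem::Mini_Closure}) does not enter here.
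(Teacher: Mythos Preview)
Your proof is correct and follows essentially the same strategy as the paper's: invoke Lemma~\ref{Lem::Mini_Closure} for the lower bound, and for a prime $p \notin \Ic(n,m)$ use the conjugacy relations in finite quotients to force the $p$-part of each vertex image to vanish. The only cosmetic difference is that the paper collapses the whole cycle into the single relation $a_v^{n} \sim a_v^{\pm m}$ and argues once from that, whereas you chase the inequalities $k_{d_1(e)} \le k_{d_0(e)}$ edge by edge around the cycle to reach the same contradiction.
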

	\begin{proof}
		Choose a vertex $v \in V(\Xi)$ and an edge $e \in E(\Xi)$ such that $x \in \{e,v\}$ and $e$ is incident at $v$. Write $\Chi := \Gs(e) \leq \Gs(v) =: \Delta$, as well as $K := \overline{\iota(\Gs(e))} \leq \overline{\iota(\Gs(v))} =: H$. Using Lemma~\ref{Lem::Mini_Closure} and the classification of procyclic groups \cite[Proposition 2.3.8]{Bible}, we obtain a composition of maps
        \[
        \Z_{\widehat{\Ic(n,m)}} \cong \prod_{p \in \Ic(n,m)} \Z_p \hookrightarrow K \cong \prod_{p \, \mathrm{prime}} K_p \hookrightarrow H \cong \prod_{p \, \mathrm{prime}} H_p
        \]
        for some collection of procyclic pro-$p$ groups $K_p$ and $H_p$. This composition is canonical in the sense that the diagram
        \[
            \begin{tikzcd}
                \Z_{\widehat{\Ic(n,m)}} \arrow[r, hook] & K \\
                \Z \arrow[u, hook] \arrow[r, "\vartheta_v"] & \Delta \arrow[u]
            \end{tikzcd}
        \]
	commutes. Hence, it will suffice to show that $H_p = 1$ whenever $p \notin \Ic(n,m)$.
		Fix a prime $p \notin \Ic(n,m)$, so that $p$ divides exactly one of $n$ and $m$; w.l.o.g. $p$ divides $n$ and not $m$. It follows from the presentation of $\G$ in (\ref{Eq::CanonicalPresentation}) that $a_v^{n} = ta_v^{\pm m}t^{-1}$ holds in $\G$, where the sign depends on the sign of the edge inclusions. Then any quotient $f \colon \G \to Q$ with $f(\Delta) = C_p = C$ cyclic of order $p$ must have $f(a_v^{n}) = 0$, as $n \equiv 0$ modulo $p$. However, $m$ is coprime to $p$, so any such quotient must also satisfy $f(\langle a^{m}\rangle) = C$. However, this implies $f(t) C f(t^{-1}) =  f(\langle t a^{m} t^{-1}\rangle)  =  f(\langle a^{n}\rangle) = 1$, a contradiction. We conclude that $\Delta$ cannot have a non-trivial $p$-torsion quotient deriving from $\G$, and $H_p = 1$ holds as postulated.
	\end{proof}
 In the case of a Baumslag--Solitar group on coprime integers $n,m$, Proposition~\ref{Prop::Closure} together with Proposition~\ref{Prop::ComDiag} yields the following well-known example; we invite the reader to regard Proposition~\ref{Prop::Closure} as a broad generalisation thereof.
 \begin{example}\label{Ex::coprimeBS}
    Let $n,m \in \mathbb{Z}_{\geq 1}$ be coprime integers greater than 1. The profinite completion of the Baumslag--Solitar group $\G = \operatorname{BS}(n,m)$ is
    \[
    \Gh \cong \Lh \rtimes_{\widehat\psi} \pZ \cong \prod_{p \in \Ic(n,m)} \Z_p \rtimes_{\widehat\psi} \pZ
    \]
    where $\Lambda = \Z[\frac{1}{nm}]$ and the homomorphism $\widehat\psi \colon \pZ \to \operatorname{Aut}(\Lh)$ is generated by the multiplication automorphism $\widehat\psi(1)\colon x \mapsto \frac{m}{n} \cdot x$.
\end{example}
In the case of a GBS group $\G$ with coprime augmentation products $n$ and $m$, the conjunction of Lemma~\ref{Lem::Surj} and Proposition~\ref{Prop::Closure} will allow us to lift derivations $\G \to M$ to the profinite completion $\Gh$ whenever $M$ has order coprime to $nm$. The following lemma shows that it will suffice to consider such modules only.
\begin{lemma}\label{Lem::Vanishing}
    Let $\G = \PA$ be a generalised Baumslag--Solitar group over a cycle $\Xi$, and assume that the augmentation products $n = n(\Xi)$, $m = m(\Xi)$ are coprime. If $M$ is a finite $\G$-module whose order is the power of a prime $p$ which divides exactly one of $n$ or $m$, then $H^2(\G,M) = 0$.
\end{lemma}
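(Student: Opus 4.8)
The plan is to compute $H^2(\G,M)$ from the abstract Mayer--Vietoris sequence of Proposition~\ref{Prop::Mayer-Vietoris}. Since every fibre of $\ggx$ is infinite cyclic we have $H^2(\Gs(x),M)=0$ for all $x\in\Xi$, so the sequence ends in
\[
\bigoplus_{v\in V(\Xi)}H^1(\Gs(v),M)\xrightarrow{\hbar}\bigoplus_{e\in E(\Xi)}H^1(\Gs(e),M)\xrightarrow{\delta}H^2(\G,M)\to 0,
\]
and it suffices to show that $\hbar$ is surjective.

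The key preliminary step is to prove that \emph{every vertex generator acts on $M$ through an automorphism of order coprime to $nm$}. Orient $\Xi$ around the cycle as in Figure~\ref{Fig:HEXAGON}, write $\alpha_v$ for the action on $M$ of a generator of $\Gs(v)$, and let $\tau$ be the action of the stable letter attached to the unique edge outside a chosen spanning tree. The relations~(\ref{Eq::CanonicalPresentation}) say that for each edge $e$ the operators $\alpha_{d_0(e)}^{\yi_0(e)}$ and $\alpha_{d_1(e)}^{\yi_1(e)}$ on $M$ are conjugate (trivially when $e$ lies in the tree, by $\tau$ otherwise), hence of equal order. Fix a prime $q\mid n$; then $q\nmid m$, so $q\nmid\yi_1(e)$ for all $e$, and comparing $q$-adic valuations of orders along each edge gives
$
\nu_q(\operatorname{ord}\alpha_{d_1(e)})=\max\bigl(0,\ \nu_q(\operatorname{ord}\alpha_{d_0(e)})-\nu_q(\yi_0(e))\bigr).
$
Composing these relations all the way around the cycle and using $\sum_e\nu_q(\yi_0(e))=\nu_q(n)\geq 1$ forces $\nu_q(\operatorname{ord}\alpha_v)=\max(0,\nu_q(\operatorname{ord}\alpha_v)-\nu_q(n))$, hence $\nu_q(\operatorname{ord}\alpha_v)=0$, for every $v$; reading the cycle the other way handles the primes $q\mid m$. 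I expect this to be the main obstacle: it is elementary once set up, and it is precisely here that the coprimality of $n$ and $m$ enters.

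Granting this, I compute $\hbar$ explicitly. Because $\gcd(\operatorname{ord}\alpha_{d_0(e)},\yi_0(e))=\gcd(\operatorname{ord}\alpha_{d_1(e)},\yi_1(e))=1$, raising $\alpha_v$ to an incident inclusion index does not change the cyclic subgroup of $\operatorname{Aut}(M)$ it generates; hence all of the augmentation submodules $(\alpha_v-1)M$ agree, this common submodule is $\tau$-invariant, and each $H^1(\Gs(v),M)$ and $H^1(\Gs(e),M)$ is canonically identified with the finite $p$-group $C:=M/(\alpha_v-1)M$, on which every $\alpha_v$ acts trivially. Under these identifications the restriction maps become multiplication by the relevant inclusion index, so, after labelling the cycle $v_1,\dots,v_s$ and $e_1,\dots,e_s$, the formula for $\hbar$ in Proposition~\ref{Prop::Mayer-Vietoris} turns into the bidiagonal ``circulant'' map $C^{s}\to C^{s}$ sending $(\mu_i)_i$ to $\bigl(\yi_1(e_i)\mu_{i+1}-\yi_0(e_i)\mu_i\bigr)_{i<s}$ in its first $s-1$ coordinates and to $\yi_1(e_s)\mu_1-\yi_0(e_s)\tau\mu_s$ in the last. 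After possibly reversing the orientation of $\Xi$ we may assume $p\mid n$, so each $\yi_1(e_i)$ is invertible on the $p$-group $C$; back-substituting through the first $s-1$ coordinates then reduces the surjectivity of $\hbar$ to that of the single operator $m\cdot\operatorname{id}-n\tau$ on $C$. But exactly one of $m,n$ is divisible by $p$, so of the two commuting summands $m\cdot\operatorname{id}$ and $-n\tau$ one is a bijection of $C$ while the other maps $C$ into $pC$ and is thus nilpotent; their sum is therefore a bijection. Hence $\hbar$ is surjective and $H^2(\G,M)=\operatorname{coker}(\hbar)=0$. Routine points to check along the way are that $H^2$ of an infinite cyclic group vanishes with arbitrary coefficients (so the Mayer--Vietoris sequence ends as claimed), that the $\tau$-conjugation in the last coordinate really does descend to $C$, and the degenerate case $s=1$, i.e.\ $\G=\operatorname{BS}(\yi_0(e_1),\yi_1(e_1))$, in which the circulant picture collapses directly to $\hbar=m\cdot\operatorname{id}-n\tau$.
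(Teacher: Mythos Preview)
Your argument is correct and follows a genuinely different, more self-contained route than the paper. Both proofs reduce via Mayer--Vietoris to surjectivity of $\hbar$ and both compute $\hbar$ as a bidiagonal matrix with entries $\pm\yi_j(e_i)$ (plus the $\tau$-twist in the HNN coordinate). The differences lie in two places.

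First, to show that each vertex generator acts on $M$ with order coprime to $nm$, the paper invokes Proposition~\ref{Prop::Closure}, a profinite statement about the induced topology on fibres. Your valuation argument along the cycle is elementary and internal to the abstract group, which makes this lemma logically independent of the profinite analysis.

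Second, the paper treats only the base case $M\cong\mathbb{F}_p^l$ directly (computing $\det A_\hbar\equiv \pm n\bmod p$) and then inducts on the exponent of $M$ via the Frattini subgroup and the long exact sequence. You handle an arbitrary $p$-power module in one pass: back-substitution through the $s-1$ tree coordinates collapses the question to surjectivity of the single operator $m\cdot\mathrm{id}-n\tau$ on $C$, and the ``unit plus nilpotent'' observation finishes it. This avoids the induction entirely and is cleaner; the paper's determinant argument only makes sense over a field, which is why it needs the Frattini step.

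Two small remarks. Your restriction maps are really multiplication by $\pm\yi_j(e)$ (depending on the sign of the edge inclusion), not literally $\yi_j(e)$; since $-1$ is a unit this is harmless for the back-substitution and the final operator becomes $\pm m\cdot\mathrm{id}\pm n\tau$, still unit plus nilpotent. And the $s=1$ case you flag does indeed collapse immediately to that operator, so no separate treatment is needed.
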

\begin{proof} 
    We may assume without loss of generality that $p$ divides $m$ and not $n$, and that $p$ in fact divides $\yi_0(e_s)$. We may also assume that every edge $e\in E(\Xi)$ has distinct endpoints. Label the edges and vertices of $\Xi$ as in Figure~\ref{Fig:HEXAGON}, and take the generating set for $\G$ given in the standard presentation (\ref{Eq::CanonicalPresentation}).
    
    We proceed by induction on $k$, where $p^k$ is the largest order of an element of $M$ when considered as an abelian group. For the base case let $k=1$, so $M\cong\Fp^l$ for some natural number $l$. By Proposition~\ref{Prop::Mayer-Vietoris} it suffices to show that the map
    \[
    \hbar \colon \bigoplus_{v \in V(\Xi)} H^1(\Gs(v),M) \to \bigoplus_{e \in E(\Xi)} H^1(\Gs(e),M)
    \]
    induced by the map on derivations given by
    \[
    \hbar\left((f)_{v \in V(\Xi)} \right) \colon \left((x_e)_{e \in E(\Xi)} \right) \mapsto \left(f_{d_1(e)}(\partial_1(x_e)) - t_e \cdot f_{d_0(e)}(\partial_0(x_e)) \right)
    \]
    is surjective. Let $\phi\colon\G\rightarrow \operatorname{Aut}(M)$ be the $\Gamma$-module structure on $M$. For each $v_i\in V(\Xi)$ the $\langle a_i\rangle$-module structure on $M$ is determined by the image $\phi(\langle a_i\rangle)$, which will be a cyclic group of order not divisible by any factor of $nm$ using Proposition~\ref{Prop::Closure}. It follows that taking $n_{i-1}^{\mathrm{th}}$ or $m_{i}^{\mathrm{th}}$ powers of elements of $\phi(\langle a_i\rangle)$ represents an automorphism for all $1\leq i\leq s$, so 
    \[\phi(\langle a_i\rangle)=\phi(\langle a_i^{n_{i-1}}\rangle)=\phi(\langle a_i^{m_{i}}\rangle)\]
    and the induced $\mathcal{G}(v_i)$-module structure on $M$ is (non-canonically) isomorphic to the induced $\mathcal{G}(e_i)$ and $\mathcal{G}(e_{i-1})$-module structures for all $1\leq i\leq s$. Thus, for all vertices $v_i\in V(\Xi)$, 
    \[H^1(\Gs(v_i),M)\cong H^1(\Gs(e_i),M)\cong H^1(\Gs(e_{i-1}),M)\cong M/(a_i-1)M\]
    using that each vertex and edge group is isomorphic to $\Z$, and we may view each vertex and edge group as isomorphic to $M/(a_1-1)M$ where convenient by connectivity of $\Xi$.
    
    Given subsets $U\subseteq E(\Xi)$ and $X\subseteq V(\Xi)$, we define the map $\hbar_{X\to U}$ as the composition $\operatorname{pr}_U\circ \hbar$ restricted to the domain $\bigoplus_{v \in X} H^1(\Gs(v),M)$, where \[\operatorname{pr}_U\colon \bigoplus_{e \in E(\Xi)} H^1(\Gs(e),M)\rightarrow \bigoplus_{e \in U} H^1(\Gs(e),M)\] is the natural projection. Let $v_i\in V(\Xi)$, $e_j\in E(\Xi)$, let $b_j\in \Gs(e_j)\leq\Gs(v_j)$ be the generator of $\Gs(e_j)$ identified with $a_j^{n_j}$ in $\G$ and let $f\colon \Gs(v_i)\rightarrow M$ be a derivation. Write $x = f(a_i) \in M$, which determines $f$ uniquely. On the level of derivations, $\hbar_{v_i\to e_j}$ is generated by
    \[\hbar_{v_i\to e_j}(f)(b_j)=\begin{cases}
        -t_{e_j}f(a_i^{m_j}), & v_i=d_0(e_j) \\
        f(a_i^{n_j}), & v_i=d_1(e_j) \\
        0 & \text{else}
    \end{cases}\]
    recalling that no edge in $\Xi$ is a loop, and that any derivation $\Z\rightarrow M$ is entirely defined by the image of a generator. Assume first that $m_j, n_j>0$. Then, by the derivation law,
    \begin{align*}
    f(a_i^{m_j})&=f(a_i)+a_i\cdot f\left(a_i^{(m_j-1)}\right)\\
    &= \ldots \\
    &=\left(1+a_i+\ldots+a_i^{(m_j-1)}\right)\cdot x
    \end{align*}
    and a similar formula holds for $n_j$. Noting additionally that $a_i$ acts trivially on $M/(a_i-1)M$ and $t_e=_\G 1$ for all $j\neq s$, we thus obtain
    \[\hbar_{v_i\to e_j}(x)=\begin{cases}
        -tm_j\cdot x, & v_i=d_0(e_j)\text{ and } j=s \\
        -m_j\cdot x, & v_i=d_0(e_j)\text{ and } j\neq s \\
        n_j\cdot x, & v_i=d_1(e_j) \\
        0 & \text{else}
    \end{cases}\]
    for any $x \in M/(a_j - 1) \cong H^1(\Gs(e_j),M)$, where $t:=t_s$ as in the presentation (\ref{Eq::CanonicalPresentation}). The case where $m_j<0$ or $n_j<0$ is similar, as, for instance, $m_j<0$ gives 
    \[f(a_i^{m_j})=- (a_i^{m_j}+a_i^{m_j+1}+...+a_i^{-1})x\]
    which contains the same number of terms. We thus obtain the following formula for $\hbar_{v_i\to e_j}$ in the general case:
    \begin{equation}\hbar_{v_i\to e_i}(x)=\begin{cases}
        \mp t(\yi_0(e_j))\cdot x, & v_i=d_0(e_j)\text{ and } j=s \\
        \mp \yi_0(e_j)\cdot x, & v_i=d_0(e_j)\text{ and } j\neq s \\
        \pm \yi_1(e_j)\cdot x, & v_i=d_1(e_j) \\
        0 & \text{else}
    \end{cases}\end{equation}
    where the sing depends on the sign of $m_j$ and $n_j$. It follows from this and the observation that all edge and vertex cohomologies are isomorphic to $M/(a_1-1)M$ for all $i$ that $\hbar$ is given by the matrix 
    \[A_{\hbar}=\begin{pmatrix}
        \mp\yi_0(e_1) & \pm\yi_1(e_1) & 0 &\ldots& 0 & 0\\
        0 & \mp\yi_0(e_2) & \pm\yi_1(e_2)  &\ldots& 0 & 0\\
        0 & 0 & \mp\yi_0(e_3) &\ldots& \pm\yi_1(e_{s-2}) & 0\\
        \vdots & \vdots & \vdots &\ddots& \mp\yi_0(e_{s-1}) & \pm\yi_1(e_{s-1}) \\
        \pm\yi_1(e_s) & 0 & 0 & 0 & 0 & \mp t(\yi_0(e_s))        
    \end{pmatrix}\]
    over $\Z\G$. By assumption, $p$ divides $\yi_0(e_s)$, so multiplication by $\yi_0(e_s)$ represents the zero map in $M$ or any quotient thereof. It follows that the $\Fp$-linear transformation $\bigoplus_{v \in V(\Xi)} H^1(\Gs(v),M) \xrightarrow{A_\hbar} \bigoplus_{e \in E(\Xi)} H^1(\Gs(e),M)$ has determinant given by the class $n = \pm \prod_{1\leq i\leq s}\yi_1(e_i) \mod p$, which is non-zero as $\gcd(n,p) = 1$. We infer that $\hbar$ is surjective, whence $H^2(\G, M) = 0$, as required.
    
    For the inductive step, let $k>1$ be an integer and assume that $H^2(\G,M) = 0$ whenever $M$ is a finite $\G$-module such that the order of any element in $M$ divides $p^k$. Let $M$ be a $\G$-module with order $|M| = p^{k+1}$, and let $\Phi(M)$ be the Frattini subgroup of $M$. As $\Phi(M)$ is a characteristic subgroup, it is also a $\G$-submodule. Hence we obtain the short exact sequence
    \[0\rightarrow \Phi(M)\rightarrow M\rightarrow M/\Phi(M)\rightarrow 0\]
    of $\G$-modules, which induces a long exact sequence of cohomology groups, including the fragment
    \[\ldots\rightarrow H^2(\G, \Phi(M))\rightarrow H^2(\G, M)\rightarrow H^2(\G, M/\Phi(M))\rightarrow\ldots \]
    in dimension 2. By construction, elements in $\Phi(M)$ and $M/\Phi(M)$ have order at most $p^k$, so the inductive hypothesis yields $H^2(\G, \Phi(M)) = H^2(\G, M/\Phi(M))= 0$. By exactness, it follows that $H^2(\G, M)=0$ holds as well. We conclude that $H^2(\G,M) = 0$ for all $\G$-modules $M$ whose order is a power of $p$, as postulated.
\end{proof}
The following allows one to derive cohomological separability from the conjunction of Proposition~\ref{Prop::Closure} and Lemma~\ref{Lem::Vanishing}.
\begin{proposition}\label{Prop::PositiveCase}
   Let $\G = \PA$ be a generalised Baumslag--Solitar group. If there is a collection of primes $\Ic$ satisfying:
   \begin{enumerate}[(a)]
       \item for any edge $e \in E(\Xi)$, the profinite topology induced by $\G$ on the edge group $\Gs(e)$ is its full pro-$\Ic$ topology, and
       \item if $M$ is a finite $\G$-module whose order is coprime to $\Ic$ then $H^2(\G,M) = 0$,
   \end{enumerate}
   then $\G$ has separable cohomology.
\end{proposition}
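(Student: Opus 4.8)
The plan is to invoke Lemma~\ref{Lem::Surjections}, so that it suffices to prove the comparison map $\iota^* \colon H^n(\Gh, M) \to H^n(\G, M)$ is surjective for every finite $\G$-module $M$ and every $n \geq 0$. Since $\G$ is the fundamental group of a graph of infinite cyclic (in particular free) groups, Corollary~\ref{Cor::CohomDim} gives $\cd(\G) \leq 2$, so $H^n(\G, M) = 0$ for $n \geq 3$ and surjectivity is automatic in those degrees. In degree $0$ the map $\iota^*$ is an isomorphism, because the $\G$- and $\Gh$-actions on the finite module $M$ have the same fixed points (the action factors through a common finite quotient); in degree $1$ surjectivity is exactly Lemma~\ref{Lem::DimOne}. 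Hence everything reduces to showing that $\iota^* \colon H^2(\Gh, M) \to H^2(\G, M)$ is surjective for every finite $\G$-module $M$.

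Next I would reduce to the case where $M$ is a $p$-group. Writing $M = \bigoplus_p M_p$ as the direct sum of its $p$-primary components, each $M_p$ is a characteristic subgroup of $M$ and hence a $\G$-submodule, so $H^2(\G, -)$ and $H^2(\Gh, -)$ both split as direct sums along this decomposition and $\iota^*$ respects the splitting. So assume $|M| = p^k$. If $p \notin \Ic$, then $|M|$ is coprime to $\Ic$, and hypothesis (b) immediately gives $H^2(\G, M) = 0$, leaving nothing to prove. This leaves the case $p \in \Ic$.

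For $p \in \Ic$ I would run the commutative ladder of Mayer--Vietoris sequences supplied by Proposition~\ref{Prop::ComDiag}. In the right-hand ($\G$-)column the vertex groups $\Gs(v) \cong \Z$ have cohomological dimension $1$, so $\bigoplus_{v \in V(\Xi)} H^2(\Gs(v), M) = 0$; by exactness (the map $i^*$ out of $H^2(\G,M)$ is therefore zero) the boundary map $\delta_{\G} \colon \bigoplus_{e \in E(\Xi)} H^1(\Gs(e), M) \to H^2(\G, M)$ is surjective. On the other hand, hypothesis (a) asserts that $\G$ induces the full pro-$\Ic$ topology on every edge group, so Lemma~\ref{Lem::Surj} (applied with $\Sc = \Ic$, noting that $|M| = p^k$ is a product of primes in $\Ic$) shows that $\mu_E \colon \bigoplus_{e \in E(\Xi)} H^1(\Gsc(e), M) \to \bigoplus_{e \in E(\Xi)} H^1(\Gs(e), M)$ is surjective. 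Commutativity of the bottom square of the ladder gives $\iota^* \circ \delta_{\Gh} = \delta_{\G} \circ \mu_E$, whose right-hand side is a composite of surjections and hence surjective; consequently the image of $\iota^*$ contains all of $H^2(\G, M)$, so $\iota^*$ is surjective, as needed.

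The cohomological bookkeeping here is light once the machinery is in place, so the genuine work lies in the earlier ingredients --- the compatible Mayer--Vietoris ladder of Proposition~\ref{Prop::ComDiag}, the derivation-lifting result Lemma~\ref{Lem::Surj}, and (in concrete applications) the verification of hypotheses (a) and (b). Within this proposition the one point that requires care is the interplay of (a) and (b): hypothesis (a) only licenses Lemma~\ref{Lem::Surj} for $\Ic$-primary coefficients, and it is precisely hypothesis (b) that disposes of the remaining primes by forcing $H^2(\G, M)$ to vanish, so the argument must be organised around the primary decomposition of $M$ rather than attempted uniformly.
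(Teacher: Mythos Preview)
Your proof is correct and follows essentially the same approach as the paper: reduce via Lemma~\ref{Lem::Surjections}, Lemma~\ref{Lem::DimOne}, and Corollary~\ref{Cor::CohomDim} to surjectivity of $\iota^*$ in degree~$2$, split $M$ into $p$-primary components, dispose of primes $p \notin \Ic$ via hypothesis~(b), and for $p \in \Ic$ use the Mayer--Vietoris ladder of Proposition~\ref{Prop::ComDiag} together with Lemma~\ref{Lem::Surj} to conclude. Your write-up is slightly more explicit than the paper's in justifying why $\delta_\G$ is surjective (via $\cd(\Z)=1$), but the argument is the same.
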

\begin{proof}
    Combining Corollary \ref{Cor::CohomDim} with Lemmata \ref{Lem::DimOne} and \ref{Lem::Surjections}, we observe that it will be sufficient to show the profinite completion map $\iota \colon \G \to \Gh$ induces a surjection
    \[
        \iota^* \colon H^2(\Gh,M) \twoheadrightarrow H^2(\G, M)
    \]
    on cohomology in dimension 2. Let $M$ be a finite $\G$-module, which decomposes as a direct sum of its Sylow-$p$ subgroups
    $
    M = \bigoplus_{p \, \mathrm{prime}} M_p
    $
    which is also a direct sum of $\G$-submodules as each $M_p$ is uniquely determined by its order and hence invariant under the action of $\G$. Observe that
    \[
    H^2(\G,M) = \bigoplus_{p \in \Ic} H^2(\G, M_p)
    \]
    by assumption (b) and the fact that cohomology commutes with finite direct sums of modules. We may thus reduce to the case where the order $M$ is the power of a prime in $\Ic$. Now Proposition~\ref{Prop::ComDiag} yields a commutative diagram
    \[\begin{tikzcd}
        {\bigoplus_{e \in E(\Xi)} H^1(\Gsc(e),M)} \arrow[d, "\delta"] \arrow[rr, "\mu_E"] && {\bigoplus_{e \in E(\Xi)} H^1(\Gs(e),M)} \arrow[d, "\delta"] \\
        {H^{2}(\Gh,M)} \arrow[d]\arrow[rr, "\iota^*"]                                 && {H^{2}(\G,M)} \arrow[d]                    \\
        0 && 0
        \end{tikzcd}
        \]
    with exact columns. Given assumption (a), we may appeal to Lemma~\ref{Lem::Surj} to find that $\mu_{E}$ is an epimorphism. Hence $\iota^* \delta = \delta \mu_E$ is an epimorphism as well, and so is $\iota^*$. This completes the proof.
\end{proof}
This contrasts with the case of a GBS group with isocratic but not coprime augmentation products. There, one can construct an explicit module witnessing the failure of cohomological separability: see Corollary~\ref{Cor::IsocraticNotCoprime} below. We outsource most of the proof to the following lemma, which also allows for a characterisation of the cohomological dimension of $\Gh$ in the isocratic case: see Corollary~\ref{Cor::ProfCohomDim} below.
\begin{lemma}\label{Lem::FunnyModule}
    Let $\G = \PA$ be a generalised Baumslag--Solitar group over a cycle $\Xi$ whose augmentation products $n = n(\Xi)$ and $m = m(\Xi)$ are isocratic but not coprime. Let $p$ be a prime dividing $nm$ and $q$ be a prime (possibly $q = p$) dividing $\gcd(n,m)$. There is a $\G$-module structure on $M = \Fp^q$ such that $H^2(\G,M) \neq 0$ holds. If, additionally, $p$ divides $\gcd(n,m)$, then $H^2(\Gh,M) \neq 0$ holds as well.
\end{lemma}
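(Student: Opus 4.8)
The plan is to realise both $H^2(\G,M)$ and $H^2(\Gh,M)$ as cokernels of Mayer--Vietoris maps and then to choose $M$ so that these cokernels are visibly nonzero for dimension reasons. Since each vertex group $\Gs(v)$ is infinite cyclic, $H^2(\Gs(v),M)=0$, so Proposition~\ref{Prop::Mayer-Vietoris} gives $H^2(\G,M)=\operatorname{coker}\bigl(\hbar\colon\bigoplus_{v}H^1(\Gs(v),M)\to\bigoplus_{e}H^1(\Gs(e),M)\bigr)$. Under the isocracy hypothesis, Proposition~\ref{Prop::Closure} identifies each $\Gsc(x)$ with the procyclic torsion-free group $\Z_{\widehat{\Ic(n,m)}}$, so likewise $H^2(\Gsc(v),M)=0$, and Proposition~\ref{Prop::ComDiag} gives $H^2(\Gh,M)=\operatorname{coker}(\hbar_{\mathrm{prof}})$ in the same way.

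Next I would construct the module. Fix a prime $q\mid\gcd(n,m)$. Since $q\mid n(\Xi)=\prod_{e}\yi_0(e)$, some edge, say $e_{i_0}$ with $d_0(e_{i_0})=v_{i_0}$, satisfies $q\mid\yi_0(e_{i_0})$. The key sub-step is to produce a homomorphism $\phi\colon\G\to\Z/q$ with $\phi(a_{i_0})\neq 0$. Homomorphisms $\G\to\Z/q$ killing $t$ correspond to homomorphisms $\operatorname{coker}(D)\to\Z/q$, where $D$ is the $s\times s$ integer matrix of the abelianised relations of \eqref{Eq::CanonicalPresentation}, with $\det D=\pm(n(\Xi)-m(\Xi))$ (the signs of the inclusion maps being irrelevant here); so it suffices that $a_{i_0}$ have nonzero image in $\operatorname{coker}(D)\otimes\mathbb{F}_q=\operatorname{coker}(\bar D)$. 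As $q\mid\gcd(n,m)\mid n(\Xi)-m(\Xi)$ we have $\det\bar D=0$, so $W:=\Img(\bar D)$ is a proper subspace of $\mathbb{F}_q^{s}$; and modulo $W$ the abelianised relations of \eqref{Eq::CanonicalPresentation} say that consecutive generator-classes around the cycle are proportional with ratio $\yi_1(e)/\yi_0(e)$ whenever $\yi_0(e)$ is prime to $q$. If the class of $a_{i_0}$ vanished in $\mathbb{F}_q^{s}/W$, then propagating this vanishing around the connected graph $\Xi$ — along the vertices where $\yi_0$ is prime to $q$, and using the assumed vanishing at the nonempty set of vertices where $q\mid\yi_0$ — would force every generator-class to vanish, that is $W=\mathbb{F}_q^{s}$, a contradiction. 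So $\phi$ exists; let $M=\Fp^{q}$, regarded as the group algebra $\Fp[\Z/q]$, with $\G$ acting through $\phi$ and the regular representation of $\Z/q$, so that each $a_i$ acts through $\phi(a_i)$ as an element of $\mathrm{GL}_q(\Fp)$ of order $q$ or $1$, irrespective of whether $q=p$.

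The payoff is a dimension count. For $\Z=\langle\omega\rangle$ acting on $\Fp[\Z/q]$ through $\omega\mapsto d\in\Z/q$, the group $H^1(\Z,M)=M/(\omega-1)M$ has $\Fp$-dimension $q$ if $d=0$ and $1$ if $d\neq 0$ (for $d\neq 0$ the element $y^{d}-1$ generates the same ideal of $\Fp[y]/(y^{q}-1)$ as $y-1$, of colength $1$). Applying this to the vertex groups (generator acting through $\phi(a_i)$) and the edge groups (generator acting through $\phi(a_i)\yi_0(e_i)$), the quantity $\sum_{e}\dim_{\Fp}H^1(\Gs(e),M)-\sum_{v}\dim_{\Fp}H^1(\Gs(v),M)$ equals $(q-1)$ times the number of $i$ with $\phi(a_i)\neq 0$ and $q\mid\yi_0(e_i)$, hence is at least $q-1>0$ because $i_0$ is such an $i$. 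Thus the target of $\hbar$ has strictly larger $\Fp$-dimension than its source, so $\operatorname{coker}(\hbar)\neq 0$ and $H^2(\G,M)\neq 0$. Finally, if $p\mid\gcd(n,m)$ then isocracy (Definition~\ref{Def::Iso}) forces $p,q\in\Ic(n,m)$, so each $\Gsc(x)\cong\Z_{\widehat{\Ic(n,m)}}$ carries both a $\Z_p$- and a $\Z_q$-direction; a short Lyndon--Hochschild--Serre argument, using Lemma~\ref{Lem::CoprimeCohomologyVanishes} to kill the prime-to-$p$ cohomology, shows that $H^1(\Gsc(v),M)$ and $H^1(\Gsc(e),M)$ have exactly the same $\Fp$-dimensions as their abstract analogues, so the identical count yields $\operatorname{coker}(\hbar_{\mathrm{prof}})\neq 0$ and $H^2(\Gh,M)\neq 0$.

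I expect the main obstacle to be the existence of $\phi$ — the propagation argument around the cycle, which is where the hypotheses $q\mid n(\Xi)$ and $q\mid n(\Xi)-m(\Xi)$ combine — together with the bookkeeping in the last step, which is precisely where isocracy is needed (to ensure $p,q\in\Ic(n,m)$, hence that the profinite $H^1$-dimensions match the abstract ones); the remainder is the formal Mayer--Vietoris machinery already established in Propositions~\ref{Prop::Mayer-Vietoris} and~\ref{Prop::ComDiag}.
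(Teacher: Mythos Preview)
Your argument follows the paper's strategy: realise $H^2$ as the cokernel of the Mayer--Vietoris map $\hbar$, let $\G$ act on $M=\Fp^q$ through a homomorphism $\phi\colon\G\to\Z/q$ (the paper's cyclic permutation $\alpha$ generates exactly such a $\Z/q$ inside $\Aut(M)$), and show $\hbar$ cannot surject by an $\Fp$-dimension count. The difference lies in how $\phi$ is produced. The paper specifies $\phi(a_i)=1$ precisely on the set $W$ of vertices minimising the function $\varepsilon_q$ of \eqref{Eq::Epsilon} and checks directly that the relations of \eqref{Eq::CanonicalPresentation} are respected; you instead extract $\phi$ from the cokernel of the relation matrix $\bar D$ over $\mathbb{F}_q$. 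Your route avoids the $\varepsilon$-bookkeeping, but one step needs tightening: as written you fix a single $i_0$ with $q\mid\yi_0(e_{i_0})$ and then argue $[a_{i_0}]\neq 0$, yet the propagation around the cycle only contradicts the \emph{simultaneous} vanishing of $[a_i]$ at every index with $q\mid\yi_0(e_i)$ --- you need the assumed vanishing at each such index to pass the backward step where $q$ divides $\yi_0(e_{i-1})$. So first run the propagation to deduce that \emph{some} index in $\{i:q\mid\yi_0(e_i)\}$ has nonzero class, and only then name it $i_0$. With that adjustment the dimension count and the profinite step (via Proposition~\ref{Prop::Closure} together with your Lyndon--Hochschild--Serre reduction, or equivalently Corollary~\ref{Cor::ContDer}) go through exactly as you describe.
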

\begin{proof}
    Consider the presentation of $\Gamma$ given in (\ref{Eq::CanonicalPresentation}) along with the standard spanning tree $\Theta$ of $\Xi$, as well as the function $\varepsilon_{q, v}$ associated to the prime $q$ defined in (\ref{Eq::Epsilon}), with respect to the tree $\Theta$ and some vertex $v\in V(\Xi)$. Let $W \subseteq V(\Xi)$ be the subset of vertices which minimise the function $\varepsilon_{q, v}$, which is independent of $v$, and $Y \subseteq E(\Xi)$ be the set of those edges $e$ whose initial endpoint $v = d_0(e)$ lies in $W$ and such that $q$ divides the inclusion index $\yi_0(e)$. As $q$ divides both $n$ and $m$, and $\varepsilon_{q, v}$ increases at those edges whose initial inclusion index is divisible by $q$, we must have $Y \neq \emptyset$. Choose a basis $B$ for $M$ as a $\Fp$-vector space, and let $\alpha \in \Aut(M)$ be the automorphism induced by a cyclic permutation of $B$. Define $\phi \colon \G \to \Aut(M)$ as the map generated by
   \[
   \phi(a_i) = 
   \begin{cases}
        \alpha, & v_i \in W \\
        \operatorname{id}_M, & v_i \notin W
   \end{cases}
   \]
   and $\phi(t) = \operatorname{id}_M$, which, as one may verify using (\ref{Eq::CanonicalPresentation}), extends to a homomorphism $\phi \colon \G \to \Aut(M)$. Then $M$ forms a finite $\G$-module with action via $\phi$. We shall demonstrate that $M$ satisfies the postulated condition on cohomology.

   Recall the portion 
   \begin{equation}\label{Eq::Mayer--Vietoris--Portion}
       \ldots \to \bigoplus_{v \in V(\Xi)} H^1(\Gs(v),M) \xrightarrow{\hbar} \bigoplus_{e \in E(\Xi)} H^1(\Gs(e),M)  \xrightarrow{\delta} H^2(G,M) \to 0
   \end{equation}
   of the Mayer--Vietoris sequence given in Proposition~\ref{Prop::Mayer-Vietoris}, where the rightmost term vanishes as $\cd(\Z) = 1$. Given a vertex $v \in V(\Xi)$ with corresponding generator $a$, the $\Fp$-linear transformation of $M$ defined by $(a - 1)$ is the zero map if and only if $v \notin W$. Thus  for all $v\in W$ the inequation
   \[
   \dim_{\Fp}(H^1(\Gs(v),M)) = \dim_{\Fp}\left(\frac{M}{(a-1)M}\right) < \dim_{\Fp}(M) = q
   \]
   holds. On the other hand, suppose that $e \in E(\Xi)$ is an edge; write $v = d_0(e)$ for its initial endpoint, as well as $a,b$ for the generators of the groups $\Gs(v)$ and $\Gs(e)$, respectively. If $e \in Y$ then $b$ is conjugate to a $q$-power of some $a$, and hence the action of $b-1$ on $M$ is the zero map. It follows that
   \[
   \dim_{\Fp} \left( \frac{M}{(b-1)M} \right) = q\] holds whenever $e \in Y$. Conversely, we claim that there is an equality $\operatorname{rk}_{\Fp}(b-1) = \operatorname{rk}_{\Fp}(a-1)$ of ranks of $\Fp$-linear transformations of $M$ whenever $e \notin Y$. Indeed, if $v \notin W$, then $a$ acts trivially on $M$, and so does $b \sim a^{\yi_0(e)}$. If $v \in W$ but $\yi_0(e)$ is coprime to $q$, then $b$ is conjugate to $a^{\yi_0(e)}$ and the subgroups of $\alpha(\G)$ generated by $a$ and $a^{\yi_0(e)}$ are equal. Therefore, $a$ and $b$ have equal rank as $\Fp$-linear transformations of $M$, and so do the linear transformations $a-1$ and $b-1$. Putting everything together, we obtain
   \begin{align*}
   	\dim_{\Fp}&\left(\bigoplus_{e \in E(\Xi)} H^1(\Gs(e),M)\right) - \dim_{\Fp}\left(\bigoplus_{v \in V(\Xi)} H^1(\Gs(v),M)\right)= \\
   	&= \sum_{i = 1}^s \left[\dim_{\Fp}\left(\frac{M}{(b_i - 1)M} \right) - \dim_{\Fp}\left(\frac{M}{(a_i - 1)M}\right)\right] \\
   	&\geq \sum_{v_i \in Y} [q - (q - 1)] \\
   	&> 0
   \end{align*}
	where the final inequality holds as $Y \neq \emptyset$. It follows that $\hbar$ cannot be a surjection and $H^2(\G,M) \neq 0$. For the final statement, assume that $p$ divides $\gcd(n,m)$. Then Proposition~\ref{Prop::Closure} states that maps from the closures of vertex and edge groups in $\Gh$ to $M$ factor through the pro-$\{p,q\}$ completions of $\Z$. Hence the above argument holds analogously\footnote{Alternatively, the inclined reader may verify that the the analogous statement to Proposition~\ref{Prop::ComDiag} holds in any pro-$\pi$ category and that cohomological separability of $\Z$ implies cohomological separability of $\G$ in the pro-$\pi$ category whenever $\pi \subseteq \Ic(n,m)$, as per Proposition~\ref{Prop::Closure}. Writing $\pi = \{p,q\}$, one then obtains $H^2(\Gh,M) \cong H^2(\G_{\widehat{\pi}},M) \cong H^2(\G,M)$, where the first isomorphism is induced on cohomology by the canonical isomorphism of the pro-$\pi$ completions of $\G$ and $\Gh$.} for the induced $\Gh$-module $M$, and we find that $H^2(\Gh,M) \cong H^2(\G,M) \neq 0$.
\end{proof}
\begin{corollary}
		\label{Cor::ProfCohomDim} If $\G$ is a generalised Baumslag--Solitar group over an isocratic cycle then $\cd(\G) = \cd(\Gh) = 2$.
	\end{corollary}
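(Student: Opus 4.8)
The plan is to bound $\cd(\G)$ and $\cd(\Gh)$ both above and below by $2$. Throughout write $n = n(\Xi)$ and $m = m(\Xi)$, which are isocratic by hypothesis. For the upper bounds: $\cd(\G) \le 2$ is immediate from Corollary~\ref{Cor::CohomDim}, since $\G$ is the fundamental group of a graph of infinite cyclic (hence free) groups. For $\Gh = \PP[\ggc]$ (Proposition~\ref{Prop::ComDiag}), Proposition~\ref{Prop::Closure} identifies the closure $\Gsc(x) = \overline{\iota(\Gs(x))}$ of every fibre with the procyclic group $\Z_{\widehat{\Ic(n,m)}}$, whose second cohomology with any discrete torsion coefficients vanishes; feeding this into the profinite Mayer--Vietoris sequence of Proposition~\ref{Prop::Mayer-Vietoris} kills $H^k(\Gh,M)$ for all $k \ge 3$ and all discrete torsion $\Gh$-modules $M$, so $\cd(\Gh) \le 2$.

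For the lower bounds I would construct one finite module that simultaneously witnesses $H^2(\G,-) \ne 0$ and $H^2(\Gh,-) \ne 0$. Fix a prime $p$ dividing neither $n$ nor $m$ (such $p$ exists, as $nm$ is a fixed positive integer); then $p \in \Ic(n,m)$. Put $M := \Fp$ and give it the $\G$-module structure sending every vertex generator $a_i$ of the canonical presentation~(\ref{Eq::CanonicalPresentation}) to $\operatorname{id}_M$ and the stable letter $t$ to multiplication by the unit $u \in \Fp^\times$, where $u$ is the reduction modulo $p$ of the signed ratio $\bigl(\prod_i m_i\bigr)\bigl(\prod_i n_i\bigr)^{-1}$ --- a unit because $p \nmid nm$. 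Since the only relation of~(\ref{Eq::CanonicalPresentation}) involving $t$ is insensitive to the $a_i$ and the rest involve only the $a_i$, this is a well-defined module structure, and being finite it is also a $\Gh$-module.

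As $p \nmid nm$, the actions of $\G$ and of $\Gh$ on $M$ restrict trivially to all edge and vertex groups of $\ggx$ and to all their closures $\Gsc(x)$, so each of the groups $H^1(\Gs(x),M)$ and $H^1(\Gsc(x),M)$ is canonically identified with $M = \Fp$ by evaluation at a generator, and each edge inclusion acts on $M$ by multiplication by a unit of $\Fp$. A direct computation of the connecting map $\hbar$ of Proposition~\ref{Prop::Mayer-Vietoris}, parallel to the one in the proof of Lemma~\ref{Lem::Vanishing} but with $t$ acting by $u$, then shows that $\hbar$ is represented over $\Fp$ by the same $s\times s$ matrix for $\G$ as for $\Gh$ --- the coincidence being exactly the commutativity of the diagram in Proposition~\ref{Prop::ComDiag}, whose horizontal maps are isomorphisms here --- and that the determinant of this matrix is $\pm\bigl(u\prod_i n_i - \prod_i m_i\bigr)$, which is $0$ in $\Fp$ by the choice of $u$. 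Hence $\hbar$ is not surjective in either case, and combined with the vanishing of $H^2$ of the fibres, the Mayer--Vietoris sequences give $H^2(\G,M) \cong \operatorname{coker}\hbar \ne 0$ and $H^2(\Gh,M) \cong \operatorname{coker}\hbar \ne 0$. Therefore $\cd(\G) \ge 2$ and $\cd(\Gh) \ge 2$, and with the upper bounds, $\cd(\G) = \cd(\Gh) = 2$. (When $n$ and $m$ are not coprime, one can instead obtain both inequalities directly from Lemma~\ref{Lem::FunnyModule} applied with $q = p$ a common prime factor of $n$ and $m$.)

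The step I expect to require the most care is the sign bookkeeping in the matrix representing $\hbar$: determining precisely the unit $u$ for which the determinant vanishes modulo $p$, and confirming that the same matrix governs $\hbar$ for $\G$ and for $\Gh$. Both are routine once the orientation conventions of Figure~\ref{Fig:HEXAGON}, the presentation~(\ref{Eq::CanonicalPresentation}), and the matrix $A_\hbar$ from the proof of Lemma~\ref{Lem::Vanishing} are unwound, but they are where the actual content sits.
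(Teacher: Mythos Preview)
Your upper bounds coincide with the paper's. For the lower bounds you diverge, and your route is correct.

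The paper establishes $\cd(\G)\ge 2$ with the one-line observation that $\G$ is not free, and then treats $\cd(\Gh)\ge 2$ by a case split: when $n,m$ are coprime it invokes the structural description $\Gh\cong\widehat{\operatorname{BS}(n,m)}\cong\prod_{p\in\Ic}\Z_p\rtimes\pZ$ (via a result of Dudkin and Example~\ref{Ex::coprimeBS}) and exhibits a non-projective pro-$p$ subgroup $\Z_p\rtimes\Z_p$; when $n,m$ share a prime $p$ it uses Lemma~\ref{Lem::FunnyModule} with $q=p$. By contrast you produce, uniformly in both cases, a single one-dimensional module $M=\Fp$ (for $p\nmid nm$) with a carefully chosen action of the stable letter, and then read off $H^2(\G,M)\cong H^2(\Gh,M)\cong\operatorname{coker}\hbar\ne 0$ from the matrix $A_\hbar$. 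Your determinant calculation is right: with trivial vertex actions and $t$ acting by $u$, one gets $\det A_\hbar=\pm\bigl(\prod_i m_i - u\prod_i n_i\bigr)$, the leading coefficient $\prod_i n_i$ is a unit mod $p$, and the resulting $u$ is itself a unit since $p\nmid\prod_i m_i$; the identification of the two $\hbar$'s is exactly the commuting square of Proposition~\ref{Prop::ComDiag} together with the fact that $\mu_V,\mu_E$ are isomorphisms for this $M$ because $p\in\Ic(n,m)$. What you gain is a self-contained argument that avoids the external structure result and handles both bounds and both groups at once; what the paper gains is brevity on the abstract side (``not free'') and a structural picture of $\Gh$ in the coprime case that is of independent interest. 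Your caveat about sign bookkeeping is well placed but, as above, does unwind cleanly.
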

	\begin{proof}
		Let $\G = \PA$ be a generalised Baumslag--Solitar group over an isocratic cycle. We already know that $\cd(\G) = 2$, as $\cd(\G)\leq 2$ by Corollary~\ref{Cor::CohomDim} and $\cd(\G)\geq 2$ as $\G$ is not free. Analogously, observe
		\[
		\cd(\Gh) \leq \max\{\cd(\Gsc(v) : v \in V(\Xi)\} + 1 \leq 2
		\]
		where the first inequality follows from Proposition~\ref{Prop::Mayer-Vietoris} and the second holds by Proposition~\ref{Prop::Closure}. Conversely, we shall demonstrate that $\cd(\Gh) \geq 2$. If the augmentation products $n = n(\Xi)$ and $m = m(\Xi)$ are coprime, then \[
        \Gh \cong \widehat{\operatorname{BS}(n,m)} \cong \prod_{p \notin \Ic(n,m)} \Z_p \rtimes \pZ
        \]
        where the first isomorphism holds by \cite[Lemma 3]{Dudkin}, and the second isomorphism holds by Example \ref{Ex::coprimeBS}. Thus $\Gh$ contains the pro-$p$ subgroup $\Z_p \rtimes \Z_p$ for any $p \in \Ic(n,m)$, which is not projective, and so $\cd(\Gh) \geq 2$ by \cite[Corollary 7.7.6]{Bible}. On the other hand, suppose that the augmentation products are isocratic but not coprime, so there exists a prime number $p$ which divides both $n$ and $m$ with equal, non-zero power. Then Lemma~\ref{Lem::FunnyModule} with $q = p$ yields the result.
	\end{proof}
\begin{corollary}
    \label{Cor::IsocraticNotCoprime}
    Let $\G = \PA$ be a generalised Baumslag--Solitar group over a cycle $\Xi$ and write $n = n(\Xi)$ and $m = m(\Xi)$ for the augmentation products. Assume that there are primes $p,q$ such that $p$ divides one and only one of $n$ and $m$, while $q$ divides both with equal non-zero power. Then $\G$ does not have separable cohomology.
\end{corollary}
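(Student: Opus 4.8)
The plan is to exhibit a single finite $\G$-module $M$ with $H^2(\G,M)\neq 0$ but $H^2(\Gh,M)=0$; the comparison map $\iota^*\colon H^2(\Gh,M)\to H^2(\G,M)$ is then not surjective, hence not an isomorphism, so $\G$ does not have separable cohomology by definition. First I would reduce to the case in which $n$ and $m$ are isocratic: if they are not, then $\Gh$ has torsion (this is the mechanism behind the non-isocratic case of Theorem~\ref{Thm::MTA}; one checks, much as in the proof of Proposition~\ref{Prop::Closure}, that at a prime $r$ with $\nu_r(n)\neq\nu_r(m)$ the profinite topology induced by $\G$ on vertex and edge groups cannot all agree, so some $\Gsc(v)$ has torsion by Lemma~\ref{Lem::DifferenceIsTorsion} and the classification of procyclic groups), whence $\cd(\Gh)=\infty$ while $\cd(\G)\leq 2$ by Corollary~\ref{Cor::CohomDim}, and $\iota^*$ already fails to be an isomorphism in every degree above $2$.

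So suppose $n,m$ are isocratic; since $q\mid\gcd(n,m)$ they are not coprime. The module is then the one built in Lemma~\ref{Lem::FunnyModule}: applied to the pair $p,q$, that lemma furnishes a $\G$-module structure on $M=\Fp^{q}$ with $H^2(\G,M)\neq 0$. Note that its optional conclusion $H^2(\Gh,M)\neq 0$ requires $p\mid\gcd(n,m)$ and is \emph{not} available here, since $p$ divides exactly one of $n$ and $m$ --- and indeed I will show the opposite. As $p$ divides exactly one of $n,m$, we have $\nu_p(n)\neq\nu_p(m)$, i.e.\ $p\notin\Ic(n,m)$, so by Proposition~\ref{Prop::Closure} each fibre closure $\Gsc(x)$, $x\in\Xi$, is isomorphic to $\Z_{\widehat{\Ic(n,m)}}$, a pro-$\Ic(n,m)$ group; since $|M|=p^{q}$ is a power of the prime $p\notin\Ic(n,m)$, Lemma~\ref{Lem::CoprimeCohomologyVanishes} then gives $H^\bullet(\Gsc(x),M)=0$ for every vertex and edge $x$. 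Feeding this into the profinite Mayer--Vietoris sequence of Proposition~\ref{Prop::Mayer-Vietoris} for $\Gh=\PP[\ggc]$ --- the identification of $\Gh$ with this profinite fundamental group being Proposition~\ref{Prop::ComDiag} --- every vertex and edge term vanishes in all degrees, so $H^{k}(\Gh,M)=0$ for all $k\geq 1$; in particular $H^2(\Gh,M)=0$, and we are done.

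The load-bearing step is the vanishing $H^2(\Gh,M)=0$, and everything in it turns on the observation $p\notin\Ic(n,m)$: this is what lets Proposition~\ref{Prop::Closure} render the fibres of $\Gh$ cohomologically invisible to the $p$-torsion module $M$. Satisfyingly, $p\notin\Ic(n,m)$ is precisely the hypothesis of Lemma~\ref{Lem::FunnyModule} that the present situation fails to meet, which is why that lemma pins $H^2(\G,M)$ above zero without claiming anything about $\Gh$. The remaining points --- that $|M|$ is a prime power coprime to the orders of the $\Gsc(x)$, and that the two long exact sequences of Proposition~\ref{Prop::ComDiag} run in parallel --- are routine.
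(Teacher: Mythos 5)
Your proposal is correct and follows essentially the same route as the paper: reduce to the isocratic case via the torsion/topology dichotomy (the paper cites Lemma~\ref{Lem::DifferenceIsTorsion} where you cite Proposition~\ref{prop::NonIsocraticCycle}, but these serve the same purpose), then take the module $M=\Fp^q$ from Lemma~\ref{Lem::FunnyModule} so that $H^2(\G,M)\neq 0$, and kill $H^2(\Gh,M)$ by combining Proposition~\ref{Prop::Closure} with Lemma~\ref{Lem::CoprimeCohomologyVanishes} and the profinite Mayer--Vietoris sequence. Your explicit handling of the non-isocratic subcase is, if anything, slightly more careful than the paper's one-line dismissal.
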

\begin{proof} 
    By Lemma~\ref{Lem::DifferenceIsTorsion}, we need only be concerned with the case where $\G$ induces the same profinite topology on all edge groups; by Proposition~\ref{Prop::Closure}, this is the full pro-$\Ic$ topology for $\Ic$ a set of primes which does not contain $p$. Let $M$ be the $\G$-module given in Lemma~\ref{Lem::FunnyModule} with associated primes $p$ and $q$, so that $H^2(\G,M) \neq 0$. For any edge $e \in E(\Xi)$, the profinite group $\Gsc(e)$ is a pro-$\Ic$ group, so in particular it cannot have any cohomology with coefficients in $M$, i.e.
	\[
	H^1(\Gsc(e), M) = 0
	\]
	as per Lemma~\ref{Lem::CoprimeCohomologyVanishes}. But then Proposition~\ref{Prop::Mayer-Vietoris} yields an exact sequence
	\[
	0 =  \bigoplus_{e \in E(\Xi)} H^1(\Gsc(e),M)  \to H^2(\Gh,M) \to 0
	\]
	whence $H^2(\Gh,M) = 0$. Hence there cannot exist an epimorphism $0 = H^2(\Gh,M) \to H^2(\G,M) \neq 0$ and $\G$ cannot have separable cohomology.
\end{proof}
Finally, we obtain the following result, which deals with the case where $n$ and $m$ are not isocratic and serves as a converse to Proposition~\ref{Prop::Closure}.
\begin{proposition} \label{prop::NonIsocraticCycle}
   Let $\G = \PA$ be a generalised Baumslag--Solitar group over a cycle $\Xi$, let $\iota \colon \G \to \Gh$ be its profinite completion. If $\Gh$ is torsion-free then the augmentation products $n = n(\Xi)$ and $m = m(\Xi)$ must be isocratic.
\end{proposition}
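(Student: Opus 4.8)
The plan is to prove the contrapositive: if $n = n(\Xi)$ and $m = m(\Xi)$ are not isocratic, then $\Gh$ contains torsion. Fix a prime $p$ with $\nu_p(n) \geq 1$, $\nu_p(m) \geq 1$ and $\nu_p(n) \neq \nu_p(m)$, and --- after possibly reversing the orientation of the cycle, which interchanges $n$ and $m$ without changing the isomorphism type of $\G$ --- assume $\nu_p(m) > \nu_p(n)$. The argument rests on two facts: (i) a uniform bound on the $p$-adic valuation of the order of the image of any vertex generator in any finite quotient of $\G$, and (ii) the existence of one finite quotient in which some vertex generator has image of order divisible by $p$. Given Lemma~\ref{Lem::DifferenceIsTorsion}, these together force $\Gh$ to have torsion.

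For (i), label $V(\Xi) = \{v_1,\dots,v_s\}$ cyclically so that edge $e_i$ joins $v_i$ and $v_{i+1}$ (indices modulo $s$); in the presentation~(\ref{Eq::CanonicalPresentation}) this edge contributes a relation under which $a_{v_i}^{\yi_0(e_i)}$ is conjugate to $a_{v_{i+1}}^{\pm\yi_1(e_i)}$ --- it is actually equal for a spanning-tree edge, and conjugate by the stable letter for the remaining edge. Given a finite quotient $\phi\colon\G\to Q$, write $x_i = \nu_p(|\phi(a_{v_i})|)$, $c_i = \nu_p(\yi_0(e_i))$, $d_i = \nu_p(\yi_1(e_i))$; since conjugate elements of a finite group have equal order and $\nu_p(|g^k|) = \max(\nu_p(|g|)-\nu_p(k),\,0)$, each edge gives $\max(x_i - c_i,\,0) = \max(x_{i+1} - d_i,\,0)$, while $\sum_i c_i = \nu_p(n)$ and $\sum_i d_i = \nu_p(m)$. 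If some $x_{i_0}$ exceeded $K := \nu_p(n) + \max_j c_j$, one checks inductively that the values obtained by following the cycle from $v_{i_0}$ all remain larger than every $c_j$, so each maximum is attained by its linear argument, whence $x_{i+1} = x_i - c_i + d_i$ at each step; going once around the loop then yields $x_{i_0} = x_{i_0} + \sum_j(d_j - c_j) = x_{i_0} + \nu_p(m) - \nu_p(n) > x_{i_0}$, a contradiction. Hence $\nu_p(|\phi(a_v)|) \leq K$ for every $v$ and every finite quotient $\phi$.

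For (ii), abelianise the presentation~(\ref{Eq::CanonicalPresentation}): the stable letter survives as a free $\Z$-summand, so $\G^{\mathrm{ab}} \cong \Z \oplus (\Z^s/R)$ with $R \leq \Z^s$ spanned by the abelianised edge relations. The corresponding $s \times s$ integer matrix is cyclic-bidiagonal with non-zero entries $\pm\yi_0(e_i)$ and $\pm\yi_1(e_i)$, so its determinant has absolute value $n + m$ or $|n - m|$ --- non-zero because $n \neq m$, and divisible by $p$ because $p \mid \gcd(n,m)$. Thus $\Z^s/R$ is a non-trivial finite abelian group of order divisible by $p$, hence admits a surjection onto $\Z/p$; precomposing with the abelianisation and the projection killing the free summand gives a homomorphism $\phi\colon\G\to\Z/p$ that is non-trivial on $\langle a_{v_1},\dots,a_{v_s}\rangle$, so $\nu_p(|\phi(a_{v_0})|) = 1$ for some $v_0$.

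To conclude, suppose $\Gh$ were torsion-free. By Lemma~\ref{Lem::DifferenceIsTorsion} and connectivity of $\Xi$, $\G$ induces the same profinite topology on every vertex group, so by the classification of procyclic groups each $\overline{\iota(\Gs(v))}$ is isomorphic to a single $\Z_{\widehat{\Sc}}$. By (i), $\Z_p$ does not embed in $\overline{\iota(\Gs(v))}$, so $p \notin \Sc$; but then for every finite quotient $\phi$ of $\G$ the restriction $\phi|_{\Gs(v)}$ factors through a quotient of $\Z_{\widehat{\Sc}}$, which has no non-trivial $p$-quotient --- contradicting (ii). Therefore $\Gh$ has torsion. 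I expect (i) to be the main obstacle: one must arrange the cyclic bookkeeping so that a sufficiently large $x_{i_0}$ provably propagates, with every maximum unfrozen, once around the cycle and back to itself.
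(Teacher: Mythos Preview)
Your proof is correct and follows the same two-pronged strategy as the paper: establish (i) a uniform upper bound on the $p$-part of the order of any vertex generator in any finite quotient, and (ii) the existence of at least one finite quotient in which some vertex generator has image of order divisible by $p$; together these force the $p$-component of some $\overline{\iota(\Gs(v))}$ to be finite and non-trivial, hence torsion.

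The execution differs in both parts. For the bound (i), the paper exploits the global relation $a_j^{n} \sim a_j^{\pm m}$ obtained by composing all the edge relations around the cycle, which immediately gives $\max(\nu_p(|\phi(a_j)|)-\nu_p(n),0)=\max(\nu_p(|\phi(a_j)|)-\nu_p(m),0)$ and hence $\nu_p(|\phi(a_j)|)<\nu_p(n)$ in one line; your edge-by-edge bookkeeping reaches the same conclusion but with more work. For the existence (ii), your abelianisation argument is genuinely different from, and cleaner than, the paper's: the paper partitions the cycle into maximal arcs along which the inclusion indices are coprime to $p$, locates an arc with the right boundary behaviour, and builds an explicit surjection onto $C_p$ by hand, whereas you read the result off the determinant $\pm n \pm m$ of the relation matrix. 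Finally, your concluding paragraph routes through Lemma~\ref{Lem::DifferenceIsTorsion}, but this is unnecessary: once you know that for the vertex $v_0$ produced in (ii) the $p$-component of $\overline{\iota(\Gs(v_0))}$ is both bounded (by (i)) and non-trivial (by (ii)), it is a non-trivial finite cyclic $p$-group and you are done directly.
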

\begin{proof}
    Assume for a contradiction that $m$ and $n$ are not isocratic, so that there exists some prime $p$ satisfying $\nu_p(n), \nu_p(m)\neq 0$ and $\nu_p(n)\neq \nu_p(m)$. We may assume without loss of generality that $\nu_p(n)> \nu_p(m)$. We shall demonstrate that there exists a vertex $v \in V(\Xi)$ such that $H(v) = \overline{\iota(\Gs(v))}$ contains torsion. As all vertex groups are cyclic, the profinite group $H(v)$ is procyclic, so \cite[Proposition 2.3.8]{Bible} gives $H = \prod_{q \text{ prime}} H_q(v)$ for some collection of procyclic pro-$q$ groups $H_q(v)$. Hence, it will suffice to show that there is a vertex $v \in V(\Xi)$ with $H_p(v)$ finite cyclic, or equivalently, that the following properties holds:
    \begin{enumerate}[(a)]
        \item there is a finite quotient $\phi \colon \G \to Q$ such that $p$ divides $|\phi(\Gs(v))|$; and
        \item there is an integer $k >1$ such that $p^k$ does not divide the order $|\phi(\Gs(v))|$ in any finite quotient $\phi \colon \G \to Q$.
    \end{enumerate}
    Label and orient $\Xi$ as in Figure~\ref{Fig:HEXAGON} and consider the presentation given in (\ref{Eq::CanonicalPresentation}). We commence with (b), showing that $k = \nu_p(n)$ has this property. Let $v_j$ be any vertex of $\Xi$ and assume for contradiction that there exists a finite quotient $\phi\colon\G\rightarrow Q$ of $\G$ in which $\nu_p(|\phi(a_j)|) \geq k$. By (\ref{Eq::CanonicalPresentation}), the generator $a_j$ corresponding to $v_j$ has conjugate powers $a_j^n \sim a_j^{\pm m}$, where the sign depends on the signs of the various edge inclusions, so the images of these powers in $Q$ must have the same order. But $\nu_p(|\phi(a_j)^n|)= 0 < k - \nu_p(m) = \nu_p(|\phi(a_j)^m|)$, a contradiction. This shows (b).

    For (a), begin by partitioning the vertex set $V(\Xi)$ into maximal (possibly length zero) paths $\{P_1,...,P_t\}$ such that each $P_i$ has the following property: 
    \begin{itemize}
    \item[(i)] For all edges $e$ in $P_i$ the numbers $\yi_1(e)$ and $\yi_0(e)$ are coprime to $p$.
    \end{itemize}
    We may assume that the paths $\{P_1, ..., P_t\}$ are labelled cyclically and clockwise such that $v_1\in P_1$, and we may assume up to relabeling that the edge $e_s$ does not lie in any of our $P_i$'s. We claim that at least one of our paths $P_i$ must also have the following property:
    \begin{itemize}
    \item[(ii)] For the unique edge $e(P_i)$ leaving $P_i$, the inclusion index $\yi_0(e(P_i))$ is divisible by $p$, and for the unique edge $f(P_i)$ incoming to $P_i$, the inclusion index $\yi_1(f(P_i))$ is divisible by $p$.
    \end{itemize}

    Indeed, assume for contradiction that no path $P_i$ has property (ii). Since $p$ divides both $m$ and $n$ there exists some path $P_i$ such that $\yi_0(e(P_i))$ is divisible by $p$, so by assumption $\yi_1(f(P_i))$ is not divisible by $p$ else $P_i$ would have property (ii). Then $\yi_0(e(P_{i-1}))=\yi_0(f(P_i))$ is divisible $p$ by maximality of the $P_j$'s, and similarly $\yi_1(f(P_{i-1}))$ is not divisible by $p$. It follows by induction that for all $j$ the integer $\yi_0(e(P_j))$ is divisible by $p$ but $\yi_1(f(P_j))$ is not, but then $n$ is not divisible by $p$, a contradiction. Thus at least one of our $P_j$'s has property (ii), so pick one such $P_j$ and denote it by $P$. Define $e=e(P)$, $f=f(P)$.

    By property (i), we may proceed as in the proof of Lemma~\ref{Lem::Mini_Closure} to construct a homomorphism $\phi_P\colon\pi_1(\Gs|_P, P)\rightarrow C_p$ satisfying the property that $\phi_P|_{\Gs(v)}$ is a surjection for all $v\in V(P)$. Consider the map $\phi\colon\G\rightarrow C_p$ generated by
    \[\phi(a_i) =\begin{cases}
        \phi_P(a_i), & v_i\in P \\
        0, & v_i \notin P
    \end{cases}
    \]
    and $\phi(t) = 0$, where $t$ is the unique non-trivial stable letter in $\G$. One verifies that $\phi$ extends to a well-defined homomorphism using properties (i) and (ii). We conclude that (a) holds, yielding the required contradiction and hence the result.
\end{proof}
\begin{theorem}\label{Thm::MTALoopCase}
    Let $\G = \PA$ be a generalised Baumslag--Solitar group over a cycle $\Xi$ with augmentation products $n = n_0(\Xi)$ and $m = n_1(\Xi)$. Exactly one of the following cases holds:
    \begin{enumerate}
        \item the numbers $n,m$ are either coprime or satisfy $|m|=|n|$, in which case $\cd(\G) = \cd(\Gh) = 2$ and $\G$ has separable cohomology;
        \item the numbers $n,m$ are isocratic but not coprime and $|n| \neq |m|$, in which case $\cd(\G) = \cd(\Gh) = 2$ but $\G$ does not have separable cohomology; and
        \item the numbers $n,m$ are not isocratic, in which case $\cd(\G) = 2$ but $\cd(\Gh) = \infty$, and $\G$ does not have separable cohomology.
    \end{enumerate}
\end{theorem}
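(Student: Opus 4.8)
The plan is to verify that the three cases form a genuine partition and then to derive each from the results already assembled in this section; the theorem is a synthesis, so almost no new argument is needed.

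I would first dispose of the statements common to all three cases. By Corollary~\ref{Cor::CohomDim} the graph of $\Z$'s decomposition gives $\cd(\G)\leq 2$, and since $\G$ is not free (exactly as noted in the proof of Corollary~\ref{Cor::ProfCohomDim}) one has $\cd(\G)=2$ throughout. I would also check that the cases are mutually exclusive and exhaustive: case~(3) is the negation of isocracy, and for an isocratic pair the condition ``$n,m$ coprime or $|n|=|m|$'' is exactly the negation of ``$n,m$ not coprime and $|n|\neq|m|$''. The only point requiring a line of elementary number theory is that an isocratic pair lying in case~(2) really does admit a prime $q$ with $\nu_q(n)=\nu_q(m)\neq 0$ (because $\gcd(n,m)>1$) and a prime $p$ dividing exactly one of $n,m$ (for otherwise isocracy would force $\nu_p(n)=\nu_p(m)$ for \emph{every} prime $p$, hence $|n|=|m|$).

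For case~(1) I would invoke Proposition~\ref{Prop::PositiveCase} with $\Ic=\Ic(n,m)$. Hypothesis~(a) there is precisely Proposition~\ref{Prop::Closure}. For hypothesis~(b): if $|n|=|m|$ then $\Ic(n,m)$ is the set of all primes, so the only finite $\G$-module of order coprime to $\Ic(n,m)$ is trivial and $H^2(\G,M)=0$ holds vacuously; if instead $n,m$ are coprime, then a finite $\G$-module $M$ of order coprime to $\Ic(n,m)$ has order divisible only by primes dividing $nm$, and splitting $M$ into its Sylow submodules --- each a $\G$-module of prime-power order whose prime divides exactly one of $n,m$ by coprimality --- reduces the vanishing of $H^2(\G,M)$ to Lemma~\ref{Lem::Vanishing}. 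Thus $\G$ has separable cohomology, and $\cd(\Gh)=2$ by Corollary~\ref{Cor::ProfCohomDim} since the pair is isocratic.

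For case~(2), the equalities $\cd(\G)=\cd(\Gh)=2$ are again Corollary~\ref{Cor::ProfCohomDim}, and feeding the primes $p,q$ exhibited above into Corollary~\ref{Cor::IsocraticNotCoprime} shows $\G$ does not have separable cohomology. For case~(3), Proposition~\ref{prop::NonIsocraticCycle} (contrapositive) gives that $\Gh$ has torsion, so it contains a nontrivial finite subgroup and hence $\cd(\Gh)=\infty$; and if $\G$ had separable cohomology then $H^n(\Gh,M)\cong H^n(\G,M)=0$ for every finite $\G$-module $M$ and every $n\geq 3$, which, as every discrete torsion $\Gh$-module is a filtered colimit of finite ones and cohomology commutes with such colimits, would force $\cd(\Gh)\leq 2$, a contradiction --- so $\G$ does not have separable cohomology. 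The proof is thus essentially organisational; the one place where a little care is needed is matching, in case~(1), the class of modules for which $H^2(\G,M)$ must vanish (those of order coprime to $\Ic(n,m)$) against the precise hypothesis of Lemma~\ref{Lem::Vanishing}.
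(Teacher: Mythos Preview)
Your proof is correct and, in outline, matches the paper's: cases~(2) and~(3) are handled exactly as the paper does (Corollary~\ref{Cor::ProfCohomDim} with Corollary~\ref{Cor::IsocraticNotCoprime}, and Proposition~\ref{prop::NonIsocraticCycle} respectively), and the coprime subcase of~(1) is the paper's argument verbatim via Proposition~\ref{Prop::PositiveCase}, Proposition~\ref{Prop::Closure} and Lemma~\ref{Lem::Vanishing}.

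The one genuine divergence is in the $|n|=|m|$ subcase of~(1). The paper does \emph{not} use Proposition~\ref{Prop::PositiveCase} here; instead it invokes an external structural fact (Levitt: such a group is virtually $F_n\times\Z$), deduces LERF, hence efficiency, and then quotes \cite{GJZ}. Your route --- observing that $|n|=|m|$ forces $\Ic(n,m)$ to be the set of all primes, so Proposition~\ref{Prop::Closure} gives the full profinite topology on edge groups and hypothesis~(b) of Proposition~\ref{Prop::PositiveCase} is vacuous --- is more uniform and entirely internal to the paper's machinery. This is arguably an improvement: it avoids three external citations and treats both halves of case~(1) with the same lemma. The paper's route, on the other hand, yields the stronger intermediate conclusion that the graph of groups is efficient (not merely that $\G$ has separable cohomology), which your argument does not directly give.
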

\begin{proof}
    For (1), assume first $|n| = |m|$, in which case $\G$ is virtually $F_n \times \Z$ by \cite[Proposition 2.6]{Levitt07}, so it is LERF (cf. \cite[Theorem 4]{Allenby73}) and in particular it is an efficient graph of groups whose vertex groups have separable cohomology. Thus $\G$ has separable cohomology by \cite[Proposition 3.1]{GJZ}. On the other hand, suppose that $|n| \neq |m|$ but that $n,m$ are coprime. Then $\cd(\G) = \cd(\Gh) = 2$ by Corollary~\ref{Cor::ProfCohomDim}. Moreover, the two conditions of Proposition~\ref{Prop::PositiveCase} are satisfied by Proposition~\ref{Prop::Closure} and Lemma~\ref{Lem::Vanishing}, respectively. We infer that $\G$ must also have separable cohomology in this case.

    For (2), suppose that the numbers $n,m$ are isocratic but not coprime, and that $|n| \neq |m|$. Then there exists a prime $p$ which divides one and only one of $n$ or $m$ and a prime $q$ with $\nu_q(n) = \nu_q(m) > 0$. The postulated result now follows from Corollary~\ref{Cor::ProfCohomDim} and Corollary~\ref{Cor::IsocraticNotCoprime}.

    Finally, (3) follows from the conjunction of Corollary~\ref{Cor::CohomDim} and Proposition~\ref{prop::NonIsocraticCycle}.
\end{proof}

Theorem~\ref{Thm::MTA} follows as an immediate corollary of the above theorem.

\section{The General Case}\label{Sec::GBS}
In this section, we consider the general case where $\Xi$ is any graph. It turns out that, surprisingly, generalised Baumslag--Solitar groups over graphs which are more complex than cycles almost never have separable cohomology, for reasons simpler than in the cycle case. The situation is illustrated by the following three lemmata, the first of which provides a restrictive sufficient condition under which cohomological separability will always hold.
\begin{lemma}\label{Lem::GoodIfFullTop}
    Let $\G = \PA$ be a generalised Baumslag--Solitar group over a graph $\Xi$ and let $\iota \colon \G \to \Gh$ be its profinite completion. If $\G$ induces the full profinite topology on all of its vertex groups, then $\G$ has separable cohomology.
\end{lemma}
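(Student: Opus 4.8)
The plan is to deduce the lemma from Proposition~\ref{Prop::PositiveCase}, applied with the collection of primes $\Ic$ taken to be the set of \emph{all} primes; for that choice the full pro-$\Ic$ topology is simply the full profinite topology. With $\Ic$ the set of all primes, hypothesis~(b) of that proposition holds vacuously: the only finite $\G$-module whose order is coprime to every prime is the zero module, and $H^2(\G,0)=0$. So the entire argument reduces to verifying hypothesis~(a): that $\G$ induces its full profinite topology on every edge group $\Gs(e)$.

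The key step is to propagate the hypothesis from vertex groups to edge groups. Fix $e \in E(\Xi)$ and set $v = d_0(e)$; then $\partial_{e,0}$ identifies $\Gs(e)$ with a subgroup of finite index $d := \yi_0(e)$ in $\Gs(v) \cong \Z$. Since $\Gs(e)$ has finite index in $\Gs(v)$, every finite-index subgroup of $\Gs(e)$ is already finite-index in $\Gs(v)$; as $\G$ induces the full profinite topology on $\Gs(v)$ by assumption, each such subgroup contains $\Gs(v) \cap N$, hence $\Gs(e) \cap N$, for some finite-index normal $N \trianglelefteq \G$, and is therefore open in the topology induced on $\Gs(e)$. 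Since the induced topology is in any case coarser than the full profinite topology, the two coincide. Equivalently, passing to closures in $\Gh$ one has $\Gsc(v) \cong \widehat{\Gs(v)} \cong \pZ$, and $\Gsc(e)$ is the closure in $\Gsc(v)\cong\pZ$ of the image of $\Gs(e)$, which corresponds to the index-$d$ subgroup $d\Z$, hence equals $d\pZ \cong \pZ$ compatibly with the canonical map from $\Gs(e) \cong \Z$. This is exactly hypothesis~(a), and Proposition~\ref{Prop::PositiveCase} then delivers the conclusion.

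I do not expect a genuine obstacle here: the content of the lemma is that the vertex-group condition is already strong enough to activate Proposition~\ref{Prop::PositiveCase}. The one point that needs a little care is the short topological argument above — that the full profinite topology on $\Gs(v)$ restricts to the full profinite topology on the finite-index subgroup $\Gs(e)$ — together with checking that the identification $\Gsc(e) \cong \pZ$ is compatible with the canonical maps, so that Lemma~\ref{Lem::Surj} is applicable inside the proof of Proposition~\ref{Prop::PositiveCase}. If one wishes to avoid the citation, the same reasoning runs directly on the diagram of Proposition~\ref{Prop::ComDiag}: each $\Gsc(v) \cong \Gsc(e) \cong \pZ$ has cohomological dimension $1$, so in degree $2$ the boundary maps $\delta$ onto $H^2(\Gh,M)$ and onto $H^2(\G,M)$ are both surjective; $\mu_E$ is surjective by Lemma~\ref{Lem::Surj}; commutativity of the lower square forces $\iota^*$ to be surjective in degree $2$; degrees $0,1$ are covered by Lemma~\ref{Lem::DimOne} and degrees $\geq 3$ by Corollary~\ref{Cor::CohomDim}; and Lemma~\ref{Lem::Surjections} upgrades surjectivity in every degree to an isomorphism, giving cohomological separability.
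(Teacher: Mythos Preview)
Your proposal is correct and follows essentially the same route as the paper: propagate the full-topology hypothesis from vertex groups to edge groups, then use Lemma~\ref{Lem::Surj} to make $\mu_E$ surjective in the diagram of Proposition~\ref{Prop::ComDiag}, whence $\iota^*$ is surjective in degree~$2$ and Lemmata~\ref{Lem::DimOne}, \ref{Lem::Surjections} and Corollary~\ref{Cor::CohomDim} finish. The only differences are packaging: the paper does the diagram chase directly rather than citing Proposition~\ref{Prop::PositiveCase}, and it obtains the edge-group topology by appealing to the proof of Lemma~\ref{Lem::DifferenceIsTorsion}, whereas your finite-index argument is a cleaner and more self-contained way to get the same conclusion.
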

\begin{proof}
    As before, we may combine Corollary \ref{Cor::CohomDim} with Lemmata \ref{Lem::DimOne} and \ref{Lem::Surjections}, to reduce to showing that the profinite completion map $\iota \colon \G \to \Gh$ induces a surjection
    \[
        \iota^* \colon H^2(\Gh,M) \twoheadrightarrow H^2(\G, M)
    \]
    on cohomology in dimension 2. Assume that $\G$ induces the full profinite topology on all of its vertex groups, and let $M$ be a $\G$-module. Then $\G$ also induces the full topology on each of its edge groups by the proof of Lemma~\ref{Lem::DifferenceIsTorsion}. Now Proposition~\ref{Prop::ComDiag} yields a commutative diagram, whereof a portion reads
    \[\begin{tikzcd}
        {\bigoplus_{e \in E(\Xi)} H^1(\Gsc(e),M)} \arrow[d, "\delta"] \arrow[rr, "\mu_E"] && {\bigoplus_{e \in E(\Xi)} H^1(\Gs(e),M)} \arrow[d, "\delta"] \\
        {H^{2}(\Gh,M)} \arrow[d]\arrow[rr, "\iota^*"]                                 && {H^{2}(\G,M)} \arrow[d]                    \\
        0 && 0
        \end{tikzcd}
        \]
    whose columns are exact. By assumption, the map $\mu_E$ agrees with the product of maps induced by the profinite completion $\Gs(e) \to \Gsc(e) = \widehat{\Gs(e)}$, so it is an epimorphism by Lemma~\ref{Lem::Surj}. Thus $\iota^*$ must also be an epimorphism, and $\G$ has separable cohomology.
\end{proof}
We are able to characterise when a GBS group induces the full profinite topology on its vertex groups in terms of its augmentation products.
\begin{lemma} \label{lem:aIffb}
    Let $\G = \PA$ be a generalised Baumslag--Solitar group over a graph $\Xi$. Then the following are equivalent:
    \begin{enumerate}[(a)]
        \item For every cycle $\Upsilon \subseteq \Xi$, the equation $|m(\Upsilon)| = |n(\Upsilon)|$ holds,
        \item The group $\G$ induces the full profinite topology on each of its vertex groups.
    \end{enumerate}
\end{lemma}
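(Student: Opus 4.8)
The plan is to prove \textbf{(b) $\Rightarrow$ (a)} by contraposition, reusing the obstruction argument from the proof of Proposition~\ref{prop::NonIsocraticCycle}, and \textbf{(a) $\Rightarrow$ (b)} by constructing, for each vertex, finite quotients of $\G$ in which that vertex generator has arbitrarily large order, along the lines of Lemma~\ref{Lem::Mini_Closure}.

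For \textbf{(b) $\Rightarrow$ (a)}, I would suppose that (a) fails, so that some cycle $\Upsilon \subseteq \Xi$ and some prime $p$ satisfy, after possibly interchanging the two orientations of $\Upsilon$, $\nu_p(n(\Upsilon)) > \nu_p(m(\Upsilon)) \geq 0$. Fixing a vertex $w$ of $\Upsilon$ with generator $a$, a direct computation with the defining relations of $\G$ associated to the edges of $\Upsilon$ produces an element $\gamma \in \G$ (built from the stable letters of the non-tree edges met along $\Upsilon$) with $\gamma\, a^{n(\Upsilon)}\, \gamma^{-1} = a^{\pm m(\Upsilon)}$. Hence in every finite quotient $\phi \colon \G \to Q$ the elements $\phi(a)^{n(\Upsilon)}$ and $\phi(a)^{m(\Upsilon)}$ are conjugate, so of equal order, and comparing the $p$-parts of these orders forces $\nu_p(|\phi(a)|) \leq \nu_p(n(\Upsilon))$. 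Therefore the pro-$p$ component of the procyclic group $\overline{\iota(\Gs(w))}$ is finite, so $\overline{\iota(\Gs(w))} \not\cong \pZ \cong \widehat{\Gs(w)}$ and $\G$ does not induce the full profinite topology on $\Gs(w)$, contradicting (b).

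For \textbf{(a) $\Rightarrow$ (b)}, I would first reformulate (a): since $\nu_p(n(\Upsilon)) - \nu_p(m(\Upsilon)) = \sum_{e}\bigl(\nu_p(\yi_0(e)) - \nu_p(\yi_1(e))\bigr)$ for every cycle $\Upsilon$ (summing over its edges oriented consistently around it), condition (a) is equivalent to the statement that, for each prime $p$, the $\Z$-valued edge cochain $e \mapsto \nu_p(\yi_0(e)) - \nu_p(\yi_1(e))$ on $\Xi$ is a coboundary --- equivalently, the power-counting function of~(\ref{Eq::Epsilon}) extends to a function $\varepsilon_p \colon V(\Xi) \to \Z$ that is well defined up to an additive constant \emph{independently of any spanning tree} and has the prescribed increment across \emph{every} edge of $\Xi$, not just the tree edges. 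Then, fixing $v_0 \in V(\Xi)$ and a prime $p$, I would normalise $\varepsilon_p$ to be non-negative with minimum $0$, choose a spanning tree $\Theta$ and an integer $l$ large compared with all valuations $\nu_p(\yi_j(e))$, set $C = \Z/p^l$ and $Q = C \rtimes \Aut(C)$, and define $\phi \colon \G \to Q$ by sending each $a_v$ to an element of $C$ of order $p^{\,l-\varepsilon_p(v)}$, with the prime-to-$p$ unit parts propagated along $\Theta$ exactly via the recursion in the proof of Lemma~\ref{Lem::Mini_Closure} so that every tree-edge relation holds, and sending each stable letter $t_e$ of a non-tree edge to an automorphism of $C$ carrying $\phi(a_{d_0(e)})^{\yi_0(e)}$ to $\phi(a_{d_1(e)})^{\pm\yi_1(e)}$; such an automorphism exists precisely because the coboundary identity makes these two elements of $C$ have equal order, and distinct stable letters occur in distinct relations. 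After checking (as in Lemma~\ref{Lem::Mini_Closure}) that all relations of $\G$ are respected, $\phi$ is a homomorphism with $|\phi(a_{v_0})| = p^{\,l-\varepsilon_p(v_0)}$; letting $l \to \infty$ shows $\overline{\iota(\Gs(v_0))}$ surjects onto $\Z/p^k$ for every $k$, and ranging over all primes $p$ together with the classification of procyclic groups gives $\overline{\iota(\Gs(v_0))} \cong \pZ \cong \widehat{\Gs(v_0)}$. As $v_0$ was arbitrary, (b) follows.

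The hard part is the (a) $\Rightarrow$ (b) direction, specifically the verification that the prescribed map on generators extends to a homomorphism $\G \to Q$, together with the sign bookkeeping for the edge inclusions; this is formally the same computation already done in Lemma~\ref{Lem::Mini_Closure}. The one genuinely new feature is that $\Xi$ may carry several independent cycles, and the coboundary reformulation of (a) is exactly what supplies a single globally coherent $\varepsilon_p$ --- rather than one depending on the chosen spanning tree --- which in turn is what lets the non-tree edge relations be satisfied and makes the construction go through uniformly.
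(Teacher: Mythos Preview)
Your proposal is correct and, for (b) $\Rightarrow$ (a), matches the paper's argument: the paper passes to the subgroup $\pi_1(\Gs|_\Upsilon,\Upsilon)\leq\G$ and invokes the obstruction already proved for cycles, while you run the same conjugacy computation $\gamma a^{n(\Upsilon)}\gamma^{-1}=a^{\pm m(\Upsilon)}$ directly in $\G$; these are equivalent.

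For (a) $\Rightarrow$ (b) there is a genuine organisational difference worth noting. The paper argues by induction on $\operatorname{rk}_\Z H_1(\Xi)$: the base case is a tree (handled as in Lemma~\ref{Lem::Mini_Closure}), and the inductive step removes one non-tree edge $e$, applies the hypothesis to $\Xi-e$, and then extends the resulting map across $t_e$ by choosing an automorphism of $C_{p^l}$ matching the two images of $\Gs(e)$. Your approach instead reformulates (a) cohomologically --- the edge cochain $e\mapsto\nu_p(\yi_0(e))-\nu_p(\yi_1(e))$ is a coboundary --- which yields a single globally defined $\varepsilon_p$ and lets you write down the homomorphism $\G\to C_{p^l}\rtimes\Aut(C_{p^l})$ in one stroke, assigning all stable letters simultaneously. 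The resulting map is the same; what your packaging buys is a clearer explanation of \emph{why} condition (a) is precisely the compatibility needed at the non-tree edges, whereas the paper's induction verifies this one edge at a time without naming the underlying cocycle condition.
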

\begin{proof}
    First assume that their exists some cycle $\Upsilon \subseteq \Xi$ such that equation $m(\Upsilon) = n(\Upsilon)$ does not hold. Then there exists some prime $p$ with $\nu_p(m(\Upsilon))\neq \nu_p(n(\Upsilon))$. The profinite topology induced by $\G$ on $\Gs(v)$ for any $v\in V(\Upsilon)$ must be at least as coarse as that induced by $\PA[(\Gs|_{\Upsilon}, \Upsilon)]$, and by Proposition~\ref{Prop::Closure}  there exists at least one vertex $v\in V(\Upsilon)$ such that $\PA[(\Gs|_{\Upsilon}, \Upsilon)]$ does not induce on $\Gs(v)$ its full profinite topology. It follows that (b) implies (a).

    For the other direction, assume that for every cycle $\Upsilon \subseteq \Xi$, the equation $|m(\Upsilon)| = |n(\Upsilon)|$ holds. We proceed by induction on $\operatorname{rk}_\Z H_1(\Xi)$ to prove a slightly stronger result: for any generalised Baumslag--Solitar group $\G = \PA$ over a graph of groups $\ggx$ with property (a), for all vertices $v\in V(\Xi)$, for any prime $p$ and any $k\in\Z_{\geq 1}$, there exists some $l\in\Z_{\geq 1}$ and a map $\phi\colon \G\rightarrow C_{p^l}\rtimes \operatorname{Aut}(C_{p^l})$ such that:
    \begin{enumerate}[(i)]
        \item The image of $\Gs(v)$ under $\phi$ is a copy of $C_{p^k}$, i.e. $\phi(\Gs(v))\cong C_{p^k}$, and;
        \item For all $w\in V(\Xi)$ we have that $\phi(\Gs(w))\leq C_{p^l}$, the normal factor in the semidirect product $C_{p^l}\rtimes \operatorname{Aut}(C_{p^l})$.
    \end{enumerate}
    For the base case, assume that $H_1(\Xi) = 0$, so $\Xi$ is a tree and property (a) holds vacuously. In this case, let $v\in V(\Xi)$ be a vertex, let $p$ be a prime and $k\in\Z_{\geq 1}$. Since $\Xi$ is a tree we may define the power counting function $\varepsilon_{p, w}$ as in (\ref{Eq::Epsilon}) with respect to $\Xi$. We may assume that $w$ minimises $\varepsilon_{p, w}$. Choose $l = k+\varepsilon_{p, w}(v)$. Then, as in the proof of Lemma~\ref{Lem::Mini_Closure}, we may construct a homomorphism $\phi\colon\G\rightarrow C_{p^l}\rtimes \operatorname{Aut}(C_{p^l})$ with the desired properties.
    
    For the inductive step, let $\G$ be a generalised Baumslag--Solitar group over a graph $\Xi$ with $\operatorname{rk}_\Z(H_1(\Xi))=j>0$. Fix some spanning tree $\Theta$ of $\Xi$, let $e \in E(\Xi)-E(\Theta)$ be an edge and let $\Upsilon\subseteq \Xi$ be some cycle subgraph that contains $e$, which we may assume is labeled and oriented as in Figure~\ref{Fig:HEXAGON}. Then, consider the subgraph of groups $(\Gs|_{\Xi-e}, \Xi-e)$. The fundamental group $\Delta=\PA[(\Gs|_{\Xi-e}, \Xi-e)]\leq \G$ is a generalised Baumslag--Solitar group over $\Xi-e$, which contains every vertex of $\Xi$ and has $\operatorname{rk}_\Z(\Xi-e)<\operatorname{rk}_\Z(\Xi)$. Let $v\in V(\Xi)$, let $p$ be a prime and $k\in\Z_{\geq 1}$. Then by induction there exists some natural number $l$ and a map $\phi_\Delta\colon \Delta\rightarrow C_{p^l}\rtimes \operatorname{Aut}(C_{p^l})$ that satisfies properties (i) and (ii). 
    
    We will show that $\phi_\Delta$ extends to a map $\phi\colon \G\rightarrow C_{p^l}\rtimes \operatorname{Aut}(C_{p^l})$ which retains properties (i) and (ii). Indeed, consider $\G$ via the presentation defined with respect to the spanning tree $\Theta$, so that $\G=\PA[(\Gs, \Xi, \Theta)]$, and let $t_e\in \G$ be the (non-trivial) stable letter in $\G$ associated to the edge $e$. Let $a_0$ and $a_1$ be the generators of $\Gs(d_0(e))$ and $\Gs(d_1(e))$, respectively, and define the function $\varepsilon_{p, d_1(e)}$ on the tree $\Upsilon-e$. By property (a), the graph of groups $(\Gs\at{\Upsilon},\Upsilon)$ is isocratic and every prime $p$ lies in the isocracy locus, so (\ref{eq:epsilonFormula}) yields $\varepsilon_{p, d_1(e)}(d_0(e))= \nu_p(\yi_1(e))-\nu_p(\yi_0(e))$ and
    \[|\phi_\Delta(a_0)|=|\phi_\Delta(a_1)|-\varepsilon_{p, d_1(e)}(d_1(e))=|\phi_\Delta(a_1)| + \nu_p(\yi_0(e)) -\nu_p(\yi_1(e))\]
    must hold in $C_p{^l}$, where the first equality derives from (\ref{Eq::Order}). Thus
    \[
    |\phi_\Delta(a_0)^{\yi_0(e)}|=|\phi_\Delta(a_0)|-\nu_p(\yi_0(e))
    =|\phi_\Delta(a_1)|-\nu_p(\yi_1(e))
    =|\phi_\Delta(a_1)^{\yi_1(e)}|
    \]
    in $C_p^l$, and there exists an automorphism $\alpha_e\in \operatorname{Aut}(C_p^l)$ such that $\alpha_e(\phi(a_0)^{\pm\yi_0(e)})=\phi(a_1)^{\pm\yi_1(e)}$, where the signs of $\yi_0(e)$ and $\yi_1(e)$ are chosen to agree with the signs of the canonical inclusions of $\Gs(e)$ into $\Gs(d_0(e))$ and $\Gs(d_1(e)$, respectively. Thus one may check that $\phi\colon\Delta\cup{t} \to C_{p^l} \rtimes \Aut(C_{p^l})$ generated by
    \[
    \phi(g)=
    \begin{cases}
       \phi_\Delta(g) &\quad\text{if }g\in \Delta\\
       \alpha_e &\quad\text{if }g=t_e \\ 
     \end{cases}\]
    extends to a homomorphism with properties (i) and (ii). This completes the induction and demonstrates that a morphism $\phi$ with properties (i) and (ii) will exist for any generalised Baumslag--Solitar group. The result then follows by classification of procyclic groups \cite[Propositions 2.3.8 and 2.7.1]{Bible}.
\end{proof}
The converse of Lemma~\ref{Lem::GoodIfFullTop} holds in the case where $\Xi$ is not a cycle.
\begin{lemma}\label{lem:GoodonlyifFullTop}
    Let $\G = \PA$ be a generalised Baumslag--Solitar group over a reduced graph of groups $\ggx$ such that the graph $\Xi$ is not a cycle. Then $\G$ has separable cohomology only if $\G$ induces the full profinite topology on all vertex groups $\mathcal{G}(v)$ for $v\in V(\Xi)$.
\end{lemma}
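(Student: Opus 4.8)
The plan is to establish the contrapositive, so suppose throughout that $\G$ does not induce the full profinite topology on every vertex group. By Lemma~\ref{lem:aIffb} there is then a cycle $\Upsilon \subseteq \Xi$ with $|m(\Upsilon)| \neq |n(\Upsilon)|$. If $\Gh$ has torsion, then $\cd(\Gh) = \infty$ whereas $\cd(\G) = 2$ by Corollary~\ref{Cor::CohomDim}; the standard argument --- pass to an open $U \trianglelefteq_o \Gh$ with $H^n(U,\mathbb{F}_\ell) \neq 0$ for some prime $\ell$ and some $n \geq 3$ (possible since $\cd_\ell(\Gh) = \infty$), then note via Shapiro's lemma that $H^n(\Gh,\mathbb{F}_\ell[\Gh/U]) \neq 0$ while $H^n(\G,\mathbb{F}_\ell[\Gh/U]) \cong H^n(\iota^{-1}U,\mathbb{F}_\ell) = 0$ as $\cd(\iota^{-1}U) \leq 2$ --- shows that $\iota^\ast$ is not an isomorphism, so $\G$ is not cohomologically separable. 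Hence we may assume henceforth that $\Gh$ is torsion-free. Then Lemma~\ref{Lem::DifferenceIsTorsion} gives that $\G$ induces one and the same topology on every fibre of $\ggx$, and by torsion-freeness and the classification of procyclic groups each closure $\Gsc(x)$ ($x \in \Xi$) is isomorphic to $\Z_{\widehat\Sc}$ for one fixed set of primes $\Sc$.

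The next step is to pin down a prime outside $\Sc$. Since $|m(\Upsilon)| \neq |n(\Upsilon)|$ there is a prime $p$ with $\nu_p(n(\Upsilon)) \neq \nu_p(m(\Upsilon))$; orient $\Upsilon$ so that $\nu_p(n(\Upsilon)) > \nu_p(m(\Upsilon)) \geq 0$. For any $v_0 \in V(\Upsilon)$, chasing the relations around the cycle inside the subgroup $\PA[(\Gs|_{\Upsilon},\Upsilon)] \leq \G$ shows $a_{v_0}^{n(\Upsilon)}$ is conjugate to $a_{v_0}^{\pm m(\Upsilon)}$ in $\G$, so every finite quotient $\phi$ of $\G$ satisfies $\gcd(|\phi(a_{v_0})|,n(\Upsilon)) = \gcd(|\phi(a_{v_0})|,m(\Upsilon))$; comparing $p$-adic valuations and using $\nu_p(n(\Upsilon)) \neq \nu_p(m(\Upsilon))$ forces $\nu_p(|\phi(a_{v_0})|) < \max\{\nu_p(n(\Upsilon)),\nu_p(m(\Upsilon))\}$, a bound uniform in $\phi$. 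Thus the $p$-part of the procyclic group $\Gsc(v_0)$ is finite, so trivial as $\Gh$ is torsion-free, and therefore $p \notin \Sc$. The upshot is that it now suffices to produce a single finite $\G$-module $M$ of $p$-power order with $H^2(\G,M) \neq 0$: for such $M$ each $\Gsc(e)$ is a pro-$\Sc$ group with $|M|$ coprime to $\Sc$, so $H^1(\Gsc(e),M) = 0$ by Lemma~\ref{Lem::CoprimeCohomologyVanishes}, and since $\cd(\Gsc(v)) \leq 1$ the profinite Mayer--Vietoris sequence of Proposition~\ref{Prop::Mayer-Vietoris} yields $H^2(\Gh,M) = 0$; then $\iota^\ast \colon 0 = H^2(\Gh,M) \to H^2(\G,M) \neq 0$ is not onto and $\G$ is not cohomologically separable.

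To build such an $M$ I would split on $\operatorname{rk}_\Z H_1(\Xi)$, using that $\Xi$ is not a cycle. If $\operatorname{rk}_\Z H_1(\Xi) \geq 2$, take $M = \mathbb{F}_p$ with trivial $\G$-action; the Mayer--Vietoris sequence of Proposition~\ref{Prop::Mayer-Vietoris} identifies $H^2(\G,M)$ with the cokernel of a linear map $\mathbb{F}_p^{V(\Xi)} \to \mathbb{F}_p^{E(\Xi)}$, whose dimension is at least $|E(\Xi)| - |V(\Xi)| = \operatorname{rk}_\Z H_1(\Xi) - 1 \geq 1$. If $\operatorname{rk}_\Z H_1(\Xi) = 1$, then $\Xi$ is a single cycle with a non-empty forest attached, hence has a leaf $w$; its incident edge $f$ is a bridge, so by reducedness $d := [\Gs(w):\Gs(f)] \geq 2$. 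Pick a prime $q \mid d$ and a non-trivial element $g$ of order $q$ in some $GL_N(\mathbb{F}_p)$ (take $N$ the multiplicative order of $p$ modulo $q$ if $q \neq p$, in which case $g - \operatorname{id}$ is invertible, or $N = 2$ with $g$ a non-trivial unipotent if $q = p$), and set $M = \mathbb{F}_p^N$ with $a_w$ acting by $g$ and all other vertex generators and all stable letters acting trivially. Since $w$ is a leaf, the only relation of the canonical presentation involving $a_w$ is the one coming from $f$, and it is preserved because $q \mid d$ and --- $f$ being a bridge --- its stable letter is trivial in $\G$; moreover every edge group still acts trivially on $M$. A dimension count in the Mayer--Vietoris sequence (the edges contributing $N\,|E(\Xi)|$ and the vertices only $N\,|V(\Xi)| - \operatorname{rk}(g-\operatorname{id})$) then gives $\dim_{\mathbb{F}_p} H^2(\G,M) \geq \operatorname{rk}(g - \operatorname{id}) \geq 1$.

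I expect the main obstacle to be the case $\operatorname{rk}_\Z H_1(\Xi) = 1$: one must verify carefully that this permutation-type module is well defined on all of $\G$ (checking every edge relation, not merely the one at the leaf) and carry the Mayer--Vietoris dimension bookkeeping through so that the single extra dimension contributed by the leaf is not cancelled elsewhere; it is also worth confirming that a suitable order-$q$ element $g$ with $g - \operatorname{id}$ of positive rank indeed exists over $\mathbb{F}_p$. By contrast, the torsion dichotomy and the location of a prime outside $\Sc$ should be routine given Lemmas~\ref{Lem::DifferenceIsTorsion} and~\ref{Lem::CoprimeCohomologyVanishes} and the classification of procyclic groups.
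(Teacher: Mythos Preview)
Your proposal is correct and follows essentially the same route as the paper's proof: reduce to the torsion-free case, fix a prime outside the induced set $\Sc$, and then exhibit an $\mathbb{F}_p$-module with non-vanishing $H^2(\G,-)$ via a Mayer--Vietoris dimension count, splitting on whether $\operatorname{rk}_\Z H_1(\Xi)\geq 2$ or $=1$. The only cosmetic differences are that the paper simply notes some prime $q\notin\Sc$ exists (rather than locating it explicitly via the cycle relation as you do) and, in the rank-$1$ case, uses the permutation module $\mathbb{F}_q^{\yi_1(e_i)}$ with the leaf generator acting by a cyclic basis permutation in place of your $\mathbb{F}_p^N$ with an order-$q$ element of $GL_N(\mathbb{F}_p)$; the dimension bookkeeping is identical.
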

\begin{proof} 
    Assume that there exists $v\in V(\Xi)$ such that $\G$ does not induce the full profinite topology on $\mathcal{G}(v)$. If $\Xi$ is a tree then $\G$ induces the full topology on all of its vertex groups by Lemma~\ref{lem:aIffb}, so we may assume that this is not the case. If $\Gh$ has torsion then $\cd(\Gh) = \infty$ and $\G$ fails to have separable cohomology. Hence, we may assume that for all vertices $v\in V(\Xi)$, the closure $\Gsc(v)$ is torsion-free and $\G$ induces the same topology on all of its vertex groups (cf. Lemma~\ref{Lem::DifferenceIsTorsion}). It follows then from classification of procyclic groups \cite[Proposition 2.3.8 and Proposition 2.7.1]{Bible} that there exists some collection of primes $\mathcal{S}$ such that for all $v\in V(\Xi)$ we have that $\overline{\mathcal{G}}(v)\cong\prod_{p\in\mathcal{S}}\Z_p$, and that there exists some prime $q\notin\mathcal{S}$. We shall demonstrate that there is a $\G$-module $M$ whose order is a power of $q$, such that $H^2(\G,M) = 0$ but $H^2(\G,M) \neq 0$.

    First, we observe that $H^2(\Gh, M) = 0$ for any $\G$-module whose order is a power of $q$. Indeed, Proposition~\ref{Prop::Mayer-Vietoris} yields a long exact sequence, of which a portion reads
    \[
        \ldots\to\bigoplus_{e \in E(\Xi)} H^1(\Gsc(e),M)  \rightarrow  H^2(\Gh,M) \rightarrow \bigoplus_{v \in V(\Xi)} H^2(\Gsc(v),M)\to\ldots
    \]
    and we note that $H^1(\Gsc(e), M) = H^2(\Gsc(v), M) = 0$ for all vertices $v\in V(\Xi)$ and edges $e\in E(\Xi)$, via Lemma~\ref{Lem::CoprimeCohomologyVanishes} and the assumption that $q \notin \Sc$. Hence also $H^2(\Gh,M)\cong 0$ by exactness. 

    On the other hand, we shall construct a $\G$-module $M$ for which $H^2(\G, M) \neq 0$. We split the proof into two cases based on the complexity of $\Xi$. For the first case, assume that $\Xi$ contains a single cycle $\Upsilon$, and let $v_i$ be a leaf of $\Xi$ with $e_i$ the unique edge incident on $v_i$. Assume further that $e_i$ is oriented towards $v_i$. We argue here in a similar way to Lemma~\ref{Lem::FunnyModule}. Let $M$ be the $\mathbb{F}_q$-vector space $M = \mathbb{F}_q^{\yi_1(e_i)}$ and fix a basis $B$ for $M$. Let $\alpha\in \operatorname{Aut}(M)$ be the automorphism of $M$ induced by a cyclic permutation of $B$, and consider the homomorphism $\phi\colon\G\rightarrow\operatorname{Aut}(M)$ generated by
    \[
    \phi(a_j)=\begin{cases}
        \alpha, & v_j=v_i \\
        0 & \text{else},
    \end{cases}\]
    and $\phi(t) = 0$, where we use the generators of $\G$ as in (\ref{Eq::CanonicalPresentation}) and with vertices $v\in V(\Xi-\Upsilon)$ labelled in any way starting from $v_{s+1}$. We endow $M$ with a $\G$-module structure with action via $\phi$. Then all edge groups act trivially on $M$, so 
    \[
        \bigoplus_{e\in E(\Xi)}H^1(\Gs(e), M) \cong M^{|E(\Xi)|}
    \]
    using that if $M$ is a trivial $\Z$-module then $H^1(\Z, M)\cong M$. All vertex groups other than $\Gs(v_i)$ also act trivially on $M$, so it only remains to consider $H^1(\Gs(v_i), M)\cong M/(a_i-1)M$. In this case, $(a_i-1)M$ is a non-trivial subspace (having assumed $\yi_1(e_i)\neq 1$ as $\ggx$ is reduced), so $\dim_{\mathbb{F}_q}(H^1(\Gs(v_i), M)) < \dim_{\mathbb{F}_q}(M)$ and it follows that
    \begin{equation}\label{Eq::Sizes}
        \dim_{\mathbb{F}_q}\left(\bigoplus_{v\in V(\Xi)}H^1(\Gs(v), M)\right)<\dim_{\mathbb{F}_q} \left(\bigoplus_{e\in E(\Xi)}H^1(\Gs(e), M)\right)
    \end{equation}
    using that for a graph $\Xi$ containing exactly one cycle, the equation $|E(\Xi)|=|V(\Xi)|$ holds. Again,  Lemma~\ref{Prop::Mayer-Vietoris} yields the following section of a long exact sequence in abstract cohomology
    \[
        {\ldots\to\bigoplus_{v \in V(\Xi)} H^1(\mathcal{G}(v),M)} \xrightarrow{\hbar} {\bigoplus_{e \in E(\Xi)} H^1(\mathcal{G}(e),M)} \to
        H^2(\G,M)\to\ldots,
    \]
    so $\hbar$ cannot be a surjection by (\ref{Eq::Sizes}). It follows by exactness that $H^2(\G, M) \neq 0$ and $\G$ cannot not have separable cohomology.
    
    For the second case, assume that $\Xi$ contains more than one cylce, or equivalently, that $\operatorname{rk}_\Z H_1(\Xi) \geq 2$ holds. Let $M = \mathbb{F}_q$ with trivial $\G$-module structure. We have the same section of the long exact sequence in abstract cohomology, which simplifies in this case to
    \[
        \ldots\to {M^{|V(\Xi)|}} \xrightarrow{\hbar} {M^{|E(\Xi)|}} \to H^2(\G,M)\to\ldots
    \]
    using the fact that every edge and vertex group of the graph of groups description of $\G$ over $\Xi$ is a copy of $\Z$, and that the module $M$ is trivial. By assumption, $\operatorname{rk}_\Z H_1(\Xi) \geq 2$, so  $|V(\Xi)|<|E(\Xi)|$ and $\hbar$ cannot be surjective once again. Thus $H^2(\G, M) \neq 0$ again by exactness, and $\G$ cannot not have separable cohomology.
\end{proof}
\MTB*
\begin{proof} 
    The equivalence of (2a) and (2b) has been established in Lemma~\ref{lem:aIffb}. We already know from Theorem~\ref{Thm::MTALoopCase} that $\G$ has separable cohomology whenever (1) holds. On the other hand, (2b) yields the same conclusion via Lemma~\ref{Lem::GoodIfFullTop}.
    
    Conversely, suppose that $\G$ has separable cohomology. If the graph $\Xi$ is a cycle, then (1) holds via Theorem~\ref{Thm::MTA}. Otherwise, Lemma~\ref{lem:GoodonlyifFullTop} implies that (2b) holds.
\end{proof}
\printbibliography
\end{document}